\title{Restricted Irreducible Representations for the Non-graded Hamiltonian Algebra $H(2; (1,1); \Phi(1))$}
 \author{Horacio Guerra} 
\date{}
\begin{document}
\maketitle

\begin{abstract}
We classify the simple restricted modules for the minimal $p$-envelope of the non-graded, non-restricted Hamiltonian Lie algebra $H(2; (1,1); \Phi(1))$ over an algebraically closed field $k$ of characteristic $p \geq 5$. We also give the restrictions of these modules to a subalgebra isomorphic to the first Witt algebra, a result stated in [S. Herpel and D. Stewart, \emph{Selecta Mathematica} 22:2 (2016) 765--799] with an incomplete proof.
\end{abstract}

\section{Introduction}
Much work has gone into classifying the irreducible representations of modular Lie algebras and working out their dimensions, for example by Chang, Holmes, Koreshkov, Shen, Feldvoss, Siciliano and Weigel \cite{Feldvoss, Holmes, Holmes2, Koreshkov, Chang, Shen, Shen1}. However, almost all this work has been concentrated on those of restricted type. But most Cartan-type modular Lie algebras are in fact non-restricted. Hence there is much left to do.

This paper will focus on calculating dimensions of irreducible representations of a non-restricted Hamiltonian-type Lie algebra. We classify, then, the simple restricted modules for the Hamiltonian-type Lie algebra $H(2;(1,1);\Phi(1))$, more precisely for its minimal $p$-envelope $\widehat{H}$, and give dimension formulas for all of them. Moreover, we calculate the composition factors of all restricted induced modules. This completes the rank one and rank two picture\footnote{In the sense that it completes the description of the restricted modules for Hamiltonian algebras of absolute toral rank 1 and 2, see Section~\ref{prelim} for more details.}; the other non-restricted Hamiltonian algebra was only recently dealt with by Feldvoss, Siciliano and Weigel in \cite{Feldvoss}.


Apart from the intrinsic motivation to expand the understanding of the representation theory of modular Lie algebras to non-restricted Cartan-type Lie algebras, it turns out that such an understanding has played an important role in the study of maximal subalgebras of exceptional classical Lie algebras $\lie{g}$ over an algebraically closed field of good characteristic, for instance, in \cite{Stewart, Stewart2}.  In \cite{Stewart} the authors show that for such a Lie algebra $\lie{g}$, if it is simple, then any simple subalgebra $\lie{h}$ of $\lie{g}$ is either isomorphic to the first Witt algebra $W(1;1)$ or of classical type. This result relied (among many other things) on knowledge of the restrictions of the simple modules we classify to a subalgebra isomorphic to $W(1;1)$, but their argument was incomplete because the representation theory for $H(2;(1,1);\Phi(1))$ turned out to be more complicated than expected; for more details see Lemma~2.7, Lemma~2.9, and  the proof of Theorem~1.3 at the end of Section 4 in \cite{Stewart}.

Our main result, Theorem~\ref{mainresult}, gives a full description of the $p^2- p + 1$ isomorphism classes of simple restricted $\widehat{H}$-modules, and allows us to complete the arguments in \cite{Stewart}.

\section{Preliminaries and notation} \label{prelim}
Let $k$ be an algebraically closed field of positive characteristic $p \geq 5$. 

Put $\mathcal{A} = \ls a \in \Z^2 : 0 \leq a_i \leq p -1 \rs$.

The non-graded Hamiltonian algebra $ H \coloneqq H(2; (1,1); \Phi(1))$, of dimension $p^2$, can be realised as the subalgebra of (see \cite[Sec.~10.4]{Strade2} and \cite[Sec.~4.2]{Strade1} for explicit descriptions of the Hamiltonian algebras)\[ W(2; (1,1)) = \Der \lp k[X,Y]/\lp X^p, Y^p \rp \rp\] with basis
\[ \set{y^{(j-1)}\del_x -x^{(p-1)}y^{(j)}\del_y,x^{(i-1)}y^{(j)}\del_y-x^{(i)}y^{(j-1)}\del_x:1\leq i\leq p-1, 0\leq j\leq p-1 },\]
where $x^{(-1)}$ and $y^{(-1)}$ are understood to be zero, and $x$ and $y$ denote the images of $X$ and $Y$ in the truncated polynomial ring $k[X,Y]/(X^p, Y^p)$, respectively, using divided power notation, see \cite[Chap.~2]{Strade1}. For a general formula for commutators in $ W(n;(1, \ldots, 1))$, we refer the reader to the proof of Proposition~5.9 in Chapter 3 of \cite{Strade}.

 The Lie algebra $H$ is simple and its minimal $p$-envelope $ \widehat{H} \coloneqq H_{[p]}$ can be obtained by adding the element $x \del_x + y \del_y$, see \cite[Sec. 10. 4]{Strade2} for more details. As we noted in the introduction, classifying the restricted simple modules for $\widehat{H}$ completes the rank one and rank two picture, in the sense that it completes the description of the restricted simples for Hamiltonian algebras of absolute toral rank 1 and 2. The only Hamiltonian algebra of absolute toral rank 1, $H(2;(1,1))^{\paren{2}}$, was done by \cite{Koreshkov}; the absolute toral rank 2 Hamiltonian algebras are $H(2;(1,1), \Phi(\tau))^{\paren{1}}$, which was done by \cite{Feldvoss}, $H(4;(1,1,1,1))^{\paren{2}}$, which was done by \cite{Shen, Shen1}, together with certain corrections made in \cite{Holmes2}, $H(2;(1,2))^{\paren{2}}$, which was done by \cite{Yao}, and lastly the algebra we study in this paper, $H(2; (1,1); \Phi(1))$. For the classification of the absolute toral rank 1 and 2 simple Hamiltonian Lie algebras, see \cite[Sec. 10.6, p. 106]{Strade2}.

We will induce representations from a suitable subalgebra to all of $\widehat{H}$, which we will now define.

We define a restricted descending filtration $\lp \widehat{H}_{(n)}\rp_{n \in \Z}$ on $\widehat{H}$ from the natural grading \[W(2; (1,1)) =\bigoplus_{d=-1}^{2p-3}W(2;(1,1))_d,\] namely $\widehat{H}_{(n)} \coloneqq \widehat{H}\cap W(2;(1,1))_{(n)}$, where $W(2;(1,1))_{(n)} \coloneqq \bigoplus_{d\geq n}W(2;(1,1))_d$.

Then $\widehat{H}_{(0)}$ is a codimension $2$ subalgebra of $H$ having $\widehat{H}_{(1)}$ as an ideal. We lift representations from $\widehat{H}_0 \coloneqq \widehat{H}_{(0)}/\widehat{H}_{(1)} \cong \lie{gl}_2$\footnote{In \cite{Stewart}, the authors claim that $\widehat{H}_{0} \cong \sl_2$. However, we see that the quotient is four-dimensional, and that we have elements $x\del_x + \widehat{H}_{(1)}, y \del_y + \widehat{H}_{(1)}, x\del_y + \widehat{H}_{(1)}, \paren{y \del_x - x^{\paren{p-1}}y^{\paren{2}} \del_y} + \widehat{H}_{(1)}$ in the quotient satisfying the relations of $\gl_2$.} to $\widehat{H}_{(0)}$  via the canonical map, i.e., if $\rho$ is a representation and $\pi$ is the canonical projection
\[  \xymatrix{  \widehat{H}_{(0)} \ar[r]^-{\pi}  & \widehat{H}_0 \ar[r]^-{\rho}     & \lie{gl}(V)} \]
then $\rho \circ \pi$ is the desired representation. 

In this paper we will be considering only restricted representations, also known as $p$-representations,  i.e., those for which \[ \rho(x^{[p]}) = \rho(x)^p,\]
for all $x \in \widehat{H}$, see \cite[Chap.~2, Sec.~1]{Strade} for more details.

We will write $\lie{u}(\widehat{H})$ for the restricted universal enveloping algebra of $\widehat{H}$.

Given a restricted module $M$ for $\widehat{H}_{(0)}$ we will study the induced $\lie{u}(\widehat{H})$-module, i.e., the restricted $\widehat{H}$-module,
\[Z(M) \coloneqq \Ind_{\widehat{H}_{(0)}}^{\widehat{H}}(M, 0) \coloneqq \lie{u}(\widehat{H}) \otimes_{\lie{u}(\widehat{H}_{(0)})} M,\]
where $\widehat{H}$ acts on $Z(M)$ by the rule
\[ D \cdot (u \otimes m) = D\cdot u \otimes m,\]
for all $ u \in \lie{u}(\widehat{H}), m \in M, D \in \widehat{H}$, see \cite[Chap.~5, Sec.~6]{Strade} for more details.

Concerning the restricted structure, according to Strade in \cite[Sec.~10.4]{Strade2}, one has $D^{[p]} = D^p$ if $D \in \widehat{H}_{(0)}$. For such $D$, we have $D^p = D $ when $D = x \del_x$ or $D = y \del_y$. Otherwise $D^p = 0$ for single terms $x^{\paren{a}}y^{\paren{b}} \del_x$ and $x^{\paren{a}}y^{\paren{b}} \del_y$. For basis elements $D \notin \widehat{H}_{(0)}$, we have \[ \del_y^{[p]} = 0 \]
\[ \lp -\del_x + x^{(p-1)}y \del_y \rp^{[p]} = y \del_y .\]

Let $M$ be a restricted $\widehat{H}_0$-module, and hence a restricted  $\widehat{H}_{(0)}$-module, with $\widehat{H}_{(1)} \cdot M = 0$.



We seek a way to express elements of $Z(M)$ uniquely. Observe that \[\del_x' \coloneqq \del_x - x^{(p-1)}y \del_y \notin \widehat{H}_{(0)}.\] Also $\del_y \notin \widehat{H}_{(0)}$. These are linearly independent and in $\widehat{H}$. Hence, $k \langle \del_x',\del_y \rangle$ is a vector space complement of $\widehat{H}_{(0)}$ in $\widehat{H}$, i.e., $\widehat{H} = \widehat{H}_{(0)} \oplus k\langle \del_x',\del_y \rangle$. Thus, by the PBW theorem for $\lie{u}(\widehat{H})$, any $v \in Z(M)$ can be expressed uniquely in the form
\begin{equation} \label{basisform} v = \sum_{a \in \mathcal{A}} \lp \del_x' \del_y \rp^a \otimes m_a,
\end{equation} 
where $m_a \in M$ and $\lp \del_x' \del_y \rp^a \coloneqq \del_x'^{a_1} \del_y^{a_2}$.

Set $N = \widehat{H}_{(1)} \oplus k\bracket{x \del_y}$. This is a subalgebra of $\widehat{H}$ consisting of $p$-nilpotent elements. 

\begin{mydef}
Let $M$ be a $\mathcal{B}$-module, where $\mathcal{B} \coloneqq N \oplus T$ and $T \coloneqq k \langle x \del_x, y \del_y \rangle$\footnote{We have that $x \del_x \in \widehat{H}, y \del_y \in \widehat{H}$ because both $x \del_x + y \del_y$ and $y \del_y - x \del_x$ are in $\widehat{H}$.}.  Let $\lambda \in k^2$. Set 
\[ M({\lambda}) = \ls m \in M : x \del_x \cdot m = \lambda_{1} m , y \del_y \cdot m = \lambda_{2} m  \rs.\]

We call elements of $M(\lambda)$ \emph{weight vectors (of weight $\lambda$)}. If in addition $v \in M({\lambda})$ is nonzero and $N \cdot v =0$, then we say that $v$ is a \emph{maximal vector (of weight $\lambda$)}, following \cite{Holmes}.  
\end{mydef}

\begin{rem}
Every $\widehat{H}_0$-module $M$ is a $\mathcal{B}$-module, by inflation to $\widehat{H}_{\paren{0}}$ and then restriction to $\mathcal{B}$. Thus, it makes sense to talk about maximal vectors $v$ for $M$. In this setting, such maximal vectors are  equivalent to maximal vectors for $M$ in the classical sense, recalling that $\widehat{H}_{0} \cong \gl_2$, where $v$ is a maximal vector for $\gl_2$ if it is an eigenvector for $x\del_x$ and $y\del_y$ and is annihilated by $x\del_y$. This is because the algebra $\mathcal{B}$ in the quotient by $\widehat{H}_{\paren{1}}$ becomes \[ \mathcal{B}/\widehat{H}_{\paren{1}} \cong k\bracket{x\del_x, y\del_y, x\del_y}.\]

\end{rem}

\begin{rem}
Since we are looking at restricted modules, we have that if a restricted $\mathcal{B}$-module $M$ has a maximal vector of weight $\lambda$, then necessarily $\lambda \in \mathbb{F}_p^2$, where $\mathbb{F}_p$ is the prime subfield of our field $k$, see \cite[Sec.~2]{Holmes} for details.  
\end{rem}

The following results show the importance of maximal vectors and of induced modules. See \cite[Lem. 2.1]{Holmes}, for the proof of Lemma~\ref{maxvec}.

\begin{lem}\label{maxvec}
  Let $M$ be a finite-dimensional restricted $\widehat{H}$-module. The following are equivalent:
  \begin{enumerate}
    \item $M$ is non-zero and is generated (as a $\widehat{H}$-module) by each of its maximal vectors;
    \item $M$ is simple.
  \end{enumerate}
\end{lem}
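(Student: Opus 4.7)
The plan is to prove the two implications separately, with essentially all the content lying in $(1) \Rightarrow (2)$. For $(2) \Rightarrow (1)$, simplicity gives $M \neq 0$ directly, and for any nonzero $v \in M$ the submodule $\widehat{H} \cdot v$ is a nonzero submodule of the simple $M$, hence equals $M$; in particular, every maximal vector generates $M$.

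For $(1) \Rightarrow (2)$, I would take an arbitrary nonzero $\widehat{H}$-submodule $M' \subseteq M$ and show that $M'$ contains a maximal vector of $M$. Any such vector $v$ is a maximal vector of $M$ as well, since the defining conditions only involve the $\mathcal{B}$-action, so by hypothesis $v$ generates $M$; combined with $v \in M'$ this forces $M \subseteq M'$, whence $M' = M$. The real content is therefore the existence of a maximal vector inside every nonzero finite-dimensional restricted $\widehat{H}$-submodule $M'$, which I would obtain in two steps. First, since $N = \widehat{H}_{(1)} \oplus k\langle x\del_y\rangle$ consists of $p$-nilpotent elements and $M'$ is restricted, each element of $N$ acts as a nilpotent endomorphism on $M'$, so Engel's theorem applied to the image of $N$ in $\lie{gl}(M')$ shows that the space of $N$-invariants $(M')^N := \{m \in M' : N \cdot m = 0\}$ is nonzero. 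Second, $T$ normalises $N$: the inclusion $[T, \widehat{H}_{(1)}] \subseteq \widehat{H}_{(1)}$ is immediate from $T \subseteq \widehat{H}_{(0)}$ and $\widehat{H}_{(1)}$ being an ideal of $\widehat{H}_{(0)}$, while the brackets $[x\del_x, x\del_y] = x\del_y$ and $[y\del_y, x\del_y] = -x\del_y$ dispose of the remaining generator. Hence $(M')^N$ is $T$-stable, and since $T$ is a torus acting on a restricted module it diagonalises simultaneously, producing a common $T$-eigenvector inside $(M')^N$, which is by definition a maximal vector.

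The main obstacle is really just packaging these two classical tools correctly rather than any serious computation. Engel's theorem requires finite-dimensionality of $M'$ and nilpotent action of the elements of $N$, both of which hold here; the torus diagonalisation requires the restricted structure, which descends from $\widehat{H}$ to $\mathcal{B}$. The care required is in checking that $N$ and $T$ are both $[p]$-closed in $\widehat{H}$ so that the restricted enveloping algebra of $\mathcal{B}$ indeed acts on $M'$, and in observing that although $\mathcal{B} = N + T$ is only a proper subalgebra of $\widehat{H}$, the pair $(N, T)$ is still rich enough that its joint eigenvectors detect maximal vectors of the ambient module.
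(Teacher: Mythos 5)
Your proof is correct and is exactly the standard argument that the paper does not reproduce but outsources to \cite[Lem.~2.1]{Holmes}: the only real content is that every nonzero submodule contains a maximal vector, obtained by applying Engel's theorem to the $p$-nilpotent subalgebra $N$ and then simultaneously diagonalising the torus $T$ on the $T$-stable space of $N$-invariants. The normalisation checks ($[T,\widehat{H}_{(1)}]\subseteq\widehat{H}_{(1)}$ and $[x\del_x,x\del_y]=x\del_y$, $[y\del_y,x\del_y]=-x\del_y$) and the use of restrictedness to convert $[p]$-nilpotence into nilpotent action are all in order, so nothing is missing.
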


\begin{prop}
  Let $M$ be a finite-dimensional restricted $\widehat{H}$-module. Then $M$ has a maximal vector.
\end{prop}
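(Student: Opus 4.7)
The plan is to exploit the decomposition $\mathcal{B} = N \oplus T$: find a common zero for $N$ via Engel's theorem, then cut down to a $T$-weight vector inside that zero space.

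First, I would observe that the two generators $x\del_x$ and $y\del_y$ of $T$ are toral elements of $\widehat{H}$, since the excerpt notes $(x\del_x)^{[p]} = x\del_x$ and $(y\del_y)^{[p]} = y\del_y$. Because $M$ is restricted, the images of $x\del_x$ and $y\del_y$ in $\mathrm{End}(M)$ are commuting semisimple operators satisfying $\rho(D)^p = \rho(D)$, whence their eigenvalues lie in $\mathbb{F}_p$ and $M$ decomposes as
\[ M = \bigoplus_{\lambda \in \mathbb{F}_p^2} M(\lambda). \]

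Next, the excerpt states that $N = \widehat{H}_{(1)} \oplus k\langle x\del_y\rangle$ consists of $p$-nilpotent elements. Since $M$ is restricted, any $n \in N$ satisfies $n^{[p]^r} = 0$ for some $r$, hence $\rho(n)^{p^r} = 0$, so $\rho(N)$ is a Lie subalgebra of $\mathrm{gl}(M)$ consisting of nilpotent endomorphisms. Engel's theorem then produces a nonzero subspace
\[ M^{N} \coloneqq \{ m \in M : N \cdot m = 0 \} \neq 0. \]

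The last step is to intersect $M^N$ with a weight space. I would verify that $T$ normalises $N$: the bracket of $x\del_x$ or $y\del_y$ with any $x^{(a)}y^{(b)}\del_x$ or $x^{(a)}y^{(b)}\del_y$ rescales it, preserving both the filtration piece $\widehat{H}_{(1)}$ and the line $k\langle x\del_y\rangle$. Consequently $M^N$ is stable under $T$. Since the action of $T$ on $M^N$ remains semisimple with eigenvalues in $\mathbb{F}_p$, we can write
\[ M^N = \bigoplus_{\lambda \in \mathbb{F}_p^2} M^N \cap M(\lambda), \]
and any nonzero vector in some summand is a maximal vector of weight $\lambda$.

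The only delicate point is confirming that $T$ normalises $N$ and that all elements of $N$ really do act nilpotently; both follow immediately from the explicit basis description of $\widehat{H}_{(1)}$ in the natural grading together with the $p$-nilpotency assertion in the excerpt, so there is no substantive obstacle. The argument is essentially the standard "highest weight" construction adapted to the restricted setting, with Engel's theorem doing the heavy lifting.
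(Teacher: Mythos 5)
Your proof is correct, but it takes a different route from the paper. The paper first picks a simple restricted $\widehat{H}_{(0)}$-submodule $S$ of $M$, invokes the argument of Holmes to show that the $p$-nilpotent ideal $\widehat{H}_{(1)}$ annihilates $S$, and then reads off a maximal vector as a classical $\gl_2$-highest-weight vector of $S$ viewed as a simple $\widehat{H}_0 \cong \gl_2$-module. You instead treat all of $N = \widehat{H}_{(1)} \oplus k\langle x\del_y\rangle$ at once: $p$-nilpotency plus restrictedness makes $\rho(N)$ a Lie algebra of nilpotent endomorphisms, Engel gives $M^N \neq 0$, and since $T$ normalises $N$ (which does hold: $\widehat{H}_{(1)}$ is an ideal of $\widehat{H}_{(0)}$ and $[T, x\del_y] \subseteq k\langle x\del_y\rangle$) the subspace $M^N$ is $T$-stable and splits into $\mathbb{F}_p^2$-weight spaces under the commuting toral elements $x\del_x, y\del_y$. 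Every step checks out, including the point that $\rho(n)^{p^r} = \rho(n^{[p]^r}) = 0$ for restricted $\rho$. Your argument is more self-contained — it avoids the external reference and the detour through $\gl_2$ representation theory — whereas the paper's version has the mild advantage of exhibiting the maximal vector inside a simple $\gl_2$-constituent of $M$, which is the form in which maximal vectors are used later when applying Frobenius reciprocity to the induced modules $Z(S)$.
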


\begin{proof}
  Note that $M$ is a restricted $\widehat{H}_{\paren{0}}$-module. It has a simple restricted $\widehat{H}_{\paren{0}}$-module $S$. Now, since $\widehat{H}_{\paren{1}} \subseteq N$, the proof of Lemma~\ref{maxvec} in \cite[Lem. 2.1]{Holmes} shows that $\widehat{H}_{\paren{1}}$ acts trivially on $S$. Thus, we see that $S$ is a simple restricted $\widehat{H}_0 = \widehat{H}_{\paren{0}}/\widehat{H}_{\paren{1}}$-module. Thus, $S$ has a maximal vector $v$ of weight $\lambda$ as a restricted $\widehat{H}_0 \cong \gl_2$-module. We now claim that $v$ is a maximal vector for $\widehat{H}$. Indeed, it is non-zero, and it is a weight vector. Finally, we see that $\widehat{H}_{\paren{1}} \cdot v = 0$, and that $x\del_y \cdot v = 0$, the latter because $v$ is a maximal vector for $\widehat{H}_0$. Thus, $N  = \widehat{H}_{(1)} \oplus k\bracket{x \del_y}$ annihilates $v$, as required.
\end{proof}

\begin{prop}
  Let $M$ be a simple restricted $\widehat{H}$-module. Then $M$ is a homomorphic image of $Z(S)$ for some simple restricted $\widehat{H}_0$-module $S$, i.e., every simple restricted $\widehat{H}$-module $M$ is a quotient of some induced module $Z(S)$.
\end{prop}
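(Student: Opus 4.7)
The plan is to combine the output of the preceding proposition with Frobenius reciprocity for restricted induced modules. Rather than trying to make the explicit maximal vector produced there into a generator of some $Z(S)$ directly, I will instead locate a simple restricted $\widehat{H}_0$-submodule $S$ inside $M$ and then use the universal property of $Z(S)$ to obtain an $\widehat{H}$-module surjection $Z(S) \twoheadrightarrow M$.

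First, since the restricted enveloping algebra $\lie{u}(\widehat{H})$ is finite-dimensional, every simple restricted $\widehat{H}$-module is finite-dimensional; in particular so is $M$. Restricting scalars, $M$ becomes a finite-dimensional restricted $\widehat{H}_{(0)}$-module, and therefore contains a simple restricted $\widehat{H}_{(0)}$-submodule $S$. The argument in the preceding proof, borrowed from \cite[Lem.~2.1]{Holmes}, depends only on $\widehat{H}_{(1)} \subseteq N$ being a $p$-nilpotent ideal of $\widehat{H}_{(0)}$, so it applies verbatim to conclude that $\widehat{H}_{(1)} \cdot S = 0$. Thus $S$ descends to a simple restricted $\widehat{H}_0$-module.

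Next, I invoke Frobenius reciprocity for the restricted induced module $Z(S) = \lie{u}(\widehat{H}) \otimes_{\lie{u}(\widehat{H}_{(0)})} S$: the non-zero inclusion $S \hookrightarrow M$ of $\widehat{H}_{(0)}$-modules corresponds to a non-zero $\widehat{H}$-module homomorphism $\phi \colon Z(S) \to M$ given explicitly by $\phi(u \otimes s) = u \cdot s$. Since $M$ is simple, the image of $\phi$ is either $0$ or all of $M$; it is not $0$ because $\phi(1 \otimes s) = s$ for $s \in S$. Hence $\phi$ is surjective and $M$ is a quotient of $Z(S)$, as required.

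There is no real obstacle here: the substantive work has already been done in the previous proposition (producing a simple $\widehat{H}_{(0)}$-submodule on which the $p$-nilpotent ideal $\widehat{H}_{(1)}$ vanishes), and the remainder is a standard Frobenius-reciprocity step that goes through for any subalgebra pair in the restricted setting. The only point worth re-examining is that the argument for $\widehat{H}_{(1)}\cdot S=0$ really does transfer unchanged; but it does, since it uses only that $\widehat{H}_{(1)}$ is a $p$-nilpotent ideal inside $\widehat{H}_{(0)}$ and the simplicity of $S$.
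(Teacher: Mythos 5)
Your proposal is correct and follows essentially the same route as the paper: both locate a simple restricted $\widehat{H}_{(0)}$-submodule $S$ of $M$ on which $\widehat{H}_{(1)}$ acts trivially (so that $S$ is an $\widehat{H}_0$-module) and then apply Frobenius reciprocity, using simplicity of $M$ to conclude the nonzero map $Z(S)\to M$ is surjective. The paper merely phrases the first step in terms of the maximal vector produced by the preceding proposition, while you spell out the counit map explicitly; the content is the same.
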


\begin{proof}
  Since $M$ is finite-dimensional, we let $v \in M$ be a maximal vector of weight $\lambda$. Apply Frobenius reciprocity, where one takes $S$ to be a simple restricted $\gl_2$-submodule of weight $\lambda$, so that 
\[ \Hom_{\widehat{H}}(Z(S), M) \neq 0,\]
noting that any non-zero map must be surjective due to the simplicity of $M$.
\end{proof}
Certain weights will be important for us. They are the following: $\omega_0 = (-1,-1), \omega_1 = (0,-1), \omega_2 = (0,0)$, and all $\lambda \in \mathbb{F}_p^2$ with $\lambda_1 - \lambda_2 =1$. These weights we call the \emph{exceptional weights}.

 We will prove:
  \begin{thm}\label{mainresult}
    For $\lambda \in \mathbb{F}_p^2$, let $L_0(\lambda)$ be the simple restricted $\gl_2$-module of highest weight $\lambda$. Then 
    \begin{enumerate}
    \item There are $p^2-p +1$ distinct (up to isomorphism) simple restricted $\widehat{H}$-modules, represented by $\set{ L(\lambda) : \lambda \in \mathbb{F}_p^2, \lambda_1 - \lambda_2 \neq 1 \text{or } \lambda = \omega_1}$, where $L(0,0)$ is the trivial one-dimensional module, $L(-1,-1) \cong O(2;(1,1))/\paren{k \cdot 1}$ and is of dimension $p^2-1$, and $L(0,-1)$ is the other simple module of dimension $p^2 -1$.
\item  $L(\lambda)$ is the induced module from $L_0(\lambda)$, i.e. $L(\lambda) = Z(\lambda)$ if, and only if, $\lambda$ is \emph{not} exceptional.
\item If $\lambda$ is not exceptional, then $\dim_k  L(\lambda) = p^2 \dim_k L_0(\lambda)$, and if $\lambda$ is exceptional, it is either the trivial one-dimensional module, or has dimension $p^2 -1$ or $p^2$.
    \end{enumerate}
  \end{thm}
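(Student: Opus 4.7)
The argument goes through the induced modules $Z(L_0(\lambda))$ for $\lambda\in\mathbb{F}_p^2$. By the proposition just proved, every simple restricted $\widehat{H}$-module is the unique simple quotient of some $Z(L_0(\lambda))$, so for each such $\lambda$ I need to decide whether $Z(L_0(\lambda))$ is already simple, and if not, to identify its simple quotient and detect coincidences $L(\lambda)\cong L(\mu)$. The normal form~\eqref{basisform} makes this combinatorially tractable: $Z(L_0(\lambda))$ has dimension $p^2\dim_k L_0(\lambda)$, and the weights $(-1,0),(0,-1)$ of $\del_x',\del_y$ control its weight-space decomposition.

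The central criterion is that $Z(L_0(\lambda))$ is simple if and only if its only maximal vectors are scalar multiples of $1\otimes v_0$, where $v_0$ is the $\gl_2$-highest-weight vector of $L_0(\lambda)$. This holds because $1\otimes v_0$ generates $Z(L_0(\lambda))$ (the elements $\del_x',\del_y$ together with $\widehat{H}_{(0)}$ span $\widehat{H}$, and their iterated action on $1\otimes v_0$ produces the entire PBW basis), while any nonzero submodule contains a maximal vector by the preceding proposition. To exploit this criterion I would write an arbitrary weight vector $v=\sum_a (\del_x')^{a_1}\del_y^{a_2}\otimes m_a$ and convert $N\cdot v=0$, with $N=\widehat{H}_{(1)}\oplus k\langle x\del_y\rangle$, into a linear system on the $m_a\in L_0(\lambda)$: the commutators $[n,(\del_x')^{a_1}\del_y^{a_2}]$ for $n$ in a small generating subset of $N$ (notably $x\del_y$ together with a low-degree basis of $\widehat{H}_{(1)}$) reduce to normal form via divided-power arithmetic, yielding recursive relations whose compatibility conditions are polynomial in $\lambda_1,\lambda_2$. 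The expected outcome is that extra maximal vectors appear precisely on the exceptional set $\{\omega_0,\omega_1,\omega_2\}\cup\{\lambda:\lambda_1-\lambda_2=1\}$, settling the non-exceptional case of (ii)--(iii) at once.

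For each exceptional weight I would identify the simple quotient explicitly. For $\omega_0=(-1,-1)$, the truncated divided-power ring $O(2;(1,1))$ is a restricted $\widehat{H}$-module of dimension $p^2$ in which $x^{(p-1)}y^{(p-1)}$ is a maximal vector of weight $(-1,-1)$, producing a surjection $Z(L_0(\omega_0))\twoheadrightarrow O(2;(1,1))$, which is an isomorphism by dimension; its only proper nonzero submodule is $k\cdot 1$, so $L(-1,-1)=O(2;(1,1))/(k\cdot 1)$ has dimension $p^2-1$. For $\omega_2=(0,0)$ the trivial module is the simple quotient, and the extra maximal vector located above identifies the $(p^2-1)$-dimensional kernel with $L(-1,-1)$. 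For $\omega_1=(0,-1)$ a parallel analysis produces a second simple of dimension $p^2-1$. Finally, for the remaining $p-1$ weights on the line $\lambda_1-\lambda_2=1$ with $\lambda\neq\omega_1$, the submodule generated by the extra maximal vector yields a quotient isomorphic to some previously constructed $L(\mu)$, contributing no new isomorphism class. Tallying the three exceptional classes together with the $p^2-p-2$ non-exceptional ones gives $p^2-p+1$ distinct simples as in (i), and collecting dimensions gives (iii).

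The principal obstacle is the explicit computation of maximal vectors in $Z(L_0(\lambda))$: the non-graded correction $x^{(p-1)}y\del_y$ inside $\del_x'$ and the truncated divided-power arithmetic in $k[X,Y]/(X^p,Y^p)$ generate commutator contributions absent from the graded Hamiltonian setting, and the existence of extra maximal vectors hinges on the vanishing modulo $p$ of specific binomial coefficients that arise during normal-form reduction---this is what singles out the sporadic triple $\omega_0,\omega_1,\omega_2$ from the one-parameter family $\lambda_1-\lambda_2=1$. The remaining delicate point is then to verify that the submodule generated by each extra maximal vector matches the expected isomorphism class among the simples already constructed, rather than producing an unexpected intermediate composition factor.
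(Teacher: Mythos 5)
Your overall strategy---reduce to the induced modules $Z(L_0(\lambda))$, hunt for maximal vectors via the normal form, and treat the exceptional weights by hand---is the same as the paper's. But your central criterion is wrong in the direction you actually use it. You assert that $Z(L_0(\lambda))$ is simple \emph{if and only if} its only maximal vectors are scalar multiples of $1\otimes v_0$, and you then predict that ``extra maximal vectors appear precisely on the exceptional set.'' Only the ``if'' direction of your criterion holds. For every $a$ the vector $\del_y\otimes m$ is a maximal vector of $Z(a,a)$ of weight $(a,a-1)$, not proportional to $1\otimes m$; yet $Z(a,a)$ is simple for all $a\neq 0,-1$, because $\del_y\otimes m$ generates the whole module (one computes $Y\cdot(\del_y\otimes m)=-\del_x'\otimes m$ and then $\del_x'\cdot(\del_x'^{p-1}\otimes m)=-a(1\otimes m)$, which is nonzero exactly when $a\neq 0$). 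So extra maximal vectors occur on the entire diagonal $\lambda_1=\lambda_2$, not just at $\omega_0,\omega_2$, and the weights $(a,a)$ with $a\neq 0,-1$ are \emph{not} exceptional. The correct criterion (Lemma~\ref{maxvec}) is that a module is simple iff it is generated by \emph{each} of its maximal vectors; detecting an extra maximal vector is therefore only the first half of the work, and for each one found you must explicitly compute the submodule it generates and decide whether it is proper. That computation (carried out in the paper for $\del_y\otimes m$, for the vectors $v,w$ in the two-dimensional case, and for $\del_x'^{p-1}\del_y^{p-1}\otimes m$) is where the dichotomy between exceptional and non-exceptional weights actually lives; your plan as written would misclassify the generic diagonal weights.

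Two smaller gaps. First, the count $p^2-p+1$ requires knowing that the two $(p^2-1)$-dimensional simples $L(-1,-1)$ and $L(0,-1)$ are non-isomorphic; you call the latter ``a second simple of dimension $p^2-1$'' without argument, whereas one must check (as the paper does) that $L(-1,-1)$ has no maximal vector of weight $(0,-1)$, so that $Z(0,-1)$ cannot surject onto it. Second, for the $p-1$ weights on the line $\lambda_1-\lambda_2=1$ other than $\omega_1$ you assert that the quotient by the extra maximal vector's submodule ``yields a quotient isomorphic to some previously constructed $L(\mu)$''; identifying which $\mu$ (namely $L(\lambda_1,\lambda_1)$, via Frobenius reciprocity into $Z(\lambda_1,\lambda_1)$) and verifying no new intermediate factor appears is a genuine step, not a formality, and requires the explicit bases for $\widehat{H}\langle v\rangle$ and $\widehat{H}\langle w\rangle$ that the paper constructs.
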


\begin{mydef}\label{abbreviations}
For brevity we define the following
\begin{align*}
B &= x y \partial_y - x ^ { ( 2) } \partial_x \\
A &= y^{(2)}\del_y - xy \del_x \\
Y &= y \del_x - x^{(p-1)}y^{(2)} \del_y \\
C &= y^{(2)}\del_x - x^{(p-1)}y^{(3)} \del_y \\
D &= x^{\paren{2}} y \partial_y - x ^ { ( 3) } \partial_x \\
F &= x y^{\paren{p-1}} \del_y - x^{\paren{2}}y^{\paren{p-2}} \del_x \\
 r_a &=  a _ { 1} (\lambda ( a )_1  - \lambda ( a ) _ { 2}) + a _ { 1} a _ { 2} - \binom{a_1}{2} \\
 s_a &= a _{ 2} (\lambda ( a )_1  - \lambda ( a ) _ { 2} ) - a _ { 1} a _ { 2} + \binom{a_2}{2} \\
 t_a &= \binom{a_1}{2}(\lambda ( a )_2  - \lambda ( a ) _ { 1}) - \binom{a_ {1}}{2} a _ { 2} + \binom{a_1}{3}, 
\end{align*}
Furthermore, $x\del_y$ will also be referred to as $X$, especially when it is acting on $M$.
\end{mydef}

\subsection{Generating the subalgebra $N$}

To facilitate the arguments concerning maximal vectors in what follows, we will find a generating set for our subalgebra $N$. Indeed, we have the following:

\begin{prop}
We have
\[ N = \widehat{H} \bracket{x \del_y, x^{(p-1)} \del_y, A, C}\]
(as a Lie subalgebra) if $p \neq 5$. If $p = 5$
\[ N = \widehat{H} \bracket{x \del_y, x^{(p-1)} \del_y, A, C, J},\] where
$J \coloneqq x^{\paren{3}}y^{\paren{4}} \del_y - x^{\paren{4}}y^{\paren{3}} \del_x$.
\end{prop}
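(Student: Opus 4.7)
My plan is to show that the Lie subalgebra $N' \subseteq \widehat{H}$ generated by the listed elements coincides with $N$, by exhibiting a spanning set of $N$ inside $N'$. The containment $N' \subseteq N$ is immediate from filtration degrees: $x\del_y$ lies in $\widehat{H}_{(0)} \setminus \widehat{H}_{(1)}$ and provides the one-dimensional complement of $\widehat{H}_{(1)}$ in $N$, while $x^{(p-1)}\del_y$, $A$, $C$ (and $J$ when $p = 5$) all have summands of degree at least $1$ and so lie in $\widehat{H}_{(1)}$.

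For the reverse inclusion I would organize the basis of $\widehat{H}_{(1)}$ into the type-2 Hamiltonians $D_H(x^{(i)}y^{(j)}) := x^{(i-1)}y^{(j)}\del_y - x^{(i)}y^{(j-1)}\del_x$ with $1 \leq i \leq p-1$ and $i+j \geq 3$, together with the twisted type-1 elements $C_j := y^{(j-1)}\del_x - x^{(p-1)}y^{(j)}\del_y$ for $j \in \{0\} \cup \{3, 4, \ldots, p-1\}$ (so that $C_0 = -x^{(p-1)}\del_y$ and $C_3 = C$). The argument then rests on three bracket identities, each verified by a short direct calculation from $[f\del_i, g\del_j] = f\del_i(g)\del_j - g\del_j(f)\del_i$ in $W(2;(1,1))$:
\begin{align*}
[x\del_y, D_H(f)] &= D_H(x\del_y \cdot f), \\
[x^{(p-1)}\del_y, D_H(f)] &= D_H(x^{(p-1)}\del_y \cdot f), \\
[A, C_j] &= \binom{j+1}{2}\, C_{j+1}.
\end{align*}
The last identity is nonzero for $3 \leq j \leq p - 2$, so iterating it from $C_3$ yields the full chain $C_3, C_4, \ldots, C_{p-1}$; combining the first two with the Poisson identity $[D_H(f), D_H(g)] = D_H(\{f,g\})$ and routine bookkeeping on bidegrees then produces $D_H(x^{(i)}y^{(j)})$ for every admissible pair $(i, j)$ with $i + j \leq 2p - 3$.

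The main obstacle is reaching the top Hamiltonian $D_H(x^{(p-1)}y^{(p-1)})$, the unique basis element at filtration level $2p - 4$. A careful bracket calculation in the non-graded algebra, using the divided-power multiplication rules together with Lucas's congruence $\binom{p-1}{k} \equiv (-1)^k \pmod p$, gives
\[ [C_j, C_k] = 2(-1)^j\, D_H(x^{(p-1)}y^{(p-1)}) \quad \text{whenever } j + k = p. \]
For $p \geq 7$, the pair $(j, k) = (3, p - 3)$ lies in the admissible range $\{3, \ldots, p - 1\}$, placing $D_H(x^{(p-1)}y^{(p-1)})$ in $N'$. When $p = 5$, no such pair exists; separately verifying that no other iterated bracket of $\{x\del_y, x^{(p-1)}\del_y, A, C\}$ reaches the top filtration piece forces the inclusion of $J = D_H(x^{(4)}y^{(4)})$ as an additional generator. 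A dimension count against $\dim N = p^2 - 4$ then completes the argument.
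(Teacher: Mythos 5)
Your proposal is correct and follows essentially the same route as the paper: both build $N$ constructively from the generators, using the chain $[A,C_j]=\binom{j+1}{2}C_{j+1}$ to climb from $C_3$ to $C_{p-1}$, producing the Hamiltonian part $D_H(x^{(i)}y^{(j)})$ by bracketing, and reaching the top element $D_H(x^{(p-1)}y^{(p-1)})$ via $[C_3,C_{p-3}]$, which is exactly where $p=5$ fails and forces the extra generator $J$. Your packaging via the Poisson identity and Lucas's congruence is a tidier bookkeeping device than the paper's step-by-step computations, but the mathematical content is the same.
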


\begin{proof}
We proceed by induction. Put $S = \widehat{H}  \bracket{x \del_y, x^{(p-1)} \del_y, A, C}$.

First we will obtain all $y^{(j-1)}\del_x -x^{(p-1)}y^{(j)}\del_y$ for $j = 3, \ldots, p -1$. For $j = 3$, we observe that this is just the element $C$, which is already in $S$.

Now, we have
\[ [y^{(j-1)}\del_x -x^{(p-1)}y^{(j)}\del_y, A] = -\binom{j+1}{2} \paren{y^{(j)}\del_x -x^{(p-1)}y^{(j+1)}\del_y},\]
which is never zero since $j \neq p-1$. So we obtain all the desired elements by induction.

First we claim that $x^{(i)}y\del_y-x^{(i+1)}\del_x \in S$ and $x^{\paren{j}}\del_y \in S$ for $i = 1, \ldots, p-2, j = 1, \ldots, p-1$.

Again, proceed by induction. For $j = 1$, we already have $x \del_y \in S$ and for $i= 1$, we have $B \in S$ (see Definition~\ref{abbreviations}), which we obtain from $[x\del_y, A] = 2B$. For the inductive step, we have
\[ [x \del_y, x^{(i)}y\del_y-x^{(i+1)}\del_x] = (i+2) \paren{x^{(i+1)} \del_y},\]
and
\[ [x^{(i+1)}\del_y, A] = (i+2) \paren{x^{(i+1)}y\del_y-x^{(i+2)}\del_x}.\]

Hence, in step-wise fashion we get the terms we want up to the point we obtain the terms $x^{(p-3)}y\del_y-x^{(p-2)}\del_x$ and $x^{(p-3)}\del_y$. Taking the Lie bracket of the former with $x\del_y$, we obtain the term $(p-1)x^{(p-2)}\del_y$. By taking the Lie bracket of this term with $A$, we  obtain $x^{(p-2)}y\del_y-x^{(p-1)}\del_x$.  As $x^{\paren{p-1}} \del_y$ is in our set of generators, we have proved our claim.



We have \[[x^{\paren{i}} \del_y, y^{(j-1)}\del_x -x^{(p-1)}y^{(j)}\del_y] = -\paren{x^{(i-1)}y^{(j-1)}\del_y-x^{(i)}y^{(j-2)}\del_x },\]
so $x^{(i-1)}y^{(j-1)}\del_y-x^{(i)}y^{(j-2)}\del_x \in S$ for $i = 1, \ldots, p-1, j=3, \ldots, p-1$.



Hence, we are only missing the elements
\[ x^{(i-1)}y^{(p-1)}\del_y-x^{(i)}y^{(p-2)}\del_x ,\]
$1 \leq i \leq p-1$. We calculate
\[ [A, x^{(i-1)}y^{(j)}\del_y-x^{(i)}y^{(j-1)}\del_x = \gamma_{i,j}\paren{x^{(i-1)}y^{(j+1)}\del_y-x^{(i)}y^{(j)}\del_x},\]
where $\gamma_{i,j} = \binom{j+1}{2} - i(j+1)$. Taking $j = p -2$ in the above gives us the elements we need as $i$ runs from $1$ to $p-1$, as long as the coefficient $\gamma_{i,p-2} \neq 0$. However, $\gamma_{i, p-2} = 1 + i = 0$ when $i = p-1$. So we still need to find the last term
\[ x^{(p-2)}y^{(p-1)}\del_y-x^{(p-1)}y^{(p-2)}\del_x.\]

We calculate
\[ [y^{(p-4)}\del_x -x^{(p-1)}y^{(p-5)}\del_y, C] = 2 \paren{x^{(p-2)}y^{(p-1)}\del_y-x^{(p-1)}y^{(p-2)}\del_x}.\]
Finally, we note that if $p = 5$, $y^{(p-4)}\del_x -x^{(p-1)}y^{(p-5)}\del_y \notin N$, so we add the element $ J = x^{(p-2)}y^{(p-1)}\del_y-x^{(p-1)}y^{(p-2)}\del_x$ in characteristic 5.
\end{proof}

\begin{rem}
   Computer verification confirms that $N$ is not generated by $S$ alone when $p=5$.
\end{rem}

From the previous result we see that the Lie algebra $\widehat{H}$ is in fact generated by 
\[ \mathcal{G} \coloneqq \set{x \del_y, x^{(p-1)}\del_y, A, C, Y, \del_x', \del_y, x\del_x - y\del_y, x\del_x + y \del_y},\]
if $p > 5$ and by $\mathcal{G} \cup \set{J}$ if $p = 5$.

This gives us an effective way of proving that a particular set of elements obtained from a maximal vector $v$ in fact forms \emph{the whole submodule} generated by it. For, it is easy to prove that if for all $D \in \mathcal{G}$ and all $a_i$ in a $k$-linearly independent set $\mathcal{A} \subseteq \widehat{H}\bracket{v}$, $D \cdot a_i \in k\bracket{\mathcal{A}}$, then $k\langle \mathcal{A} \rangle$ is a $\widehat{H}$-module.

To handle the $p=5$ case with more ease, we have computed the action of $J$ on vectors in $Z(M)$.

By applying $J$ to both sides of the identity~(\ref{basisform}), we have for all $v \in Z(M)$ :
\begin{align*}
J \cdot v &=   1 \otimes X \cdot m_{\paren{2,4}} + 1 \otimes\paren { \lambda((3,3))_2 - \lambda((3,3)_1}m_{\paren{3,3}} -1 \otimes Y \cdot m_{\paren{4,2}} \\
&+ 3 \del_x' \otimes X \cdot m_{\paren{3,4}} + \paren {4 \paren{ \lambda((3,4))_2 - \lambda((3,4)_1} -1 }\del_y \otimes m_{\paren{3,4}} \\
&+ \paren{4 \paren { \lambda((4,3))_2 - \lambda((4,3)_1} +1}\del_x' \otimes m_{\paren{4,3}} -3 \del_y \otimes Y \cdot m_{\paren{4,3}} \\
&+ \paren { \lambda((4,4))_2 - \lambda((4,4)_1} \del_x'\del_y \otimes m_{\paren{4,4}} +  \del_x'^2 \otimes X\cdot m_{\paren{4,4}} - \del_y^2 \otimes Y \cdot m_{\paren{4,4}}.
\end{align*}

\section{The action of $\widehat{H}$ on induced modules}

\subsection{Calculating the actions} \label{Actions}

Throughout, let $v \in Z(M)$ be a maximal vector of weight $\lambda$, for $M$ a simple restricted $\widehat{H}_0$-module as above. We are now interested in the action of $\widehat{H}$ on $Z(M)$. 

A useful lemma (which follows from \cite[Chap.~1, Prop.~1.3 (4)]{Strade}) used throughout this paper is the following: 
\begin{lem}\label{AssociativeComm}
  Let $\mathscr{A}$ be an associative $k$-algebra. Suppose $A_0, \ldots, A_N \in \mathscr{A}$ and that for all $k \in \set{0, \ldots, N-1}$
 \[ A_k D = D A_k + A_{k + 1}\]
Then we have for $0 \leq n \leq N$
\[ A_0 D^n = \sum_{t = 0}^n \binom{n}{t} D^{n-t} A_t.\]
\end{lem}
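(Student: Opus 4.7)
The plan is to prove the identity by straightforward induction on $n$, using the single commutation relation $A_k D = D A_k + A_{k+1}$ to push powers of $D$ past $A_t$ one at a time, and then collecting terms via Pascal's rule.

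For the base case $n=0$ the identity reduces to $A_0 = \binom{0}{0} D^0 A_0$, which is trivial. For the inductive step, assuming the formula holds for some $n < N$, I would multiply both sides on the right by $D$ to obtain
\[ A_0 D^{n+1} = \sum_{t=0}^n \binom{n}{t} D^{n-t} A_t D. \]
Then I would apply the hypothesis $A_t D = D A_t + A_{t+1}$ (which is valid because $t \leq n \leq N-1$) inside each summand, splitting the result into two sums, the second of which I would reindex by $s = t+1$.

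Combining the two sums and collecting the coefficient of $D^{n+1-t} A_t$ for each $t$, the coefficients for $1 \leq t \leq n$ become $\binom{n}{t} + \binom{n}{t-1} = \binom{n+1}{t}$ by Pascal's identity, while the boundary terms $t = 0$ and $t = n+1$ carry the matching coefficients $\binom{n+1}{0} = 1$ and $\binom{n+1}{n+1} = 1$. This yields exactly the formula at level $n+1$.

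There is no real obstacle here: the argument is a textbook calculation, and the only thing to verify carefully is that the commutation relation is invoked only within its stated range (so we never need $A_{N+1}$), which is automatic since the induction stops at $n = N$. I would just present the inductive step in a single display, with the reindexing and Pascal's rule spelled out in one or two lines.
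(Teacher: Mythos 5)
Your induction is correct: the base case is trivial, the right-multiplication by $D$ followed by the commutation relation and the reindexing $s=t+1$ gives exactly Pascal's rule, and you rightly check that the relation is only invoked for $t \leq N-1$. The paper does not write out a proof at all --- it simply cites \cite[Chap.~1, Prop.~1.3~(4)]{Strade} --- and your argument is precisely the standard inductive computation that reference contains, so there is nothing further to reconcile.
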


\begin{lem}\label{dely}
We have the following identities in $\lie{u}(\widehat{H})$:
\begin{enumerate}
\item $  x \del_y \del_x' =   \del_x' x \del_y  -\del_y$

\item$  -\del_y \del_x' = -\del_x' \del_y +x^{(p-1)}\del_y $
\item $ \del_y^i \del_x' = \del_x' \del_y^i - i \del_y^{i-1}x^{(p-1)} \del_y $
\item $ y\del_y \del_y^i = \del_y^i y \del_y - i \del_y^i$.
\end{enumerate}
\end{lem}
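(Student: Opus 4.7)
All four identities live in $\lie{u}(\widehat{H})$, so $AB - BA$ in the associative algebra equals the Lie bracket $[A,B]$ in $\widehat{H}$. My plan is therefore to reduce everything to a handful of bracket computations in $\widehat{H}$ (using the commutator formula for $W(2;(1,1))$ recalled in Section~\ref{prelim}) and then to iterate with the help of Lemma~\ref{AssociativeComm}. Identities (1) and (2) are the base cases, while (3) and (4) will follow from them by induction.

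For (1), I rewrite $\del_x' = \del_x - x^{(p-1)}y\del_y$ and compute
$[x\del_y, \del_x'] = [x\del_y,\del_x] - [x\del_y, x^{(p-1)}y\del_y]$.
The first bracket equals $-\del_y$, and the second bracket involves $(x\del_y)(x^{(p-1)}y) = x\cdot x^{(p-1)}\del_y = p\, x^{(p)}\del_y = 0$ in the truncated polynomial ring, while $(x^{(p-1)}y\del_y)(x) = 0$. Hence $[x\del_y,\del_x'] = -\del_y$, which is (1). For (2), the same approach gives $[\del_y,\del_x'] = [\del_y,\del_x] - [\del_y, x^{(p-1)}y\del_y] = 0 - x^{(p-1)}\del_y = -x^{(p-1)}\del_y$, i.e.\ $\del_y\del_x' - \del_x'\del_y = -x^{(p-1)}\del_y$, which is (2) after sign change.

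For (3), I apply Lemma~\ref{AssociativeComm} with $A_0 = \del_x'$ and $D = \del_y$. By (2) we have $A_0 D - D A_0 = [\del_x',\del_y] = x^{(p-1)}\del_y$, so $A_1 = x^{(p-1)}\del_y$. Since $\del_y(x^{(p-1)}) = 0$ we get $[A_1,D] = 0$, so $A_2 = 0$ and all higher $A_k$ vanish. The lemma then yields $\del_x'\del_y^i = \del_y^i\del_x' + i\,\del_y^{i-1}x^{(p-1)}\del_y$, which rearranges to (3). Alternatively a direct induction on $i$ works, using that $\del_y$ and $x^{(p-1)}\del_y$ commute so that the new correction term simply adds to the previous one. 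For (4), I again apply Lemma~\ref{AssociativeComm}, this time with $A_0 = y\del_y$ and $D = \del_y$: from $[y\del_y,\del_y] = -\del_y$ I get $A_1 = -\del_y$, and $[A_1,D] = 0$ kills all subsequent terms, giving $y\del_y\cdot\del_y^i = \del_y^i\,y\del_y - i\,\del_y^i$.

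There is no real obstacle; the only thing to watch is the sign and the role of the truncation $x^{(p)} = 0$ in the computation of (1), which is what makes the $x^{(p-1)}y\del_y$ correction in $\del_x'$ invisible at the level of bracketing with $x\del_y$ but not with $\del_y$.
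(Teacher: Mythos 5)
Your proof is correct and follows essentially the same route as the paper: compute the brackets $[x\del_y,\del_x']$, $[\del_y,\del_x']$, $[y\del_y,\del_y]$ directly from $\del_x'=\del_x-x^{(p-1)}y\del_y$ and the truncation $x^{(p)}=0$, then iterate. The only (cosmetic) difference is that you package the iteration for (3) and (4) via Lemma~\ref{AssociativeComm}, where the paper runs the induction by hand; since $[x^{(p-1)}\del_y,\del_y]=0$ and $[\del_y,\del_y]=0$ kill all higher correction terms, the two arguments coincide.
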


\begin{proof}
  We use the identity $ab - ba = [a,b]$ in $\lie{u}(\widehat{H})$. Since $\del_x' = \del_x - x^{\paren{p-1}} \del_y$ it is easy to see that $[x \del_y, \del_x'] = -\del_y$ and $[-\del_y, \del_x'] = x^{\paren{p-1}} \del_y$. Setting $a =x\del_y$ and $b =\del_x'$, and $a = -\del_y$  and $b = \del_x'$ gives the first two identities, respectively.

 For the third identity, we proceed by induction. The base case $i =1$ is given by the second identity. Assume inductively that the identity holds for some $i$, we calculate
 \begin{align*}
 \del_y^{i+1} \del_x' &= \del_y \paren{\del_x' \del_y^i - i \del_y^{i-1}x^{(p-1)} \del_y} \\
   &= \del_x' \del_y^{i+1} - x^{\paren{p-1}} \del_y \del_y^{i} - i \del_y^{i}x^{(p-1)} \del_y \\
 &= \del_x' \del_y^{i+1}  - (i+1) \del_y^{i}x^{(p-1)} \del_y,
 \end{align*}
as required. The last identity holds since $[x^{(k)}\del_y, \del_y] = 0$ so that $x^{(k)} \del_y \del_y^{a_2} = \del_y^{a_2} x^{(k)} \del_y$.

Lastly, we proceed by induction again. The base case holds since we calculate that $[y \del_y, \del_y] = - \del_y$, so that $y \del_y \del_y = \del_y y \del_y - \del_y$. Assume inductively that the identity holds for some $i$, we calculate
 \begin{align*}
 y\del_y \del_y^{i+1}  &= \paren{\del_y^i y \del_y - i \del_y^i} \del_y \\
   &= \del_y^{i+1} y \del_y - \del_y^{i+1} - i \del_y^{i+1}\\
 &= \del_y^{i+1} y \del_y -(i+1) \del_y^{i+1}, 
 \end{align*}
as required.
\end{proof}

We will now give the calculation for the action of one of the elements of $\widehat{H}$ on $Z(M)$, and the rest is done similarly.

Since $x \del_y \in N$, observe $x \del_y \cdot v = 0$ for $v$ a maximal vector. 

\begin{lem}\label{xdely}
In fact we have:
\begin{align*} 0 = x \del_y \cdot v &= \sum_{a \in \mathcal{A}} \lp x \del_y  \del_x'^{a_1}\rp \del_y^{a_2} \otimes m_a \\
&= \sum_{a \in \mathcal{A}} \lp \del_x' \del_y \rp^a \otimes X \cdot m_a - \sum_{a \in \mathcal{A}} a_1 \del_x'^{a_1-1} \del_y^{a_2 +1} \otimes m_a.
\end{align*}
\end{lem}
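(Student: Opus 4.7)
The plan is to compute $x\del_y \cdot v$ by pushing $x\del_y$ past $\del_x'^{a_1}$ and then past $\del_y^{a_2}$, landing it finally on $m_a \in M$.

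The first step is to apply Lemma~\ref{AssociativeComm} with $A_0 = x\del_y$ and $D = \del_x'$, setting $A_{k+1} = [A_k, \del_x']$. Lemma~\ref{dely}(1) gives $A_1 = -\del_y$. An easy induction then yields $A_t = (-1)^{t-1} x^{(p-t+1)}\del_y$ for $2 \leq t \leq p-1$: each bracket $[x^{(k)}\del_y, \del_x']$ reduces to $[x^{(k)}\del_y, \del_x] = -x^{(k-1)}\del_y$, because the cross-term $[x^{(k)}\del_y, x^{(p-1)}y\del_y]$ vanishes using $x^{(k)}x^{(p-1)} = 0$ in the truncated divided-power algebra whenever $k \geq 1$. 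The lemma therefore gives
$$x\del_y\,\del_x'^{a_1} \;=\; \del_x'^{a_1}(x\del_y) \;-\; a_1\,\del_x'^{a_1-1}\del_y \;+\; \sum_{t=2}^{a_1}\binom{a_1}{t}\del_x'^{a_1-t} A_t.$$

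Next I would act each summand on $\del_y^{a_2} \otimes m_a$. The key observation is that every $A_t$ ($t=0$ included) is of the form $x^{(k)}\del_y$ (with no $y$-factor), so $[A_t,\del_y] = 0$ as a vector field and $A_t$ commutes with $\del_y^{a_2}$ in $\lie{u}(\widehat{H})$. For $t \geq 2$, the element $A_t$ is a scalar multiple of $x^{(p-t+1)}\del_y$ with $p - t + 1 \geq 2$, hence lies in $\widehat{H}_{(1)}$ and kills $m_a$ by hypothesis, so those terms vanish in $Z(M)$. The $t = 0$ term contributes $\del_x'^{a_1}\del_y^{a_2}\otimes X\cdot m_a$, and the $t=1$ term contributes $-a_1\,\del_x'^{a_1-1}\del_y^{a_2+1}\otimes m_a$. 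Summing over $a \in \mathcal{A}$ produces the second claimed equality, and the outer equality $x\del_y\cdot v = 0$ is just the defining property of a maximal vector, since $x\del_y \in N$.

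The main obstacle, and the reason a direct bracket computation is needed rather than a one-line appeal to Lemma~\ref{dely}, is verifying that every higher iterated commutator $A_t$ ($t \geq 2$) lands both in $\widehat{H}_{(1)}$ \emph{and} in the commutant of $\del_y$; this is what allows all higher-order terms to collapse. Once the pattern $A_t = (-1)^{t-1} x^{(p-t+1)}\del_y$ is established, the rest of the proof is assembly.
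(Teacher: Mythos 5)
Your proposal is correct and follows essentially the same route as the paper: both apply Lemma~\ref{AssociativeComm} with $A_0 = x\del_y$, $D = \del_x'$, identify $A_1 = -\del_y$ and the higher iterated brackets as multiples of $x^{(p-t+1)}\del_y$, commute everything past $\del_y^{a_2}$, and kill the $t \geq 2$ terms because they lie in $\widehat{H}_{(1)} \subseteq N$. The only slip is the sign in your closed form: since $A_2 = [-\del_y,\del_x'] = x^{(p-1)}\del_y$ and each subsequent bracket contributes a factor $-1$, one gets $A_t = (-1)^t x^{(p-t+1)}\del_y$ for $t \geq 2$ (as in the paper), not $(-1)^{t-1}x^{(p-t+1)}\del_y$; this is harmless here because those terms annihilate $m_a$ regardless of sign.
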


\begin{proof}
Apply $x\del_y$ to Equation~(\ref{basisform}). We proceed by commuting the $x \del_y$ past the $\del_x'$ terms.  By Lemma~\ref{dely} we have
\[  x \del_y \del_x' =   \del_x' x \del_y  -\del_y\]
\[  -\del_y \del_x' = -\del_x' \del_y +x^{(p-1)}\del_y . \]
In general for $a > 1$ we calculate that
\[ x^{(a)}\del_y \del_x' = \del_x' x^{(a)}\del_y - x^{(a-1)}\del_y.\]

Put $D = \del_x '$, $A_0 = x\del_y, A_1 = -\del_y$
and 
\[ A_k = (-1)^k x^{\paren{p-k+1}} \del_y\]
for $k \geq 2$.

Now one can verify that $[A_k, D] = A_{k+1}$, and thus
that the above satisfy the conditions of Lemma~\ref{AssociativeComm}. Consequently, we have :
\[ x\del_y \del_x'^{a_1} = A_0 D^{a_1} = \sum_{t = 0}^{a_1} \binom{a_1}{t} \del_x'^{a_1-t} A_t.\]

Recall that $x^{(k)} \del_y \del_y^{a_2} = \del_y^{a_2} x^{(k)} \del_y$. Hence, we have:
\[ A_0 \del_x'^{a_1}\del_y^{a_2} = \del_x'^{a_1}\del_y^{a_2} x\del_y -a_1\del_x'^{a_1-1}\del_y^{a_2+1} + \sum_{t = 2}^{a_1} \binom{a_1}{t} \del_x'^{a_1-t}\del_y^{a_2} A_t.\]

Looking at the $A_t$ terms above, we see that $2 \leq t \leq a_1 \leq p-1$, so they all have degree greater than or equal to 1. Thus they act trivially on $M$, as they lie inside our subalgebra $N$.

Thus, tensoring with $m_a$, we conclude, 
\[ x\del_y \del_x'^{a_1}\del_y^{a_2} \otimes m_a = \del_x'^{a_1}\del_y^{a_2} \otimes X \cdot m_a -a_1\del_x'^{a_1-1}\del_y^{a_2+1} \otimes m_a . \]

Summing over all indices we obtain the result, as required.
\end{proof}

Now, from this alone we can obtain the following information: if $a_1 = p -1$, we see that the term $ \lp \del_x' \del_y \rp^a \otimes X \cdot m_a $ cannot cancel with any other term, so \[ X \cdot m_a = 0\]
for all $a$ with $a_1 = p -1$. Likewise, if $a_2 = 0$ we see
\[ X \cdot m_a = 0\]
for all $a$ with $a_2 = 0$.

We continue studying the action of $\widehat{H}$ on $Z(M)$. We calculate:
\begin{align*}
 \lambda_1 v = x \del_x \cdot v &= \sum_{a \in \mathcal{A}} \lp \del_x' \del_y \rp^a \otimes \lp x \del_x \cdot m_a - a_1 m_a \rp \\
&= \sum_{a \in \mathcal{A}} \lp \del_x' \del_y \rp^a \otimes \lambda_1 m_a.  
\end{align*}

Since $[y\del_y, \del_x'] = 0$, we have $y \del _y \del_x'^{a_1} = \del_x'^{a_1} y \del_y$, so using the fourth identity in Lemma~\ref{dely}, we calculate:
\begin{align*}
 \lambda_2 v = y \del_y \cdot v &= \sum_{a \in \mathcal{A}} \lp \del_x' \del_y \rp^a \otimes \lp y \del_y \cdot m_a - a_2 m_a \rp \\
&= \sum_{a \in \mathcal{A}} \lp \del_x' \del_y \rp^a \otimes \lambda_2 m_a.  
\end{align*}

In light of this, we define for $a \in \mathcal{A}$ and $i = 1, 2$:
\[ \lambda(a)_i = \lambda_i + a_i, \]
so that $x \del_x \cdot m_a = \lambda(a)_1 m_a$ and $y \del_y \cdot m_a = \lambda(a)_2 m_a$.

Now, we have
\[0 = x^{(2)} \del_y \cdot v = \sum_{a \in \mathcal{A}}  \binom{a_1}{2}\del_x'^{a_1-2} \del_y ^{a_2+1} \otimes  m_a - \sum_{a \in \mathcal{A}} a_1 \del_x'^{a_1-1} \del_y^{a_2} \otimes X \cdot m_a. \]



We have \begin{align*}
0 =  B  \cdot v = \sum _ { a \in \mathcal{A} } r_a \del_x'^{a_1-1} \del_y ^{a_2} \otimes  m_a  - \sum_{a \in \mathcal{A}} a_2 \del_x'^{a_1} \del_y ^{a_2-1} \otimes  X \cdot m_a.
\end{align*}

From this we can immediately obtain that if $a_2 = p-1$, then the term
\[ r_a \del_x'^{a_1-1} \del_y ^{a_2} \otimes  m_a\]
cannot cancel with any other term, forcing either $m_a = 0$ or $ r_a = 0$.

Now we study the action of the element $A = y^{(2)}\del_y - xy \del_x$.

We have
\begin{align*}
0 =  A \cdot v = &\sum _{ \substack{a \in \mathcal{A} \\ a_1 \neq p-1}} s_a \del_x'^{a_1} \del_y ^{a_2-1} \otimes  m_a  + \sum_{ \substack{a \in \mathcal{A} \\ a_1 \neq p-1} } a_1 \del_x'^{a_1-1} \del_y ^{a_2} \otimes  Y \cdot m_a  \\ &+
\sum_{\substack{0 \leq a_2 \leq p-1 \\ a_1 = p-1}} s_a \del_x'^{p-1} \del_y ^{a_2-1} \otimes  m_a \\ &- \sum_{\substack{0 \leq a_2 \leq p-1 \\ a_1 = p-1}}  \del_x'^{p-2} \del_y ^{a_2} \otimes  Y \cdot m_a - \sum_{\substack{0 \leq a_2 \leq p-1 \\ a_1 = p-1}} \binom{a_2}{2}\del_y^{a_2-2} \otimes X \cdot m_a.
\end{align*}

Using that when $a_1 = p -1$, $X \cdot m_a = 0$, we can simplify the above, since the terms $\binom{a_2}{2} \del_y^{a_2-1} \otimes X \cdot m_a = 0$ for $a_1 = p-1$, to simply:
\begin{align*}
0 =  A \cdot v = &\sum _{ a \in \mathcal{A}} s_a \del_x'^{a_1} \del_y ^{a_2-1} \otimes  m_a \\ &+ \sum_{ a \in \mathcal{A} } a_1 \del_x'^{a_1-1} \del_y ^{a_2} \otimes  Y \cdot m_a.
\end{align*}

From this we can see that if $a_1 = p-1$, then the term
\[ s_a \del_x'^{a_1} \del_y ^{a_2-1} \otimes  m_a \]
cannot cancel so either $m_a = 0$ or $s_a = 0$.

Similarly, if $a_2 = p -1$, then the term $a_1 \del_x'^{a_1-1} \del_y ^{a_2} \otimes  Y \cdot m_a$ cannot cancel, forcing either $Y \cdot m_a = 0$ or $a_1 = 0$.

Now, we study the action of the element $C = y^{(2)}\del_x - x^{(p-1)}y^{(3)} \del_y$.
\begin{align*}
0 =  C \cdot v = &\sum _{ \substack{a \in \mathcal{A} \\ a_1 \neq p-1, p -2}}   \binom{a_2}{2}  \del_x'^{a_1 + 1} \del_y ^{a_2-2} \otimes  m_a - \sum_{ \substack{a \in \mathcal{A} \\ a_1 \neq p-1, p-2}} a_2 \del_x'^{a_1} \del_y ^{a_2 -1}  \otimes Y \cdot m_a \\
&\sum _{\substack{0 \leq a_2 \leq p-1 \\ a_1 = p-2}}\binom{a_2}{2}  \del_x'^{p-1} \del_y ^{a_2-2} \otimes  m_a - \sum_{\substack{0 \leq a_2 \leq p-1 \\ a_1 = p-2}} a_2 \del_x'^{p-2} \del_y ^{a_2 -1}  \otimes Y \cdot m_a \\
&+\sum_{\substack{0 \leq a_2 \leq p-1 \\ a_1 = p-2}} 2 \binom{ a_2 }{3} \del_y^{a_2 -3}  \otimes X \cdot m_a \\
& - \sum_{\substack{0 \leq a_2 \leq p-1 \\ a_1 = p-1}}a_2 \del_x'^{p-1} \del_y ^{a_2 -1}  \otimes Y \cdot m_a  -\sum_{\substack{0 \leq a_2 \leq p-1 \\ a_1 = p-1}} 2 \binom{ a_2 }{3} \del_x' \del_y^{a_2 -3}  \otimes X \cdot m_a \\
&+ \sum _{\substack{0 \leq a_2 \leq p-1 \\ a_1 = p-1}}   \lp \binom{a _ { 2}}{2} \paren{\lambda ( a )_2  - 2\lambda ( a ) _ { 1}  + a_2 -2 } - 2 \binom{a_2}{3} \rp \del_y ^{a_2-2} \otimes  m_a .
\end{align*}

Using again that for $a \in \mathcal{A}$ with $a_1 = p -1$, $X \cdot m_a = 0$, we can simplify the above to:
\begin{align*}
0 =  C \cdot v = &\sum _{ \substack{a \in \mathcal{A} \\ a_1 \neq p-1, p -2}}   \binom{a_2}{2}  \del_x'^{a_1 + 1} \del_y ^{a_2-2} \otimes  m_a - \sum_{ \substack{a \in \mathcal{A} \\ a_1 \neq p-1, p-2}} a_2 \del_x'^{a_1} \del_y ^{a_2 -1}  \otimes Y \cdot m_a \\
&+\sum _{\substack{0 \leq a_2 \leq p-1 \\ a_1 = p-2}}\binom{a_2}{2}  \del_x'^{p-1} \del_y ^{a_2-2} \otimes  m_a - \sum_{\substack{0 \leq a_2 \leq p-1 \\ a_1 = p-2}} a_2 \del_x'^{p-2} \del_y ^{a_2 -1}  \otimes Y \cdot m_a \\
&+\sum_{\substack{0 \leq a_2 \leq p-1 \\ a_1 = p-2}} 2 \binom{ a_2 }{3} \del_y^{a_2 -3}  \otimes X \cdot m_a  - \sum_{\substack{0 \leq a_2 \leq p-1 \\ a_1 = p-1}}a_2 \del_x'^{p-1} \del_y ^{a_2 -1}  \otimes Y \cdot m_a \\ 
 &+\sum _{\substack{0 \leq a_2 \leq p-1 \\ a_1 = p-1}}    \lp \binom{a _ { 2}}{2} \paren{\lambda ( a )_2  - 2\lambda ( a ) _ { 1}  + a_2 -2 } - 2 \binom{a_2}{3} \rp \del_y ^{a_2-2} \otimes  m_a.
\end{align*}
Consider the term
\[ -a_2 \del_x'^{p-1} \del_y^{a_2 -1} \otimes Y \cdot m_a\]
If $a_2 = p -1$, we see that this cannot cancel with any other term. Thus, we deduce that
\[ Y \cdot m_{(p-1,p-1)} = 0.\]

Likewise, consider the term
\[ -a_2 \del_x'^{p-2} \del_y^{a_2 -1} \otimes Y \cdot m_a.\]
If $a_2 = p -1$, we see that this cannot cancel with any other term. Thus, we deduce that
\[ Y \cdot m_{(p-2,p-1)} = 0.\]

Now, consider the term in the second sum
\[ -a_2 \del_x'^{a_1} \del_y^{a_2 -1} \otimes Y \cdot m_a,\]
where $a_1 = 0$. If $a_2 = p -1$, then no cancellation can occur with any other term, so we deduce that 
\[ Y \cdot m_{(0, p-1)} = 0.\]

We also have
\begin{align*}
0 =  D \cdot v = &\sum _ { a \in \mathcal{A} } t_a \del_x'^{a_1-2} \del_y ^{a_2} \otimes  m_a \\ &+ \sum_{a \in \mathcal{A}} a_1a_2 \del_x'^{a_1-1} \del_y ^{a_2-1} \otimes  X \cdot m_a.
\end{align*}
Here, we also see that if $a_2 = p -1$, then no cancellation can occur with any other terms, so either $m_a = 0$ or $t_a = 0$.



Finally we calculate the action of $F = x y^{\paren{p-1}} \del_y - x^{\paren{2}}y^{\paren{p-2}} \del_x$:
\begin{align*}
0 = F \cdot v = &-\sum _{\substack{0 \leq a_1 \leq p-1 \\ a_2 = p-3}}    \binom{a_1}{2} \del_x'^{a_1-2} \otimes Y \cdot m_a 
+\sum _{\substack{0 \leq a_1 \leq p-1 \\ a_2 = p-2}}   2  \binom{a _ {1}}{2}   \del_x'^{a_1 -2}\del_y \otimes Y \cdot m_a \\
&+\sum _{\substack{0 \leq a_1 \leq p-1 \\ a_2 = p-2}}    \lp a_1 \lp \lambda ( a )_2  - \lambda ( a ) _ { 1} \rp  +  \binom{a_1}{2} \rp \del_x'^{a_1 -1} \otimes  m_a \\
&+\sum _{\substack{0 \leq a_1 \leq p-1 \\ a_2 = p-1}}  \del_x'^{a_1      } \otimes X \cdot m_a -\sum _{\substack{0 \leq a_1 \leq p-1 \\ a_2 = p-1}}  \binom{a_1}{2}\del_x'^{a_1 -2  } \del_y^2 \otimes Y \cdot m_a  \\
&+\sum _{\substack{0 \leq a_1 \leq p-1 \\ a_2 = p-1}}    \lp a_1 \lp \lambda ( a )_1  - \lambda ( a ) _ { 2}  -1 \rp - \binom{a_1}{2} \rp \del_x'^{a_1 -1} \del_y \otimes  m_a 
\end{align*}

From this we can see that if $m_{\omega_0} \neq 0$, then $\lambda(a)_1 - \lambda(a)_2 =  (a_1 + 1)/2 = 0$.

We also have:
\begin{align*}
0 = x^{(p-1)} \del_y \cdot v = &-\sum _{\substack{0 \leq a_2 \leq p-1 \\ a_1 = p-2}}   \del_y^{a_2} \otimes X \cdot m_a  \\
&+\sum_{\substack{0 \leq a_2 \leq p-1 \\ a_1 = p-1}}   \del_x' \del_y^{a_2} \otimes X \cdot m_a + \sum_{\substack{0 \leq a_2 \leq p-1 \\ a_1 = p-1}} \del_y^{a_2 +1} \otimes m_a \\
\end{align*}

From this we can also confirm that if $m_{\omega_0} \neq 0$, then $\lambda(a)_1 - \lambda(a)_2 = 0$.

Later on we will need to have a formula for the action of $Y$ on arbitrary vectors $v \in Z(M)$. We have
\begin{align*}
Y \cdot v = &\sum _{ \substack{a \in \mathcal{A} \\ a_1 \neq p-1, p-2}} \del_x'^{a_1} \del_y ^{a_2} \otimes Y \cdot m_a  - \sum_{ \substack{a \in \mathcal{A} \\ a_1 \neq p-1, p-2 } } a_2 \del_x'^{a_1+1} \del_y ^{a_2-1} \otimes  m_a  \\ &+
\sum_{\substack{0 \leq a_2 \leq p-1 \\ a_1 = p-2}}  \del_x'^{a_1} \del_y ^{a_2} \otimes Y \cdot m_a  - \sum_{ \substack{0 \leq a_2 \leq p-1 \\ a_1 = p-2}} a_2 \del_x'^{a_1+1} \del_y ^{a_2-1} \otimes  m_a  \\ &- \sum_{\substack{0 \leq a_2 \leq p-1 \\ a_1 = p-2}} \binom{a_2}{2}\del_y^{a_2 -2} \otimes X \cdot m_a \\
 &+
\sum_{\substack{0 \leq a_2 \leq p-1 \\ a_1 = p-1}}  \del_x'^{a_1} \del_y ^{a_2} \otimes Y \cdot m_a  + \sum_{ \substack{0 \leq a_2 \leq p-1 \\ a_1 = p-1}} w_a  \del_y ^{a_2-1} \otimes  m_a  \\ &+ \sum_{\substack{0 \leq a_2 \leq p-1 \\ a_1 = p-1}} \binom{a_2}{2} \del_x'\del_y^{a_2 -2} \otimes X \cdot m_a,
\end{align*}
where 
\[ w_a \coloneqq a_2 \lambda(a)_1 - \binom{a_2}{2}\]

We lastly state the formula for the action of $\del_y$ on vectors in $Z(M)$. This will become useful when checking that a set of $k$-linearly independent vectors does form a $\widehat{H}$-submodule.

We have for $v \in Z(M)$:
\begin{align}\label{delyaction}
 \del_y \cdot v = &\sum _{\substack{ 0 \leq a_2 \leq p-1\\ a_1 \neq p-1 }}   \del_x'^{a_1}\del_y^{a_2+1} \otimes m_a  
+\sum_{\substack{0 \leq a_2 \leq p-1 \\ a_1 = p-1}}   \del_x'^{a_1}\del_y^{a_2+1} \otimes m_a \\
&+\sum_{\substack{0 \leq a_2 \leq p-1 \\ a_1 = p-1}} \del_y^{a_2} \otimes X \cdot m_a. \nonumber
\end{align}

Before we move on, we summarise the information we extracted throughout this section for ease of reference. 

We proved the following:

\begin{prop}\label{info}
  Let $M$ and $Z(M)$ be as above and $v$ a maximal vector. Then we have
  \begin{enumerate}
  \item $X \cdot m_a = 0$ for all $a$ with $a_1 = p -1 $ or $a_2 = 0$;
  \item $m_a = 0$ or $r_a = 0$ for all $a$ with $a_2 = p - 1$;
  \item $m_a = 0$ or $s_a = 0$ for all $a$ with $a_1 = p - 1$;
  \item $Y \cdot m_a = 0$ for all $a$ with $a_2 = p-1$;
  \item $t_a = 0$ or $m_a = 0$ for all $ a$ with $a_2 = p -1$;
  \end{enumerate}
\end{prop}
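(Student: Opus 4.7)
The statement aggregates relations that were extracted piecemeal in the preceding subsection, and the uniform strategy is to exploit the PBW uniqueness of the expansion~(\ref{basisform}). Since $v$ is a maximal vector, every $D \in N$ satisfies $D \cdot v = 0$; each such identity is displayed as a sum of PBW monomials $\del_x'^{b_1}\del_y^{b_2}\otimes(\cdot)$, and whenever some summand has an index $(b_1,b_2)$ that no other summand can produce (because the required exponent would exceed $p-1$ or be negative), its coefficient must vanish.

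The plan is to read off each claim from the appropriate displayed identity. For (1), I use Lemma~\ref{xdely}: the case $a_1 = p-1$ isolates the monomial $\del_x'^{p-1}\del_y^{a_2}$ (the companion sum would need $a_1' = p$), and the case $a_2 = 0$ isolates $\del_x'^{a_1}$ with no $\del_y$ factor (the companion sum carries $\del_y^{a_2+1}$, of exponent at least $1$). Claims (2), (3), and (5) follow by the same lone-monomial principle applied to the displayed expansions of $B \cdot v$, $A \cdot v$, and $D \cdot v$: the stated boundary index forces the $r_a$-, $s_a$-, or $t_a$-summand to stand alone, whence the product $r_a m_a$ (resp.\ $s_a m_a$, $t_a m_a$) vanishes and the disjunction follows.

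Claim (4) is the only one needing two generators. From the expansion of $A \cdot v = 0$ I first obtain $Y \cdot m_a = 0$ for every $a = (a_1, p-1)$ with $a_1 \neq 0$ by isolating the $Y\cdot m_a$-coefficient of $\del_x'^{a_1-1}\del_y^{p-1}$. The case $a_1 = 0$ escapes this argument (the scalar $a_1$ itself vanishes), so I turn to the simplified expansion of $C \cdot v = 0$, where, after using~(1) to kill the $X$-contributions at $a_1 = p-1$, the $Y\cdot m_{(0,p-1)}$-coefficient of $\del_y^{p-2}$ becomes an isolated term and must vanish. No conceptual obstacle arises; the only subtlety is the bookkeeping of which PBW monomials each sum in $D \cdot v$ can produce, together with the careful use of the already-derived (1) to prune the $C \cdot v = 0$ expansion before the lone-monomial argument is applied.
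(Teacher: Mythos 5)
Your proposal is correct and follows essentially the same route as the paper: the paper's "proof" of this proposition is precisely the collection of lone-monomial observations made after each displayed expansion in Section~\ref{Actions} ($x\del_y$, $B$, $A$, $D$, and $C$ acting on $v$), including the two-generator argument for claim (4), where $A\cdot v=0$ handles $a_1\neq 0$ and the $C\cdot v=0$ expansion, pruned using claim (1), handles $a_1=0$. Nothing is missing.
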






\subsection{Using the $\sl_2$-module structure}
\label{sec:sl2}

Recall that  $M$ is a simple restricted $\widehat{H}_{0} \cong \gl_2$-module. Thus, we can view $M$ as a restricted $\sl_2$-module, by restriction. In fact in the quotient $\widehat{H}_{\paren{0}}/\widehat{H}_{\paren{1}}$ we have the $\sl_2$-triple with representatives
\[ k \langle X = x \del_y, H \coloneqq x \del_x - y\del_y,  Y = y \del_x - x^{(p-1)}y^{(2)}\del_y \rangle \cong \lie{sl}_2,\]
as one can verify that
\[ [H, X] = 2X , \, \, [H, Y] = -2Y, \, \,\text{and } [X, Y] = H .\]

First recall some of the basic results concerning $\lie{sl}_2$-modules.

\begin{prop}
  Let $N$ be an $\lie{sl}_2$-module and let $m \in N_{\alpha}$, where
\[ N_{\alpha} \coloneqq \ls m \in N : H \cdot m = \alpha m \rs, \]
noting that this is non-zero for some scalar $\alpha$, as $k$ is algebraically closed. Then we have
\begin{enumerate}
\item $X \cdot m \in N_{\alpha + 2}$;
\item $H \cdot m \in N_{\alpha}$;
\item $Y \cdot m \in N_{\alpha -2}$.
\end{enumerate}
\end{prop}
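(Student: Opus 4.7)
The plan is to verify each claim by a direct computation using the commutation relations $[H,X] = 2X$, $[H,Y] = -2Y$ that define the $\mathfrak{sl}_2$-triple, together with the assumption $H \cdot m = \alpha m$. This is a standard weight-shift argument, so no conceptual obstacle is anticipated; the content is essentially a one-line calculation for each part.

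First I would handle part (1). The idea is to compute $H \cdot (X \cdot m)$ inside the universal enveloping algebra by rewriting $HX$ as $XH + [H,X] = XH + 2X$. Then
\[
H \cdot (X \cdot m) = (XH) \cdot m + 2 X \cdot m = X \cdot (\alpha m) + 2 X \cdot m = (\alpha + 2)\, X \cdot m,
\]
which shows $X \cdot m \in N_{\alpha + 2}$.

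Part (2) is immediate: by definition of $N_\alpha$, we have $H \cdot m = \alpha m \in N_\alpha$, so $H \cdot m$ is itself an $H$-eigenvector of eigenvalue $\alpha$.

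Part (3) proceeds in exactly the same manner as part (1), but using $[H,Y] = -2Y$. Namely, $HY = YH + [H,Y] = YH - 2Y$, so
\[
H \cdot (Y \cdot m) = (YH) \cdot m - 2 Y \cdot m = \alpha\, Y \cdot m - 2 Y \cdot m = (\alpha - 2)\, Y \cdot m,
\]
giving $Y \cdot m \in N_{\alpha - 2}$. The main (and essentially only) step is recognising that each bracket identity translates directly into an eigenvalue shift, so there is no real obstacle beyond correctly applying the relations.
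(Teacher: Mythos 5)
Your proof is correct and is the standard weight-shift argument via the commutation relations $[H,X]=2X$ and $[H,Y]=-2Y$; the paper states this proposition without proof as a well-known fact about $\lie{sl}_2$-modules, and your computation is exactly the argument one would supply.
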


Also, using an inductive argument, we obtain the following well-known lemma:
\begin{lem}
  For $m \in N_{\alpha}$ such that $Y \cdot m = 0$, we have 
\[ (YX^i) \cdot m = i(-\alpha -i + 1) X^{i-1} \cdot m. \]
\end{lem}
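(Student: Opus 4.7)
The plan is to proceed by induction on $i$. For the base case $i = 1$, I compute $YX \cdot m = XY \cdot m + [Y,X] \cdot m = 0 - H \cdot m = -\alpha \, m$, using that $Y \cdot m = 0$ and $[X,Y] = H$; this agrees with the formula at $i = 1$, which reads $1 \cdot (-\alpha - 1 + 1) X^0 \cdot m = -\alpha \, m$.

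For the inductive step, I would use the commutation identity $YX = XY - H$ in $\lie{u}(\lie{sl}_2)$, which follows from $[X,Y] = H$, to write $YX^{i+1} = XYX^i - HX^i$. Since $X$ raises the $H$-weight by $2$, one has $HX^i \cdot m = (\alpha + 2i) X^i \cdot m$. Combining this with the inductive hypothesis $YX^i \cdot m = i(-\alpha - i + 1) X^{i-1} \cdot m$ yields
\[ YX^{i+1} \cdot m = \bigl[ i(-\alpha - i + 1) - (\alpha + 2i) \bigr] X^i \cdot m. \]
A short simplification shows the coefficient equals $-(i+1)(\alpha + i) = (i+1)\bigl(-\alpha - (i+1) + 1\bigr)$, matching the claimed formula at level $i+1$.

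The main obstacle is purely sign bookkeeping: one must be careful with the paper's convention $[X,Y] = H$ (so that $YX = XY - H$ rather than $YX = XY + H$) and with the resulting formula $HX^i \cdot m = (\alpha + 2i) X^i \cdot m$. Once these conventions are fixed, the induction is completely routine and the rest is elementary algebra.
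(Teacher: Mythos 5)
Your induction is correct: the base case, the identity $YX = XY - H$ (consistent with the paper's convention $[X,Y]=H$), the weight shift $HX^i\cdot m = (\alpha+2i)X^i\cdot m$, and the coefficient simplification $i(-\alpha-i+1)-(\alpha+2i) = -(i+1)(\alpha+i)$ all check out. The paper states this lemma as well-known and merely remarks that it follows "using an inductive argument," so your proof supplies exactly the intended argument.
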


Now, we know that simple restricted $\lie{gl}_2$-modules are always simple after restriction to $\lie{sl}_2$. Thus we have a decomposition of our simple restricted $\sl_2$-module $M$ into its $H$-eigenspaces, with each eigenspace one-dimensional:
\[ M = M_{-n} \oplus M_{-n + 2} \oplus \cdots \oplus M_{n}, \]
where $n + 1$ is the dimension of $M$.

Therefore, pick an eigenbasis $\ls v_{-n}, v_{-n + 2}, \ldots, v_{n}\rs$ for $M$ such that
\[ X \cdot v_{\alpha} = v_{\alpha + 2},\]
for all eigenvalues $\alpha$ not equal to $n$.

Using our lemma and this basis we have that 
\[ Y \cdot v_{-n+2i} = i (n - i + 1)v_{-n + 2i -2} \]
for all $i \in \set{0, \ldots, n}$.

We restate the information we already had in Proposition~\ref{info} in these new terms:

\begin{prop}
   Let $M$ and $Z(M)$ be as above and $v$ a maximal vector. Then we have
  \begin{enumerate}
  \item $ m_a = 0$ or $m_a = \beta v_{n}$ for all $a$ with $a_1 = p -1 $ or $a_2 = 0$;
  \item $m_a = 0$ or $r_a = 0$ for all $a$ with $a_2 = p - 1$;
  \item $m_a = 0$ or $s_a = 0$ for all $a$ with $a_1 = p - 1$;
  \item $ m_a = 0$ or $m_a = \tau v_{-n}$ for all $a$ with $a_2 = p-1$;
  \item $t_a = 0$ or $m_a = 0$ for all $ a$ with $a_2 = p -1$;
  \end{enumerate}
\end{prop}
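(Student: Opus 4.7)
The plan is to observe that items (2), (3), and (5) are verbatim copies of the corresponding items of Proposition~\ref{info} and require no new argument; the only work is to rephrase the vanishing conditions $X \cdot m_a = 0$ and $Y \cdot m_a = 0$ from items (1) and (4) of Proposition~\ref{info} in terms of the eigenbasis $\set{v_{-n}, v_{-n+2}, \ldots, v_n}$ of $M$ introduced just above.

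For item (1), I would identify the kernel of the action of $X$ on $M$. From the defining property $X \cdot v_\alpha = v_{\alpha+2}$ for $\alpha \neq n$, together with the fact that $v_n$ is annihilated by $X$ (as it is the top $H$-weight vector of the simple restricted $\lie{sl}_2$-module $M$), and the linear independence of the eigenbasis, it follows that the kernel of $X$ on $M$ equals $k \cdot v_n$. Combined with item (1) of Proposition~\ref{info}, this gives $m_a \in k \cdot v_n$, which is the desired conclusion.

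For item (4), I would apply the preceding lemma, which says $Y \cdot v_{-n+2i} = i(n - i + 1) v_{-n + 2i - 2}$. Since $M$ is a \emph{restricted} simple $\lie{sl}_2$-module, its dimension $n+1$ is at most $p$, so $n \leq p - 1$; hence for $1 \leq i \leq n$ both $i$ and $n - i + 1$ belong to $\set{1, \ldots, p-1}$, and the coefficient $i(n - i + 1)$ is a nonzero element of $\mathbb{F}_p$. Therefore the kernel of $Y$ on $M$ equals $k \cdot v_{-n}$, and item (4) of Proposition~\ref{info} translates into the asserted form $m_a = \tau v_{-n}$.

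The only point requiring any care is the restricted-simplicity bound $n \leq p - 1$ used in item (4) to ensure $i(n - i + 1) \not\equiv 0 \pmod{p}$; without it, the kernel of $Y$ on $M$ could be larger than one-dimensional and the translation would fail. No other obstacle arises, since everything reduces to the well-known structure of a simple restricted $\lie{sl}_2$-module.
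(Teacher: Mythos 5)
Your proposal is correct and matches the paper's (implicit) argument: the paper presents this proposition as a direct restatement of Proposition~\ref{info}, and the only content is exactly the translation you carry out, namely that $\ker X = k\cdot v_n$ and $\ker Y = k\cdot v_{-n}$ on the simple restricted $\lie{sl}_2$-module $M$, the latter using $n \leq p-1$ to see that the coefficients $i(n-i+1)$ are nonzero. Nothing is missing.
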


From this we can see that if $m_{\omega_0} \neq 0$, then it lies in the highest weight space \emph{and} in the lowest weight space. This tells us that the only case when $m_{\omega_0} \neq 0$ is when we are inducing from a one-dimensional $\sl_2$-module $L_0(a,a)$.

\section{Finding maximal vectors and determining induced modules and their composition factors}

\subsection{General considerations}

Recall that we have the following result:
\begin{thm}
  There are $p$ isomorphism classes of irreducible restricted representations of $\sl_2$, with representatives $L_0(z)$ for $z \in \set{0,1, \ldots, p-1}$, where $L_0(z)$ has dimension $z+1$. 
\end{thm}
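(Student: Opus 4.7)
The plan is to combine the standard highest-weight analysis for $\sl_2$ with the restrictedness constraint. First I would note that the restricted $p$-operation on $\sl_2$ satisfies $X^{[p]} = Y^{[p]} = 0$ and $H^{[p]} = H$, so on any restricted module the operator $H$ satisfies $H^p = H$; since $t^p - t = \prod_{a \in \mathbb{F}_p}(t - a)$ splits over $k$, every $H$-eigenvalue lies in $\mathbb{F}_p$.

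For existence, I would construct $L_0(z)$ for each $z \in \set{0, 1, \ldots, p-1}$ explicitly as the $(z+1)$-dimensional space $k \langle v_0, v_1, \ldots, v_z \rangle$ with $H \cdot v_i = (z - 2i) v_i$, $Y \cdot v_i = v_{i+1}$, and $X \cdot v_i = i(z - i + 1) v_{i-1}$ (setting $v_{-1} = v_{z+1} = 0$). A direct check verifies the Lie relations on basis vectors and the restricted identities $X^p = Y^p = 0$, $H^p = H$; simplicity then follows since the $v_i$ have distinct $H$-weights and the nonzero scalars $i, z - i + 1 \in \set{1, \ldots, p-1}$ appearing in the actions allow any nonzero submodule to contain some $v_i$ and then generate all of $L_0(z)$.

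For uniqueness, let $L$ be a simple restricted $\sl_2$-module. Because $X^p = 0$, $X$ is nilpotent, so $\ker X \neq 0$; combined with $[H, X] = 2X$, this forces $\ker X$ to be $H$-stable and hence to contain an $H$-eigenvector $v$ of weight $\alpha \in \mathbb{F}_p$, with lift $z \in \set{0, 1, \ldots, p-1}$. The universal property of induction gives a surjection $Z_\alpha \coloneqq \lie{u}(\sl_2) \otimes_{\lie{u}(kH + kX)} k_\alpha \twoheadrightarrow L$ sending $1 \otimes 1 \mapsto v$. Now $Z_\alpha$ is $p$-dimensional with basis $\set{Y^i \otimes 1}_{0 \leq i \leq p-1}$, and the mirror of the lemma above yields $X Y^i \otimes 1 = i(z - i + 1) Y^{i-1} \otimes 1$; this identifies the unique maximal submodule of $Z_\alpha$ as $\langle Y^{z+1} \otimes 1, \ldots, Y^{p-1} \otimes 1 \rangle$, with quotient isomorphic to $L_0(z)$ by matching action formulas. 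The $p$ modules $L_0(0), \ldots, L_0(p-1)$ are pairwise non-isomorphic by their distinct dimensions.

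The main subtlety is identifying the unique maximal submodule of $Z_\alpha$: the scalar $i(z - i + 1)$ must be nonzero in $k$ precisely for $1 \leq i \leq z$, which is controlled by our choice of lift $z \in \set{0, 1, \ldots, p-1}$. The factor $i$ vanishes only at $i \equiv 0 \pmod{p}$, and $z - i + 1$ vanishes only at $i \equiv z + 1 \pmod{p}$; both are ruled out in the range $1 \leq i \leq z$, and a different lift of $\alpha$ would shift the cut-off and require extra bookkeeping.
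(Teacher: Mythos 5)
Your proof is correct. The paper does not actually prove this theorem --- it is recalled as a classical fact (the standard description of simple restricted $\sl_2$-modules) and used without argument --- so there is nothing to compare against beyond noting that your route is the standard one the literature relies on: restrictedness forces $H$-eigenvalues into $\mathbb{F}_p$ and kills the non-restricted highest weights, the baby Verma module $Z_\alpha$ is $p$-dimensional by the PBW basis of $\lie{u}(\sl_2)$ over the Borel, and the vanishing pattern of the scalars $i(z-i+1)$ isolates the unique maximal submodule. All the steps you flag as delicate (the choice of lift $z$, and the verification that $i(z-i+1)\neq 0$ for $1\leq i\leq z$) are handled correctly, so the argument is complete.
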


\begin{thm}
  There are $p^2$ isomorphism classes of irreducible restricted representations of $\gl_2$, with representatives $L_0(\lambda)$ for $\lambda \in \mathbb{F}_p^2$, where $L_0(\lambda)$ has dimension $\lambda_1 - \lambda_2 +1$. 
\end{thm}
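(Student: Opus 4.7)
The plan is to exploit the decomposition $\gl_2 = \sl_2 \oplus k\cdot Z$, where $Z = x\del_x + y\del_y$ is central in $\gl_2$, together with the preceding theorem classifying the simple restricted $\sl_2$-modules. The classification for $\gl_2$ should then reduce to classifying pairs consisting of a simple restricted $\sl_2$-module and a central character.

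First I would observe that for any simple restricted $\gl_2$-module $L$, the central element $Z$ acts by a scalar $c \in k$ by Schur's lemma. The restricted condition forces $c^p = c$ (since $Z$ is toral, $Z^{[p]} = Z$, so $\rho(Z)^p = \rho(Z)$), hence $c \in \mathbb{F}_p$. Conversely, given any simple restricted $\sl_2$-module $L_0(z)$ together with any choice of $c \in \mathbb{F}_p$, one can extend the action to $\gl_2$ by letting $Z$ act as $c$, and the result is a simple restricted $\gl_2$-module, since any $\gl_2$-submodule is an $\sl_2$-submodule. Two such modules are isomorphic iff they have the same $z$ and the same $c$, giving a total of $p \cdot p = p^2$ isomorphism classes.

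Next I would translate the parametrization to highest weights $\lambda \in \mathbb{F}_p^2$. A maximal vector $v$ of such a module satisfies $x\del_x \cdot v = \lambda_1 v$ and $y\del_y \cdot v = \lambda_2 v$, so that $H \cdot v = (\lambda_1 - \lambda_2) v$ and $Z \cdot v = (\lambda_1 + \lambda_2) v$; conversely, any choice of $(\lambda_1, \lambda_2) \in \mathbb{F}_p^2$ determines both the $\sl_2$-highest weight $z = \lambda_1 - \lambda_2 \pmod{p}$ and the central scalar $c = \lambda_1 + \lambda_2 \pmod{p}$, and this map $\lambda \mapsto (z, c)$ is a bijection $\mathbb{F}_p^2 \to \mathbb{F}_p^2$. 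The dimension formula then follows from the previous theorem: $\dim_k L_0(\lambda) = \dim_k L_0(z) = z + 1 = \lambda_1 - \lambda_2 + 1$, where the difference is interpreted as its representative in $\{0,1,\ldots,p-1\}$.

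The only mild obstacle is bookkeeping the restricted condition on the center to rule out extensions with $c \notin \mathbb{F}_p$; everything else is a direct consequence of the previous theorem and the fact that the $\sl_2$ part and the central part of $\gl_2$ act independently on any simple module.
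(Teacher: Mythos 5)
Your argument is correct, and it is the standard one: decompose $\gl_2 = \sl_2 \oplus k\langle x\del_x + y\del_y\rangle$, use Schur's lemma plus torality of the centre to force $c \in \mathbb{F}_p$, and observe that $\lambda \mapsto (\lambda_1-\lambda_2,\, \lambda_1+\lambda_2)$ is a bijection on $\mathbb{F}_p^2$ since $p \neq 2$. The paper states this theorem as known background without proof, so there is nothing to compare against, but your reasoning is consistent with how the result is used there (e.g.\ the later remark that simple restricted $\gl_2$-modules restrict to simple $\sl_2$-modules, which is exactly the ``$Z$ acts by a scalar, so $\sl_2$-submodules are $\gl_2$-submodules'' point underlying your surjectivity step).
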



In what follows, let $L_0(\lambda)$ be the $\gl_2 \cong \widehat{H}_{0}$-module of highest weight $\lambda = (\lambda_1, \lambda_2)$, which we often view as the $\sl_2$-module $L_0(\lambda_1-\lambda_2)$ by restriction.

We adopt the following setup for our restricted $\widehat{H}_0$-modules $M$ (see Section~\ref{sec:sl2}): 

We pick an eigenbasis $\set{v_{-n}, v_{-n+2}, \ldots, v_n}$ which we relabel as $\set{m_1, m_2, \ldots, m_{n+1}}$ by sending $v_{-n +2i} \mapsto m_{i+1}$. Recall that with this eigenbasis we have
\[ X \cdot m_i = m_{i+1},\]
where $X \cdot m_{n+1} = 0$.

From this and by using the results in Section~\ref{sec:sl2}, we get the following formula for the action of $Y$ on our chosen basis:
\[ Y \cdot m_i = (i-1) \paren{n-i+2} m_{i-1},\]
noting again that $Y \cdot m_1 = 0$.

Throughout, we write $Z(a,b)$ for $Z(L_0(a,b))$ and $L(a,b)$ for the unique maximal simple quotient of $Z(a,b)$.

\subsection{Modules induced from one-dimensional modules}
We start by looking at inducing to $\widehat{H}$ from one-dimensional modules $M\cong L_0(a,a)$, where $a \in \mathbb{F}_p$. Here we have an eigenbasis $\set{m}$ for $M$ with $X \cdot m = 0 = Y \cdot m$.

We have the following:
\begin{prop}
  Let $M \cong L_0(a,a)$, then any maximal vector $v$ for $Z(M)$ has the general form \[ \mu_1 \paren{1 \otimes  m} + \mu_2 \paren{\del_y \otimes  m} + \mu_3\paren{ \del_x'^{p-1} \del_y^{p-1} \otimes  m},\] where $k\bracket{m} = M$. 
\end{prop}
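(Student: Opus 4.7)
The plan is to fix a maximal vector $v$ and expand it uniquely as $v = \sum_{a \in \mathcal{A}} c_a (\del_x' \del_y)^a \otimes m$ with $c_a \in k$, using the PBW form~(\ref{basisform}) together with the fact that $M = k\bracket{m}$ is one-dimensional. The goal is to show that $c_a$ vanishes outside the three positions $(0,0)$, $(0,1)$, $(p-1,p-1)$. Because $M$ is one-dimensional we have $X \cdot m = 0 = Y \cdot m$, so the general action formulas derived in Section~\ref{Actions} simplify substantially; I will exploit this by applying just three elements of $N$ to $v$, namely $x\del_y$, $B$, and $A$.

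First I use $x\del_y \cdot v = 0$ via Lemma~\ref{xdely}. With $X\cdot m = 0$ the equation reduces to $\sum_a a_1 c_a \del_x'^{a_1-1}\del_y^{a_2+1}\otimes m = 0$. Terms with $a_2 = p-1$ vanish since $\del_y^{[p]} = 0$, while the remaining terms are distinct PBW basis vectors; hence $a_1 c_a = 0$ whenever $a_1 \geq 1$ and $a_2 \leq p-2$. This forces $c_a = 0$ in that range, leaving the potentially non-zero $c_a$ confined to $a_1 = 0$ (any $a_2$) or $a_2 = p-1$ (any $a_1$).

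Next I apply $B \in \widehat{H}_{(1)} \subset N$; the formula from Section~\ref{Actions} reduces, using $X\cdot m = 0$, to $\sum_a r_a c_a \del_x'^{a_1-1}\del_y^{a_2}\otimes m = 0$. To evaluate $r_a$ at the positions that could still contribute I use the weight constraint: $m$ is the unique weight vector of $L_0(a,a)$, so $\lambda(a)_1 = \lambda(a)_2 = a$, and therefore $r_a = a_1 a_2 - \binom{a_1}{2}$. Among the positions with $a_1 \geq 1$ still in play, only those with $a_2 = p-1$ remain; there $r_a \equiv -a_1(a_1+1)/2 \pmod p$, which vanishes in $\mathbb{F}_p$ precisely for $a_1 \in \{0, p-1\}$. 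Combined with $a_1 \geq 1$, this forces $a = (p-1, p-1)$ among positions with $a_1 \geq 1$.

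Finally I apply $A \in \widehat{H}_{(1)} \subset N$; using both $X\cdot m = 0$ and $Y\cdot m = 0$ the action reduces to $\sum_a s_a c_a \del_x'^{a_1}\del_y^{a_2-1}\otimes m = 0$, with $s_a = -a_1 a_2 + \binom{a_2}{2}$ after substituting $\lambda(a)_1 = \lambda(a)_2 = a$. For the remaining positions with $a_1 = 0$ and $a_2 \geq 1$ this becomes $\binom{a_2}{2}$, which vanishes in $\mathbb{F}_p$ exactly for $a_2 \in \{0, 1\}$; hence in this range only $a = (0, 1)$ survives. Combining the three steps, $c_a \neq 0$ forces $a \in \{(0,0), (0,1), (p-1,p-1)\}$, giving the desired form. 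I expect the main obstacle to be bookkeeping in $\mathbb{F}_p$: ensuring the binomial arithmetic for the boundary indices $a_1 = p-1$ and $a_2 = p-1$ is correct, since an off-by-one or sign slip in evaluating $r_a$ or $s_a$ there could spuriously rule in or out one of the three positions. Beyond that, the argument is a direct specialization of the machinery of Section~\ref{Actions} to the one-dimensional setting.
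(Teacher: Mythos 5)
Your argument is correct and follows essentially the same route as the paper: apply $x\del_y$, $B$, and $A$ to the PBW expansion, use $X\cdot m = Y\cdot m = 0$ and $\lambda(a)_1=\lambda(a)_2$ to reduce to the scalar conditions $a_1 c_a=0$, $r_ac_a=0$, $s_ac_a=0$, and then solve $-a_1(a_1+1)/2=0$ and $\binom{a_2}{2}=0$ in $\mathbb{F}_p$ to isolate the three surviving positions. The binomial arithmetic at the boundary indices checks out, so no changes are needed.
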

\begin{proof}
Let $v$ be a maximal vector, so we write $v = \sum_{a \in \mathcal{A}} \paren{\del_x'\del_y}^{a} \otimes m_a$. For each $m_a$ write in fact $m_a = k_a m$, where $k_a \in k$. From $x \del_y \cdot v = 0$, we obtain the following (see Lemma~\ref{xdely}): 

\[ 0 = - \sum_{a \in \mathcal{A}} a_1 \del_x'^{a_1-1} \del_y^{a_2 +1} \otimes k_a m.  \]
Hence we see that no cancellation occurs between different terms. Thus, if $k_a \neq 0$, then $a_1 = 0$ or $a_2 = p-1$.

The rest of the following are done similarly, see Section~\ref{Actions} for the formulae.

From $B \cdot v = 0$, we obtain the following:

\begin{center}if $k_a \neq 0$, then $a_1 = 0$ or $r_a = 0$. \end{center}

From $A \cdot v = 0$, we obtain the following:

\begin{center}
if $k_a \neq 0$, then $a_2 = 0$ or $s_a = 0$.  
\end{center}

Suppose now that $k_a \neq 0$ and $a_1 \neq 0$. We must have $a_2 = p-1$ and $r_a = s_a = 0$. This gives:
\[a_1 a_2 - \binom{a_1}{2} = -a_1 a_2 + \binom{a_2}{2} = 0, \] so in fact:
\[ -a_1 - \binom{a_1}{2} = a_1 + 1 = 0,\]
which gives $a_1 = p-1$.

Thus, we showed:

\begin{center}
 $k_a \neq 0$ and $a_1 \neq 0$ imply $a = (p-1, p-1)$.  
\end{center}

 Hence our maximal vector is of the form:

\[ v = \paren{ \del_x'^{p-1} \del_y^{p-1} \otimes k_{\omega_0} m} + \sum_{0 \leq a_2 \leq p -1} \del_y^{a_2} \otimes k_{\paren{0, a_2}} m.\]

By letting $A$ act on $v$, we have:

If $k_{\paren{0, a_2}} \neq 0$, then $a_2 = 0$ or $s_a = 0$. Suppose $a_2 \neq 0$, then
\[s_a = -a_1 a_2 + \binom{a_2}{2} = 0,\]
but $a_1 = 0$ here, so we must have $\binom{a_2}{2} = 0$, and so $a_2 = 0,1$. Thus our maximal vector must be of the form \[ \mu_1 \paren{1 \otimes  m} + \mu_2 \paren{\del_y \otimes  m} + \mu_3\paren{ \del_x'^{p-1} \del_y^{p-1} \otimes  m}, \]
as claimed.
\end{proof}

We refine the previous proposition into:

\begin{prop}
  Let $M \cong L_0(a,a)$. If $v$ is a maximal vector for $Z(M)$, then $v = \mu_1 \paren{1 \otimes  m}$ or $v= \mu_2 \paren{\del_y \otimes  m}$ or $v = \mu_3\paren{ \del_x'^{p-1} \del_y^{p-1} \otimes  m}$, where $k\bracket{m} = M$. 
\end{prop}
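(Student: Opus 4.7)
The plan is to observe that a maximal vector is by definition a $T = k\langle x\del_x, y\del_y \rangle$-weight vector, and then to show that the three candidate summands from the previous proposition lie in three pairwise distinct weight spaces of $Z(M)$. Once this is established, no non-trivial linear combination involving more than one of them can itself be an eigenvector for $T$, and hence at most one of $\mu_1, \mu_2, \mu_3$ can be non-zero.

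To compute the weights I would reuse the formulae from Section~\ref{Actions}: for an arbitrary element $\sum_{a\in\mathcal{A}}(\del_x'\del_y)^a\otimes m_a \in Z(M)$ one has
\[ x\del_x \cdot \bigl((\del_x'\del_y)^a \otimes m_a\bigr) = (\del_x'\del_y)^a \otimes (x\del_x\cdot m_a - a_1 m_a),\]
and the analogous identity for $y\del_y$ with $-a_1$ replaced by $-a_2$. Since $M \cong L_0(a,a)$ is one-dimensional and $m$ has weight $(a,a)$, the PBW term $(\del_x'\del_y)^b\otimes m$ is a $T$-weight vector of weight $(a - b_1, a - b_2) \pmod p$. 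Plugging in $b=(0,0)$, $(0,1)$, and $(p-1,p-1)$ yields the three weights $(a,a)$, $(a,a-1)$, and $(a+1,a+1)$ respectively, which are pairwise distinct in $\mathbb{F}_p^2$ for $p \geq 5$; that closes the argument.

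The main obstacle is really only a bookkeeping concern: one should be comfortable that $\del_x'^{p-1}\del_y^{p-1}\otimes m$ is genuinely a single $T$-weight vector, in spite of the fact that $\del_x'$ itself is not a $y\del_y$-eigenvector in $\lie{u}(\widehat{H})$ --- the correction term $x^{(p-1)}\del_y$ in $\del_x' = \del_x - x^{(p-1)}\del_y$ disturbs the naïve bookkeeping. The resolution, already implicit in Section~\ref{Actions}, is that the successive commutators $[y\del_y,\del_x'^{j}]$ produce terms of the form $\pm\,x^{(p-t)}\del_y$ which all lie in $\widehat{H}_{(1)}$, or in the one remaining case equal $X = x\del_y \in N$, and hence act trivially on the one-dimensional module $M$ once they are brought past $\del_y^{p-1}$ to the right. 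This validates the clean formula above and thereby completes the refinement.
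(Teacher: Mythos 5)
Your argument is correct and is essentially the paper's own proof: both rest on the observation that a maximal vector is a $T$-weight vector and that the three candidate summands carry the pairwise distinct weights $(a,a)$, $(a,a-1)$ and $(a+1,a+1)$, so at most one coefficient survives. Your worry about correction terms is moot: with the paper's actual definition $\del_x' = \del_x - x^{(p-1)}y\del_y$ one has $[x\del_x,\del_x'] = -\del_x'$ and $[y\del_y,\del_x'] = 0$ exactly, so $(\del_x'\del_y)^b\otimes m$ is a genuine $T$-weight vector of weight $(a-b_1,a-b_2)$ with no corrections to absorb.
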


\begin{proof}
  Let $v$ be a maximal vector for $Z(M)$ of weight $\lambda$, so we write \[v = \mu_1 \paren{1 \otimes  m} + \mu_2 \paren{\del_y \otimes  m} + \mu_3\paren{ \del_x'^{p-1} \del_y^{p-1} \otimes  m}.\]
Now, each of the terms is a weight vector for $x \del_x$ and $y \del_y$. We calculate:
\[ x\del_x \cdot v = \mu_1 a \paren{1 \otimes  m} + \mu_2 a \paren{\del_y \otimes  m} + \mu_3 \paren{a+1} \paren{ \del_x'^{p-1} \del_y^{p-1} \otimes  m} = \lambda_1 v.\]
Thus, by comparing coefficients, we have $\mu_1 a = \lambda_1 \mu_1$, $\mu_2 a = \lambda_1 \mu_2$, and $\mu_3 \paren{a+1} = \lambda_1 \mu_3$. We conclude that either $\lambda_1 = a$ and $\mu_3 = 0$ or $\lambda_1 \neq a$ and $\mu_1 = \mu_2 = 0$. Therefore, either $v = \mu_1 \paren{1 \otimes  m} + \mu_2 \paren{\del_y \otimes  m}$ or $v = \mu_3\paren{ \del_x'^{p-1} \del_y^{p-1} \otimes  m}$.

Suppose the former is the case. We calculate:
\[ y\del_y \cdot v = \mu_1 a \paren{1 \otimes  m} + \mu_2 \paren{a-1} \paren{\del_y \otimes  m}  = \lambda_2 v.\]
So, by comparing coefficients, we have $\mu_1 a = \lambda_2 \mu_1$, $\mu_2 \paren{a-1} = \lambda_2 \mu_2$. Hence, either $\lambda_2 = a$ and $\mu_2 = 0$ or $\lambda_2 \neq a$ and $\mu_1 = 0$, as required.
\end{proof}

\begin{lem}\label{onedimbasis}
  In $Z(0,0)$, we have
\begin{align*}
 \widehat{H} \bracket{\del_y \otimes m} &= k \bracket{\del_x'^{i} \del_y^{j+1} \otimes m : 0 \leq i \leq p-1, 0 \leq j \leq p-2} \\
 &\oplus k\bracket{\del_x'^{k} \otimes m : 1 \leq k \leq p-1} ,
\end{align*}
as vector spaces.
In $Z(-1,-1)$, we have
\[ \widehat{H} \bracket{\del_x'^{p-1} \del_y^{p-1} \otimes m} = k\bracket{\del_x'^{p-1} \del_y^{p-1} \otimes m} \]

In $Z(a,a)$, $a \neq 0$, we have $\widehat{H} \bracket{\del_y \otimes m} = Z(a,a)$.

and so $\dim_k \widehat{H} \bracket{\del_y \otimes m} = p^2$ unless $a = 0$, in which case $\dim_k \widehat{H} \bracket{v} = p^2 -1$.

\end{lem}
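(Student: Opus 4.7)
Plan: I treat the three cases separately, in each case exhibiting a candidate spanning set for $\widehat{H}\bracket{v}$ and checking both that every spanning vector is reachable from $v$ and that the span is $\widehat{H}$-stable, using the generating set $\mathcal{G}$ from the previous subsection together with the action formulas from Section~\ref{Actions}. For case $a = -1$, I show $w := \del_x'^{p-1}\del_y^{p-1}\otimes m$ is annihilated by every non-scalar generator in $\mathcal{G}$. Since $w$ is a maximal vector, $N$ annihilates it, and the toral generators act as scalars. Only $\del_x'$ and $\del_y$ remain: $\del_y \cdot w = \del_x'^{p-1}\del_y^p\otimes m = 0$ by $\del_y^{[p]} = 0$, and $\del_x' \cdot w = \del_x'^p \del_y^{p-1}\otimes m = -y\del_y\del_y^{p-1}\otimes m$, which by Lemma~\ref{dely}(4) equals $-\del_y^{p-1}\otimes (y\del_y + 1)\cdot m = 0$ since $y\del_y \cdot m = -m$ on $L_0(-1,-1)$. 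Hence $kw$ is a $1$-dimensional $\widehat{H}$-submodule.

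For case $a \neq 0$, I produce every PBW basis vector starting from $v = \del_y\otimes m$. Applying $\del_y$ successively (via Equation~(\ref{delyaction}), using $X\cdot m = 0$) yields $\del_y^j \otimes m$ for $1 \leq j \leq p-1$. A direct computation using $[Y,\del_y] = -\del_x'$ gives $Y\cdot v = -\del_x'\otimes m$; successive applications of $\del_x'$ then yield $\del_x'^i\otimes m$ for $1 \leq i \leq p-1$, and one more application gives $\del_x'^p\otimes m = -y\del_y\otimes m = -a(1\otimes m)$, recovering $1\otimes m$ when $a \neq 0$. The remaining vectors $\del_x'^i\del_y^j\otimes m$ follow by further applications of $\del_x'$ and $\del_y$, so $\widehat{H}\bracket{v} = Z(a,a)$, of dimension $p^2$.

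For case $a = 0$ the identical computations produce every PBW basis vector except $1\otimes m$, since now $\del_x'^p \otimes m = 0$. Let $V$ be the span of the remaining $p^2 - 1$ vectors. To check $V$ is $\widehat{H}$-stable, note that $1\otimes m$ is the unique PBW basis vector of weight $(0,0)$, so for $D \in \mathcal{G}$ and $w \in V$ the coefficient of $1\otimes m$ in $D\cdot w$ can only be nonzero when $\mathrm{wt}(w) \equiv -\mathrm{wt}(D) \pmod p$. This narrows the verification to a short list of pairs: the crucial ones $\del_x' \cdot (\del_x'^{p-1}\otimes m) = -y\del_y\otimes m = 0$ and $\del_y \cdot (\del_y^{p-1}\otimes m) = \del_y^p\otimes m = 0$ are handled at once, and the residual cases (generators $A, C, Y, X, x^{(p-1)}\del_y$ on the few weight-matching basis vectors) are settled using the action formulas of Section~\ref{Actions}, which apply directly because $X\cdot m = Y\cdot m = \widehat{H}_{(1)}\cdot m = 0$ on the trivial module $L_0(0,0)$. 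Counting $p(p-1) + (p-1) = p^2 - 1$ gives the claimed dimension. The main obstacle is this closure verification, but the weight-space reduction makes each remaining computation routine.
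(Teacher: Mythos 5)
Your proof is correct and follows essentially the same route as the paper: generate all PBW monomials except $1\otimes m$ from $\del_y\otimes m$ via $\del_y$, $Y$ and $\del_x'$, use $\del_x'^p=-y\del_y$ to recover $1\otimes m$ exactly when $a\neq 0$, and check closure of the $(p^2-1)$-dimensional span when $a=0$. You in fact go further than the paper in two useful places — you verify the $Z(-1,-1)$ one-dimensional submodule claim explicitly (the paper's proof is silent on it) and you replace the paper's ``check closure using the basis of $\widehat{H}$'' with a weight-space reduction that isolates the handful of pairs needing computation — just remember to include the extra generator $J$ in that list when $p=5$.
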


\begin{proof}
  We must check that the basis elements are stable under the generators of $\widehat{H}$.

Consider $v \coloneqq \del_y \otimes m$. Then using $\del_x^i \del_y^j \in \lie{u}(\widehat{H})$ we see $\widehat{H}\bracket{v}$
 contains
\[ \set{\del_x'^{i} \del_y^{j+1} \otimes m : 0 \leq i \leq p-1, 0 \leq j \leq p-2}.\]
Now, $[Y, \del_y] = -\del_x'$, so
\[ Y \cdot v = \del_y \otimes Y \cdot m - \del_x' \otimes m = -\del_x' \otimes m.\]

Hence, $\widehat{H}\bracket{v}$ also contains the elements
\[ \set{\del_x'^{k} \otimes m : 1 \leq k \leq p-1}.\]

Now $\del_x' \cdot \del_x'^{p-1} = -y\del_y \otimes m = -a \cdot 1 \otimes m$.

If $a \neq 0$, then $-a \cdot 1 \otimes m \neq0$, and $\widehat{H} \bracket{v} = Z(a,a)$. Thus in this case, $Z(a,a)$ is simple.

If $a =0$, then $-a \cdot 1 \otimes m  = 0$, and this is all we get, since we can use our basis for $\widehat{H}$ to check that the above $k$-basis is closed under the action of $\widehat{H}$. Thus, $\dim_k \widehat{H}\bracket{v} = p^2 - 1$.
\end{proof}

We will need the following lemma to prove the main result of this subsection.

\begin{lem}\label{O-module}
  The restricted $\widehat{H}$-module $O(2;(1,1))/(k \cdot 1)$ is simple.
\end{lem}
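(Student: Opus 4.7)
The plan is to apply Lemma~\ref{maxvec}: $V := O(2;(1,1))/(k \cdot 1)$ is simple if and only if it is non-zero and is generated (as an $\widehat{H}$-module) by each of its maximal vectors. I would execute this in three steps.

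First, I would classify the maximal vectors of $V$. Under the torus $T = k\langle x\partial_x, y\partial_y\rangle$, every weight space of $V$ is one-dimensional, spanned by the class of $x^{(i)}y^{(j)}$ of weight $(i,j)$ for $(i,j) \in \mathbb{F}_p^2 \setminus \{(0,0)\}$, so any maximal vector is a scalar multiple of some $\overline{x^{(i)}y^{(j)}}$. Requiring $x\partial_y$ to annihilate it forces $i = p-1$ or $j = 0$, and a direct computation of $A \cdot \overline{x^{(i)}y^{(j)}}$ rules out $j = 0$ and, in the case $i = p-1$, gives the coefficient $\frac{(j+1)(j+2)}{2}$, leaving only the two candidates $v_1 := \overline{x^{(p-1)} y^{(p-1)}}$ and $v_2 := \overline{x^{(p-1)} y^{(p-2)}}$. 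Using the generating set $\{x\partial_y, x^{(p-1)}\partial_y, A, C\}$ (plus $J$ when $p=5$) for $N$ provided by the preceding section, I would then verify that $N$ annihilates both vectors, each generator giving zero because either some exponent of $x$ or $y$ reaches $p$, or a binomial coefficient vanishes modulo~$p$.

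Second, I would show that $v_1$ generates $V$. A straightforward induction, using $\partial_x' \cdot x^{(k)}y^{(l)} = x^{(k-1)}y^{(l)}$ for $k \geq 1$, gives $\partial_x'^{a} \partial_y^{b} \cdot v_1 = \overline{x^{(p-1-a)} y^{(p-1-b)}}$ for $0 \leq a,b \leq p-1$. These span $V$ (the case $a = b = p-1$ returns $\overline{1} = 0$), so $\widehat{H} \cdot v_1 = V$.

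Third, and most delicately, I would show that $v_2$ also generates $V$. The naive weight-based approach fails: the weight $(0,1)$ subspace of $\widehat{H}$ is spanned by $A$ alone, and $A \cdot v_2 = 0$, so no single element of $\widehat{H}$ shifts $v_2$ to $v_1$. The remedy I plan to use is the composite operator $Y\partial_x'^{p-1}$: first $\partial_x'^{p-1} \cdot v_2 = \overline{y^{(p-2)}}$, and then exploiting the ``tail'' $-x^{(p-1)}y^{(2)}\partial_y$ of $Y$ gives
\[
Y \cdot \overline{y^{(p-2)}} = -\overline{x^{(p-1)} y^{(2)} \cdot y^{(p-3)}} = -\binom{p-1}{2}\, v_1 = -v_1,
\]
since $\binom{p-1}{2} \equiv 1 \pmod{p}$. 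Hence $v_1 \in \widehat{H}\cdot v_2$, and combined with Step~2, $\widehat{H}\cdot v_2 = V$. Thus both maximal vectors generate $V$, and Lemma~\ref{maxvec} concludes simplicity. The main obstacle is Step~3: a direct weight-shift argument in $\widehat{H}$ suggests $v_2$ cannot reach $v_1$, and it is only by first descending via $\partial_x'^{p-1}$ to $\overline{y^{(p-2)}}$---an element on which the otherwise invisible ``tail'' of the non-graded basis element $Y$ acts non-trivially---that the path reveals itself.
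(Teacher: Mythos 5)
Your proposal is correct and follows essentially the same strategy as the paper: reduce the maximal vectors to scalar multiples of $\overline{x^{(p-1)}y^{(p-1)}}$ and $\overline{x^{(p-1)}y^{(p-2)}}$, then show each generates the whole module, using the ``tail'' of a non-graded basis element to climb back up to $\overline{x^{(p-1)}y^{(p-1)}}$. Your two small deviations are both sound and in fact streamline matters: the observation that every $T$-weight space of $O(2;(1,1))/(k\cdot 1)$ is one-dimensional replaces the paper's successive elimination with $x\del_y$, $x^{(2)}\del_y$ and $A$ applied to a general linear combination, and your use of $Y$ on $\overline{y^{(p-2)}}$ (with $\binom{p-1}{2}\equiv 1$) plays exactly the role of the paper's application of $C$ to $\overline{y^{(p-3)}}$.
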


\begin{proof}
  By Lemma~\ref{maxvec} it suffices to show that all the maximal vectors generate the whole module. 

Let $v \in O(2;(1,1))/(k \cdot 1)$ be a maximal vector. Then we can write \[v = \sum_{0 \leq a,b \leq p-1} k_{a,b} x^{\paren{a}} y^{\paren{b}},\] as its representative in $O(2;(1,1))$, so that in the quotient, we identify the term $k_{0,0}1$ with  $ 0$. We calculate
\[0 = x \del_y \cdot v = \sum_{0 \leq a,b \leq p-1} a k_{a,b} x^{\paren{a+1}} y^{\paren{b-1}}.\]

Therefore, 
\begin{center}
  if $k_{a,b} \neq 0$, then $a= 0, p-1$ or $b= 0$.
\end{center}

Hence our maximal vector is of the form:
\[ v = \sum_{1 \leq b \leq p-1} k_{0,b}  y^{\paren{b}} + \sum_{1 \leq b \leq p-1} k_{p-1,b} x^{\paren{p-1}} y^{\paren{b}} +  \sum_{1 \leq a \leq p-1} k_{a,0} x^{\paren{a}} .\]

Now we calculate:
\[0 =  x^{\paren{2}} \del_y \cdot v = \sum_{1 \leq b \leq p-1} k_{0,b} x^{\paren{2}} y^{\paren{b-1}}.\]

From this we deduce:
\begin{center}
   $k_{0, b} = 0$, for all $1 \leq b \leq p-1$.
\end{center}

Thus our maximal vector is of the form:
\[ v = \sum_{1 \leq b \leq p-1} k_{p-1,b} x^{\paren{p-1}} y^{\paren{b}} +  \sum_{1 \leq a \leq p-1} k_{a,0} x^{\paren{a}} .\]

We calculate
\[ 0 = A \cdot v = \sum_{1 \leq b \leq p-1} \paren{\binom{b+1}{2} + b+1 }k_{p-1,b}  x^{\paren{p-1}}y^{\paren{b+1}} + \sum_{1 \leq a \leq p-1} a k_{a,0}  x^{\paren{a}}y.\]

Hence,
\begin{center}
 $k_{a,0} = 0$ for all $1 \leq a \leq p-1$.  
\end{center}

We also get that
\begin{center}
  if $k_{p-1,b} \neq 0$, then $b = p-1, p-2$.
\end{center}

We conclude that $v$ must be of the form 
\[ v =  k_{p-1,p-2} x^{\paren{p-1}} y^{\paren{p-2}} + k_{p-1,p-1} x^{\paren{p-1}} y^{\paren{p-1}}.\]

Since $v$ is a weight vector, we argue as before to conclude that in fact $v = \mu_1 x^{\paren{p-1}} y^{\paren{p-1}}$ or $v = \mu_2 x^{\paren{p-1}} y^{\paren{p-2}}$, noting that indeed $N \cdot v = 0$.

Suppose now  $v = \mu_1 x^{\paren{p-1}} y^{\paren{p-1}} \neq 0$. We calculate 
\begin{align*}
  \del_x' \cdot x^{\paren{a}}y^{\paren{b}} &=  x^{\paren{a-1}} y^{\paren{b}} \\
  \del_y \cdot x^{\paren{a}} y^{\paren{b}} &=  x^{\paren{a}} y^{\paren{b-1}} ,
\end{align*}
the first identity being valid only for $1 \leq a \leq p-1$. Consequently, by applying powers of $\del_x'$ and $\del_y$ consecutively, we see we can obtain all of $O(2;(1,1))/(k \cdot 1)$.

Suppose now that $v =  \mu_2 x^{\paren{p-1}} y^{\paren{p-2}} \neq 0$. By using the above identities, we see that $v_1 \coloneqq y^{\paren{p-3}} \in \widehat{H}\bracket{v}$. Then we
calculate \[ C \cdot v_1 =   x^{\paren{p-1}} y^{\paren{p-1}},\] and so $\widehat{H}\bracket{v} = O(2;(1,1))/(k \cdot 1)$, and we are done.\end{proof}
\begin{thm}
  The induced module $Z(M) \cong Z(a,a)$  is simple unless $a = 0$ or $a = p-1$, in which case it has composition factors of dimension $1$ and $p^2 -1$.
\end{thm}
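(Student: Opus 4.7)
The plan is to apply Lemma~\ref{maxvec} to $Z(a,a)$, using the preceding two propositions which classify every maximal vector of $Z(a,a)$ as, up to a nonzero scalar, one of $v_0 \coloneqq 1 \otimes m$, $v_1 \coloneqq \del_y \otimes m$, or $v_2 \coloneqq \del_x'^{p-1}\del_y^{p-1} \otimes m$. Trivially $\widehat{H}\bracket{v_0} = Z(a,a)$, and by Lemma~\ref{onedimbasis} the submodule $\widehat{H}\bracket{v_1}$ is all of $Z(a,a)$ when $a \neq 0$ and the codimension-one subspace $W$ otherwise. So the analysis reduces to understanding $\widehat{H}\bracket{v_2}$ and the two degenerate cases $a \in \set{0,p-1}$.

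The key sublemma is that $v_1 \in \widehat{H}\bracket{v_2}$ whenever $a \neq -1 = p-1$. Using the restricted $p$-power relation $(\del_x')^{[p]} = -y\del_y$ together with Lemma~\ref{dely}(4) one computes
\[ \del_x' \cdot v_2 = (\del_x')^p \del_y^{p-1} \otimes m = -(a+1)\, \del_y^{p-1} \otimes m, \]
so $\del_y^{p-1} \otimes m \in \widehat{H}\bracket{v_2}$. Iterated application of $Y$ using its action formula from Section~\ref{Actions} produces $\del_x'^i \del_y^{p-1-i} \otimes m$ for every $0 \leq i \leq p-1$, and further combinations with $\del_x'$, $\del_y$ and the remaining generators of $\widehat{H}$ (notably using $B$ and $D$ to shift across anti-diagonals of the PBW basis) reach $v_1$. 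Given the sublemma, the case $a \notin \set{0,p-1}$ is immediate: each of $v_0, v_1, v_2$ generates $Z(a,a)$ — the last because $\widehat{H}\bracket{v_2} \supseteq \widehat{H}\bracket{v_1} = Z(a,a)$ — so $Z(a,a)$ is simple by Lemma~\ref{maxvec}.

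For $a = p-1$, Lemma~\ref{onedimbasis} shows $V \coloneqq \widehat{H}\bracket{v_2}$ is one-dimensional, a copy of the trivial module, and $Z(-1,-1)/V$ has dimension $p^2 - 1$. To prove this quotient is simple, note that $x^{(p-1)}y^{(p-1)} \in O(2;(1,1))$ is a maximal vector of weight $(-1,-1)$; by Frobenius reciprocity there is a $\widehat{H}$-homomorphism $\phi \colon Z(-1,-1) \to O(2;(1,1))$ sending $1 \otimes m \mapsto x^{(p-1)}y^{(p-1)}$, and direct computation gives $\phi(v_2) = 1$. Since $\del_x^{p-1}\del_y^{p-1} \cdot x^{(p-1)}y^{(p-1)} = 1$, the submodule of $O(2;(1,1))$ generated by $x^{(p-1)}y^{(p-1)}$ is the whole of $O(2;(1,1))$, so $\phi$ is surjective and, by equality of dimensions, an isomorphism. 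Under $\phi$, $V$ corresponds to $k \cdot 1$, giving $Z(-1,-1)/V \cong O(2;(1,1))/(k\cdot 1)$, which is simple by Lemma~\ref{O-module}. For $a = 0$, $W = \widehat{H}\bracket{v_1}$ has dimension $p^2 - 1$ and $Z(0,0)/W$ is the trivial one-dimensional module. To show $W$ is simple I apply Lemma~\ref{maxvec} to $W$: its maximal vectors are those of $Z(0,0)$ that lie in $W$, namely $v_1$ and $v_2$ (both in $W$ by the explicit basis in Lemma~\ref{onedimbasis}); $v_1$ generates $W$ by construction, and the sublemma (with $a = 0 \neq -1$) gives $\widehat{H}\bracket{v_2} \supseteq \widehat{H}\bracket{v_1} = W$, with equality since $v_2 \in W$.

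The main obstacle is the sublemma's combinatorial core: verifying that successive applications of the generators of $\widehat{H}$ to $\del_y^{p-1} \otimes m$ do reach $v_1$. The boundary cases of the $Y$-action formula (when $a_1 \in \set{p-2,p-1}$) and the need to move off the anti-diagonal $a_1 + a_2 = p-1$ demand careful bookkeeping with the action formulas of Section~\ref{Actions}, but no genuinely new idea is required.
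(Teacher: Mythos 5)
Your argument is correct and reaches the same conclusion by the same overall architecture as the paper (reduce to the three candidate maximal vectors, use Lemma~\ref{onedimbasis} and Lemma~\ref{O-module}, and Frobenius reciprocity for the $(-1,-1)$ case), but you handle the candidate $v_2 = \del_x'^{p-1}\del_y^{p-1}\otimes m$ quite differently. The paper simply computes $C\cdot v_2 = (p-1-a)\,\del_y^{p-3}\otimes m$ and concludes that $v_2$ is not a maximal vector at all unless $a=p-1$, so for $a\neq p-1$ it never needs to be considered in Lemma~\ref{maxvec}. You instead prove the stronger statement that $\widehat{H}\bracket{v_2}\supseteq\widehat{H}\bracket{v_1}$ whenever $a\neq p-1$. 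Your sublemma is true and your sketch is completable from the paper's formulas: the computation $\del_x'\cdot v_2 = -(a+1)\,\del_y^{p-1}\otimes m$ is right, iterating $Y$ does sweep out the anti-diagonal $\del_x'^{i}\del_y^{p-1-i}\otimes m$ (the coefficients $-a_2$ never vanish until $a_2=0$), and the descent to $v_1$ is cleanest via $B$ alone, since on a one-dimensional $M$ one has $B\cdot(\del_x'^{a_1}\del_y^{a_2}\otimes m) = \binom{a_1+1}{2}\del_x'^{a_1-1}\del_y^{a_2}\otimes m$ with $\binom{a_1+1}{2}\neq 0$ for $1\leq a_1\leq p-2$, so $B^{p-2}$ carries $\del_x'^{p-2}\del_y\otimes m$ to a nonzero multiple of $\del_y\otimes m$. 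This costs you more work than the paper's one-line observation, but it buys slightly more information (even the non-maximal vector $v_2$ generates). Your treatment of $a=p-1$ — building an explicit isomorphism $Z(-1,-1)\cong O(2;(1,1))$ identifying $k\bracket{v_2}$ with $k\cdot 1$ — is a mild variant of the paper's surjection onto the simple quotient $O(2;(1,1))/(k\cdot 1)$ followed by a dimension count; both rest on Lemma~\ref{O-module}. The only thing I would insist you tighten is the phrase ``further combinations with the remaining generators \dots reach $v_1$'': as written it is a promissory note, and you should replace it with the explicit $Y$-then-$B$ path above.
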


\begin{proof}
 Consider the potential maximal vector $v = \del_x'^{p-1} \del_y^{p-1} \otimes  m$. Because $C$ must annihilate maximal vectors, and \[C \cdot v = \paren{p-1 - \lambda(a)_2 } \del_y^{p-3} \otimes m =  \paren{p-1 - a } \del_y^{p-3} \otimes m,\] we conclude that $v$ is maximal only when $a = p-1$.

Now, $\widehat{H}\bracket{v} = k\bracket{v}$ is one-dimensional, so in the $a = p-1$ case, we conclude that $Z(a,a)$ is not simple. Furthermore, this is the only proper submodule, as $\widehat{H}\bracket{\del_y \otimes m}$ here generates all of $Z(-1,-1)$.

We calculate that the vector $v$ has weight $\lambda = (a +1, a +1) = (0,0)$. It remains to show that the quotient $Z(-1,-1)/\widehat{H}\bracket{v}$ is simple.

We have by Frobenius reciprocity that, given a simple $\widehat{H}$-module $M$:
\[ \Hom_{\widehat{H}_{(0)}}(L_0(-1, -1), M) \cong \Hom_{\widehat{H}}(Z(-1, -1), M) .\]
This tells us that there is a simple $\widehat{H}_{0}$-submodule of $M$ isomorphic to $L_0(-1,-1)$ if, and only if, $Z(-1,-1)$ surjects to $M$.  That is, $M$ has a maximal vector of highest weight $(-1,-1)$ if, and only if, $Z(-1,-1)$ surjects to $M$.


But $O(2;(1,1))/\paren{k \cdot 1} $ is simple by Lemma~\ref{O-module} and it has a $(-1,-1)$ weight maximal vector. Hence, $Z(-1, -1)$ surjects to it. Hence, $Z(-1,-1)$ has a $(p^2 -1)$-dimensional simple quotient. By consideration of dimensions, the quotient $Z(-1,-1)/ \widehat{H}\bracket{v}$ is this simple quotient, so it is $L(-1,-1)$. 

Now, $\widehat{H}\bracket{v}$ is a one-dimensional simple $\widehat{H}$-module of highest weight $(0,0)$, which must be trivial and is isomorphic to $L(0,0)$. Thus, we have composition factors
\[ [L(-1,-1), L(0,0)]\]
of dimension $p^2-1, 1$.

Now let $a \neq p-1$. So $v$ above is not maximal. Clearly $1 \otimes m$ always generates all of $Z(M)$, so we now look at $v = \del_y \otimes m$.

If $a \neq 0$, then $\widehat{H}\bracket{v} = Z(a,a)$ by Lemma~\ref{onedimbasis}, and so $Z(a,a)$ is simple. 

On the other hand, if $a = 0$, then$\widehat{H}\bracket{v}$ is a non-trivial simple submodule of dimension $p^2-1$, as it is generated by each of its maximal vectors, namely the vectors of the form $\del_y \otimes \mu m$ for non-zero $\mu$. We also calculate that the vector $v$ has weight $\lambda = (a, a-1)$. Hence, $v$ here is maximal vector of weight $(0, -1)$, which means that $\widehat{H}\bracket{v} \cong L(0,-1)$. The quotient by $\widehat{H}\bracket{v}$ is one-dimensional and hence simple, and therefore is $L(0,0)$. Thus, $Z(0,0)$ has composition factors
\[ [L(0,-1), L(0,0)]\]
of dimension $p^2-1$ and $1$. 
\end{proof}

\subsection{Modules induced from two-dimensional modules}

Let $M \cong L_0(a,b)$ with $a- b =1$. Pick an eigenbasis $\set{m_1, m_2}$ for $M$ with $X \cdot m_1 = m_2$ and $Y \cdot m_2 = m_1$. We refer the reader to Section~\ref{sec:sl2} for more details.
\begin{prop}
  Let $M \cong L_0(a,b)$, with $a-b =1$, then any maximal vector $v$ for $Z(M)$ has the general form \[ \mu_1 \paren{1 \otimes m_2} +\mu_2 \paren{ \del_x' \otimes  m_2 +\del_y \otimes  m_1} + \mu_3 \paren{\del_x' \del_y \otimes  m_2 + \del_y^2 \otimes m_1}.\]
\end{prop}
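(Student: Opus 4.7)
The argument parallels the one-dimensional case but with bookkeeping doubled by the two-component structure of $M$. Write the PBW expansion $v = \sum_{a \in \mathcal{A}}(\del_x'\del_y)^a \otimes m_a$ and decompose each $m_a = k^1_a m_1 + k^2_a m_2$, so that $X \cdot m_a = k^1_a m_2$ and $Y \cdot m_a = k^2_a m_1$. The task is then to locate the support of the pair $(k^1_a, k^2_a)$ in $\mathcal{A}$ and determine the linear relations tying the surviving coefficients together. Proposition~\ref{info} already gives $k^1_a = 0$ for $a_1 = p-1$ or $a_2 = 0$, and $k^2_a = 0$ for $a_2 = p-1$; this is the starting point.

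The additional relations come from $x\del_y \cdot v = 0$, $A \cdot v = 0$, and $B \cdot v = 0$ (all three elements lie in $N$). Comparing PBW coefficients in Lemma~\ref{xdely} yields the recursion $X \cdot m_{(c_1,c_2)} = (c_1+1)\, m_{(c_1+1,c_2-1)}$ whenever $c_1 \le p-2$ and $c_2 \ge 1$. Projecting onto the $m_1$-component strengthens the support information to $k^1_a = 0$ for all $a$ with $1 \le a_1 \le p-1$ and $0 \le a_2 \le p-2$; projecting onto $m_2$ yields the coupling $k^1_{(c_1,c_2)} = (c_1+1)\, k^2_{(c_1+1,c_2-1)}$. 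Next, extracting coefficients from $B \cdot v = 0$ and substituting the $X$-recursion produces the polynomial condition $r_{(c_1+1,c_2)} = (c_1+1)(c_2+1)$ at any interior $a = (c_1+1,c_2)$ with $m_a \neq 0$. Extracting from $A \cdot v = 0$ yields $s_{(c_1,c_2+1)}\, m_{(c_1,c_2+1)} = -(c_1+1)\, Y \cdot m_{(c_1+1,c_2)}$ in the interior, together with $s_a m_a = 0$ when $a_1 = p-1$.

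The final step is a case analysis. Any $a$ at which $k^1_a \neq 0$ has $m_a \in k m_1$ (so $\lambda(a)_1 - \lambda(a)_2 = -1 + a_1 - a_2$) and must lie in the column $a_1 = 0$ or the row $a_2 = p-1$; any interior $a$ at which $k^2_a \neq 0$ has $m_a \in k m_2$ (so $\lambda(a)_1 - \lambda(a)_2 = 1 + a_1 - a_2$). Substituting these expressions into the explicit formulas for $r_a$ and $s_a$ from Definition~\ref{abbreviations} converts the constraints above into linear equations in $a_1, a_2 \pmod p$; their only simultaneous solutions lie in $\{(0,0),(1,0),(0,1),(1,1),(0,2)\}$. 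The $X$-coupling then enforces $k^1_{(0,1)} = k^2_{(1,0)} =: \mu_2$ and $k^1_{(0,2)} = k^2_{(1,1)} =: \mu_3$, with $k^2_{(0,0)} =: \mu_1$ free, yielding the stated form. The main obstacle is exactly this case analysis: because $\lambda$ is not fixed in advance, the weight shift $\lambda(a)_1 - \lambda(a)_2$ differs between the $m_1$-region and the $m_2$-region, and one must separately eliminate stray solutions in the row $a_2 = p-1$ (arising from $r_a = \binom{a_1}{2} = 0$) and in the interior (from $r_a = a_1(a_2+1)$) to pin the support down to exactly those five points.
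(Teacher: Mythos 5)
Your overall strategy is the same as the paper's: expand $v$ in the PBW basis, apply elements of $N$ to get coefficient relations, and narrow the support of the $m_a$. The paper additionally brings in $x^{(2)}\del_y$, $C$ and $D$, whereas you try to get by with $x\del_y$, $A$, $B$ and Proposition~\ref{info}; your leaner toolkit can in fact be made to work, and several of your extracted relations (the recursion $X\cdot m_{(c_1,c_2)}=(c_1+1)m_{(c_1+1,c_2-1)}$, the interior condition $r_{(c_1+1,c_2)}=(c_1+1)(c_2+1)$, and the resulting $(a_2-1)(a_2-2)=0$ type conditions in the column $a_1=0$) are correct and match what the paper derives. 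But the critical step — the case analysis that pins the support to five points — is both misstated and not actually carried out.

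Concretely, the problem is the row $a_2=p-1$. By Proposition~\ref{info}(4) any nonzero $m_a$ there lies in $k m_1$, so the weight shift is $\lambda(a)_1-\lambda(a)_2=-1$, and Proposition~\ref{info}(2) gives $r_a=-2a_1-\binom{a_1}{2}=0$, whose solutions are $a_1=0$ and $a_1=p-3$ — not $\binom{a_1}{2}=0$ as you assert (that is the $m_2$-type shift $+1$, which cannot occur in this row). So the genuine stray point is $(p-3,p-1)$, it survives every constraint you list when applied directly at that position, and your proposal gives no argument for killing it; "one must separately eliminate stray solutions" is naming the gap, not closing it. The paper eliminates this point using the action of $D$ (the condition $t_a=0$ leads to a contradiction). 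Alternatively, within your framework one can chase the $x\del_y$-coupling: $k^1_{(p-3,p-1)}=(p-2)k^2_{(p-2,p-2)}$ forces a nonzero $m_2$-component at the interior point $(p-2,p-2)$, where your own interior $B$-condition reads $-1=r_{(p-2,p-2)}=(p-2)(p-1)=2$, a contradiction for $p\geq 5$. Either of these completes the argument, but as written the proof does not go through. A secondary (minor) omission: the points $(0,a_2)$ carrying an $m_2$-component for $1\leq a_2\leq p-1$ are not covered by any of the conditions you state explicitly; they are disposed of by the $A$-coefficient $s_{(0,a_2)}=a_2(a_2+1)/2$ together with Proposition~\ref{info}(4) at $(0,p-1)$, and this should be said.
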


\begin{proof}
 Let $v$ be a maximal vector, so we write $v = \sum_{a \in \mathcal{A}} \paren{\del_x'\del_y}^{a} \otimes m_a$ (see Equation~(\ref{basisform})). It is easy to see that each $m_a$ can only be in one given weight space for the $\sl_2$ action, so we have for all $a \in \mathcal{A}$, $m_a = \mu_a m_1$ or $m_a = \mu_a m_2$. As with the one-dimensional case, we refer the reader to Section~\ref{Actions} for the formulae for the actions we will consider here. We do the first one in detail. The others are done similarly.

From $B \cdot v = 0$, we see
\[  0 =   \sum_{a \in \mathcal{A}} r_a\del_x'^{a_1 -1 }\del_y^{a_2} \otimes m_a -   \sum_{a \in \mathcal{A}} a_2 \del_x'^{a_1}\del_y^{a_2-1} \otimes  X \cdot m_a.\]
Thus, for all $a \in \mathcal{A}$, if $m_a = \mu_a m_1 \neq 0$, we conclude that either $a_1 = 0$ or $r_a = 0$. Suppose $a_1 \neq 0$, and so $r_a = 0$, so
\[0 = a _ { 1} (\lambda ( a )_1  - \lambda ( a ) _ { 2}) + a _ { 1} a _ { 2} - \binom{a_1}{2},\]
now, $\lambda ( a )_1  - \lambda ( a ) _ { 2} = -1$ since $m_a$ is in the lowest weight space. Thus,
\[0 = -a _ { 1}  + a _ { 1} a _ { 2} - \binom{a_1}{2},\]
so, as $a_1 \neq 0$, we deduce that $-1 +a_2 - (a_1 -1)/2 = 0$, i.e., that $a_2 = (a_1 +1)/2$.

From $A \cdot v = 0$ we obtain:

\begin{center}
if $m_a = \mu_a m_2 \neq 0$, then either $s_a = 0$ or $a_2 = 0$.  
\end{center}

We use this to conclude that: 

\begin{center}
if $a_2 \neq 0$ and $s_a = 0$, then $ a_1 = (a_2 +1 )/2$.   
\end{center}

From the action of $x \del_y$ we derive that:

\begin{center}
 if $m_a = \mu_a m_1 \neq 0$, then either $a_1 = 0$ or $a_2 = p-1$.\end{center}

Therefore, for such $m_a \neq 0$ with $a_1 \neq 0$, we must have $a_2 = -1 = (a_1 +1)/2$, so $a_1 = p -3$.

From the action of $D$ we see that:

\begin{center}
 if $m_a = \mu_a m_1 \neq 0$, then $a_1 = 0, 1$ or $t_a = 0$.  
\end{center}

 Suppose we have $a_1 \neq 0$ for such $m_a$, then by the above we must have both $a_1 = p-3$ and $t_a = 0$. Now, this implies $a_1 = 0,1$ or $a_2 =(a_1 +1)/3$. Thus, overall we see that $a_2 = (a_1 +1) /3$ in our case, so $a_2 = (p-2)/3 = p-1$, so $p-2 = p-3$, which is not possible.

We conclude that:
\begin{center}
 for $a \in \mathcal A$, if $m_a = \mu_a m_1 \neq 0$ we must have  $a_1 = 0$. \end{center}

Now, using the fact that if $m_a = \mu_a m_2 \neq 0$, then $a_2 = 0$ or $a_1 = (a_2 + 1)/2$, we see that when $a_2 \neq 0$, and $a_1 = 0$, then $a_2 = p-1$.

Thus, 
\begin{align*}
 v = &1 \otimes m_{\paren{0,0}}+ \del_y^{p-1} \otimes m_{\paren{0, p-1}}  \\
&+\sum_{1 \leq a_2 \leq p-2} \del_y^{a_2} \otimes \mu_{\paren{0, a_2}} m_1 + \sum_{\substack{a_1 \neq 0 \\ 0 \leq a_2 \leq p-1}}\del_x'^{a_1}\del_y^{a_2} \otimes \mu_a m_2.
\end{align*}

From $C \cdot v = 0$, we see that $a_1 = p-1$, $m_a = \mu_a m_2 \neq 0$ implies $a_2 = 0$, as well as $m_{\paren{0, p-1}} = \mu_{\paren{0, p-1}} m_1$. 

Thus our maximal vector is of the form:
\begin{align*}
 v = &1 \otimes m_{\paren{0,0}}+ \del_x'^{p-1} \otimes \mu_{\paren{p-1, 0} }m_2  \\
&+\sum_{1 \leq a_2 \leq p-1} \del_y^{a_2} \otimes \mu_{\paren{0, a_2}} m_1 + \sum_{\substack{a_1 \neq 0, p-1 \\ 0 \leq a_2 \leq p-1}}\del_x'^{a_1}\del_y^{a_2} \otimes \mu_a m_2.
\end{align*}

Now, applying $B$ to $v$ yields either $\mu{\paren{p-1,0}} = 0$ or $r_{\paren{p-1,0}} = 0$. It's straightforward to compute that $r_{\paren{p-1,0}} \neq 0$. 

Hence, our maximal vector is of the form:
\begin{align*}
 v = &1 \otimes m_{\paren{0,0}} \\
&+\sum_{1 \leq a_2 \leq p-1} \del_y^{a_2} \otimes \mu_{\paren{0, a_2}} m_1 + \sum_{\substack{a_1 \neq 0, p-1 \\ 0 \leq a_2 \leq p-1}}\del_x'^{a_1}\del_y^{a_2} \otimes \mu_a m_2.
\end{align*}

Regarding the nature of the $\mu_a m_2$, the action of $D$ tells us that if they are non-zero, then either $a_1 = 0,1$ or $a_2 = (a_1 -5)/3$. On the other hand, the action of $B$ tells that if they are non-zero, then either $a_1 = 0, 1, p-1$ or $a_2 = (a_1 -3)/2$.

Hence, assume $\mu_a m_2 \neq 0$ and $a_1 \neq 0,1$. Then we have $a_2 = (a_1 -5 )/3 = (a_1 -3)/2$, since we have already seen that $a_1 \neq p-1$. The previous identity implies $a_1 = p-1$. Thus, we conclude that $a_1 = 0,1$, if $\mu_a m_2 \neq 0$.

Therefore, our maximal vector is of the form:
\begin{align*}
 v = &1 \otimes m_{\paren{0,0}} \\
&+\sum_{1 \leq a_2 \leq p-1} \del_y^{a_2} \otimes \mu_{\paren{0, a_2}} m_1 + \sum_{0 \leq a_2 \leq p-1} \del_x'\del_y^{a_2} \otimes \mu_{\paren{1,a_2}} m_2.
\end{align*}

Applying $C$ again, we see that if $\mu_{\paren{1,a_2}} \neq 0$, then $a_2 = 0, 1$. Thus, we have 
\begin{align*}
 v = &1 \otimes m_{\paren{0,0}} \\
&+\sum_{1 \leq a_2 \leq p-1} \del_y^{a_2} \otimes \mu_{\paren{0, a_2}} m_1 + \del_x' \otimes \mu_{\paren{1,0}} m_2 + \del_x'\del_y \otimes \mu_{\paren{1,1}} m_2.
\end{align*}

We apply $B$ to get that for $a_2 \geq 3$, $\mu_{0,a_2} =0$. Thus, our maximal vector is of the form:
\begin{align*}
 v = &1 \otimes m_{\paren{0,0}} \\
&+ \del_y \otimes \mu_{\paren{0, 1}} m_1 + \del_y^2 \otimes \mu_{\paren{0, 2}} m_1 +\del_x' \otimes \mu_{\paren{1,0}} m_2 + \del_x'\del_y \otimes \mu_{\paren{1,1}} m_2.
\end{align*}

From $X \cdot v = 0$ it is easy to see that $m_{\paren{0,0}} = \mu_{\paren{0,0}} m_2$.

Finally, we see from $A \cdot v = 0$ that
\begin{align*} \mu_{\paren{1,0}} &= \mu_{\paren{0,1}} \\
 \mu_{\paren{1,1}} &= \mu_{\paren{0,2}}.
\end{align*}

Thus, we obtain the resul that the general form for a maximal vector $v$ is indeed:
\[ \mu_1 \paren{1 \otimes m_2} +\mu_2 \paren{ \del_x' \otimes  m_2 +\del_y \otimes  m_1} + \mu_3 \paren{\del_x' \del_y \otimes  m_2 + \del_y^2 \otimes m_1}. \qedhere\]
\end{proof}

We will break up the proof of our determination of the modules induced from two-dimensional modules and their composition factors into several lemmas, as depending on the weight one obtains wildly different structures.

In what follows, we adopt the following shorthand:
 \[w \coloneqq \del_x'\del_y \otimes m_2 + \del_y^2 \otimes m_1\]
 \[v \coloneqq \del_x' \otimes m_2 + \del_y \otimes m_1.\]

We refine the previous proposition into the following:

\begin{prop}
  Let $M \cong L_0(a,b)$, with $a-b =1$. If $u$ is a maximal vector for $Z(M)$, then $u = \mu_1 \paren{1 \otimes m_2}$ or $u = \mu_2 v$ or $u =\mu_3 w$.
\end{prop}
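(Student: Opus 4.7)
The plan is to mimic the weight-comparison argument from the analogous one-dimensional case treated just above. By the preceding proposition, any maximal vector $u$ has the form
\[ u = \mu_1 \paren{1 \otimes m_2} + \mu_2 v + \mu_3 w, \]
so it suffices to show that at most one of $\mu_1, \mu_2, \mu_3$ is nonzero. The strategy is to observe that each of $1 \otimes m_2$, $v$, and $w$ is itself a $T$-weight vector, and that the three resulting weights are pairwise distinct.

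First I would record the $\gl_2$-weights of the eigenbasis: $m_2$ is the highest weight vector and so has weight $\paren{a, a-1}$, while $m_1 = Y \cdot m_2$ has weight $\paren{a-1, a}$. Using the formulae for the actions of $x \del_x$ and $y \del_y$ on vectors $\del_x'^{a_1} \del_y^{a_2} \otimes m$ derived in Section~\ref{Actions}, a direct computation then shows that $1 \otimes m_2$ has weight $\paren{a, a-1}$, that both summands $\del_x' \otimes m_2$ and $\del_y \otimes m_1$ of $v$ have weight $\paren{a-1, a-1}$, and that both summands $\del_x'\del_y \otimes m_2$ and $\del_y^2 \otimes m_1$ of $w$ have weight $\paren{a-1, a-2}$. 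In particular $v$ and $w$ are genuine $T$-weight vectors.

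Since $p \geq 5$, the three weights $\paren{a, a-1}, \paren{a-1, a-1}, \paren{a-1, a-2}$ are pairwise distinct in $\mathbb{F}_p^2$, and the three summands $1 \otimes m_2$, $v$, $w$ are linearly independent in $Z(M)$ by the PBW basis~(\ref{basisform}). Writing $u$ as a weight vector of some weight $\lambda \in \mathbb{F}_p^2$ and comparing coefficients in $x \del_x \cdot u = \lambda_1 u$ will force a dichotomy: either $\lambda_1 = a$, in which case $\mu_2 = \mu_3 = 0$; or $\lambda_1 = a - 1$, in which case $\mu_1 = 0$. In the second case, comparing coefficients in $y \del_y \cdot u = \lambda_2 u$ further forces either $\mu_3 = 0$ (when $\lambda_2 = a - 1$) or $\mu_2 = 0$ (when $\lambda_2 = a - 2$). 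Any $\lambda_1$ outside $\set{a, a-1}$ would give $u = 0$, contradicting the non-zeroness of a maximal vector.

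I do not anticipate any genuine obstacle here: the argument is entirely parallel to the corresponding refinement step in the one-dimensional case proved just above. The only care required is the bookkeeping with the $\gl_2$-weights of $m_1$ and $m_2$ when verifying that the two summands of $v$, respectively of $w$, really do land in the same $T$-weight space.
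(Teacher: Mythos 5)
Your argument is correct and is essentially the paper's own proof: one checks that $1 \otimes m_2$, $v$, and $w$ are $T$-weight vectors of the pairwise distinct weights $(a,a-1)$, $(a-1,a-1)$, $(a-1,a-2)$ and then compares coefficients in $x\del_x \cdot u = \lambda_1 u$ followed by $y\del_y \cdot u = \lambda_2 u$. The weight bookkeeping for $m_1$ and $m_2$ in your proposal matches the paper's.
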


\begin{proof}
  Let $u$ be a maximal vector for $Z(M)$ of weight $\lambda$, so we write \[u = \mu_1 \paren{1 \otimes  m} + \mu_2 v + \mu_3w\]
Now, each of the terms is a weight vector for $x \del_x$ and $y \del_y$. We calculate:
\[ x\del_x \cdot u = \mu_1 a \paren{1 \otimes  m} + \mu_2 \paren{a-1} v + \mu_3 \paren{a-1} w = \lambda_1 u.\]
Thus, by comparing coefficients, we have $\mu_1 a = \lambda_1 \mu_1$, $\mu_2 \paren{a-1} = \lambda_1 \mu_2$, and $\mu_3 \paren{a-1} = \lambda_1 \mu_3$. We conclude that either $\lambda_1 = a$ and $\mu_2 = 0 = \mu_3$ or $\lambda_1 \neq a$ and $\mu_1 = 0$. Therefore, either $u = \mu_1 \paren{1 \otimes  m}$  or $u = \mu_2 v +\mu_3 w$.

Suppose the latter is the case. We calculate:
\[ y\del_y \cdot u =  \mu_2 b v + \mu_3 \paren{b-1} w  = \lambda_2 u.\]
So, by comparing coefficients, we have $\mu_2 b = \lambda_2 \mu_2$, $\mu_3 \paren{b-1} = \lambda_2 \mu_3$. Hence, either $\lambda_2 = b$ and $\mu_3 = 0$ or $\lambda_2 \neq b$ and $\mu_2 = 0$, as required.
\end{proof}

\begin{lem}
In $Z(a,b)$, with $a-b = 1$, the $\widehat{H}$-submodules $\widehat{H}\bracket{v}$ and $\widehat{H}\bracket{w}$ are equal unless one induces from $(1,0)$.
\end{lem}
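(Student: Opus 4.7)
The plan is to exhibit explicit elements of $\lie{u}(\widehat{H})$ that carry $v$ to $w$ and, except when $(a,b) = (1,0)$, carry $w$ back to $v$. I would first verify the easy inclusion $\widehat{H}\bracket{w} \subseteq \widehat{H}\bracket{v}$. By identity (2) of Lemma~\ref{dely} we have $\del_y\del_x' = \del_x'\del_y - x^{\paren{p-1}}\del_y$ in $\lie{u}(\widehat{H})$, and $x^{\paren{p-1}}\del_y \in \widehat{H}_{\paren{1}} \subseteq N$ annihilates $M$. Hence a direct computation gives $\del_y \cdot v = \del_x'\del_y \otimes m_2 + \del_y^2 \otimes m_1 = w$, so $w \in \widehat{H}\bracket{v}$.

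For the reverse containment I would apply $Y$ to $w$ using the general formula for the action of $Y$ derived near the end of Section~\ref{Actions}. Only the coefficients $m_{\paren{1,1}} = m_2$ and $m_{\paren{0,2}} = m_1$ are non-zero, and for $p \geq 5$ neither index triggers the special $a_1 = p-1, p-2$ summands. Using $Y \cdot m_2 = m_1$ and $Y \cdot m_1 = 0$, the formula collapses to
\[ Y \cdot w = \del_x'\del_y \otimes m_1 - \del_x'^2 \otimes m_2 - 2\del_x'\del_y \otimes m_1 = -\del_x'\del_y \otimes m_1 - \del_x'^2 \otimes m_2 = -\del_x' \cdot v.\]

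The crucial step is then to invoke the restricted structure. From the identity $\paren{-\del_x + x^{\paren{p-1}}y\del_y}^{[p]} = y\del_y$ recalled in Section~\ref{prelim}, we have $\paren{-\del_x'}^{[p]} = y\del_y$, so that in $\lie{u}(\widehat{H})$, since $p$ is odd, one obtains $\del_x'^p = -y\del_y$. Applying $\del_x'^{p-1}$ to both sides of $Y \cdot w = -\del_x' \cdot v$ and using that $v$ has $y\del_y$-weight $b$ gives
\[ \del_x'^{p-1} Y \cdot w = -\del_x'^p \cdot v = y\del_y \cdot v = b\, v.\]
Hence whenever $b \neq 0$---equivalently, since $a-b =1$, whenever $(a,b) \neq (1,0)$---we have $v = b^{-1} \del_x'^{p-1} Y \cdot w \in \widehat{H}\bracket{w}$, and therefore $\widehat{H}\bracket{v} = \widehat{H}\bracket{w}$.

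The main potential obstacle is the bookkeeping in the $Y$-action calculation, together with care over the sign in the restricted $p$-power identity; both are mechanical once one commits to the formulas. The failure of equality at $(a,b) = (1,0)$ is expected (since the argument gives $\del_x'^{p-1} Y \cdot w = 0$ there), but confirming that $\widehat{H}\bracket{w}$ is genuinely a proper subspace of $\widehat{H}\bracket{v}$ in that case requires a separate dimension-count for $\widehat{H}\bracket{w}$, which fits naturally into the subsequent structural analysis of the two-dimensional induced modules.
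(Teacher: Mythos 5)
Your proof is correct and follows essentially the same route as the paper: compute $Y\cdot w=-\del_x'\cdot v$ and then hit it with $\del_x'^{p-1}$, the paper doing this by listing the elements $\del_x'^{i}(Y\cdot w)$ explicitly and reading off $-bv$ at $i=p-1$, while you package the same step via $\del_x'^{p}=-y\del_y$ and the $y\del_y$-weight of $v$. Your explicit verification that $w=\del_y\cdot v$ (giving $\widehat{H}\bracket{w}\subseteq\widehat{H}\bracket{v}$) is a small completeness bonus that the paper leaves implicit.
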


\begin{proof}
  Acting on $w$ by powers of $\del_y$ and $\del_x'$ gives that $\widehat{H}\bracket{w}$ contains at least the following:
\[ \set{\del_x'^i \del_y^{j+2} \otimes m_1 + \del_x'^{i+1} \del_y^{j+1} \otimes m_2:  0 \leq i \leq p-1, 0 \leq j \leq p-2},\]
which gives distinct elements as long as $(i,j) \neq (p-1, p-2)$. In such a case, we obtain the element
\[ \del_x'^p \del_y^{p-1} \otimes m_2 = \del_y^{p-1} \otimes \paren{-b-1}m_2,\]
so if $b \neq -1$, we have $\dim_k \widehat{H}\bracket{v} \geq p^2 - p $.

Now, we calculate:
\[ Y \cdot w = -\del_x'\del_y \otimes m_1 - \del_x'^2 \otimes m_2.\]
Hence,
\[ \set{\del_x'^{i+1} \del_y \otimes m_1 + \del_x'^{i+2} \otimes m_2: i \in \set{0,1 \ldots, p-1 }}\] 
is contained in $\widehat{H}\bracket{w}$.

More specifically, when $i= p-2$, this gives the element
\[ \del_x'^{p-1} \del_y \otimes m_1 - b \cdot 1 \otimes m_2\]
and when $i= p-1$ the element
\[ \del_y \otimes -b m_1 + \del_x' \otimes -b m_2,\]
noting that $x \del_x $ and $y \del_y$ have weights of $b = a -1$ and $b +1$  on the lower-weight space $k\bracket{m_1}$, respectively.

Then if $b \neq 0$, then we see that $\widehat{H}\bracket{w}$ contains $v$.

Hence, in such a case, $\widehat{H}\bracket{w} = \widehat{H} \bracket{v}$.
\end{proof}

\begin{lem}
  We have 
\begin{align*}
 \widehat{H} \bracket{w} &= k \bracket{\del_x'^i \del_y^{j+2} \otimes m_1 + \del_x'^{i+1} \del_y^{j+1} \otimes m_2:  0 \leq i \leq p-1, 0 \leq j \leq p-2} \\
 &\oplus k\bracket{\del_x'^{i+1} \del_y \otimes m_1 + \del_x'^{i+2} \otimes m_2: i \in \set{0,1 \ldots, p-1 }}
\end{align*}
\[ \widehat{H} \bracket{v} = k\bracket{\del_x'^i \del_y^{j+1} \otimes m_1 + \del_x'^{i+1} \del_y^j \otimes m_2:  0 \leq i \leq p-1, 0 \leq j \leq p-1}, \]
as vector spaces, and so $\dim_k \widehat{H} \bracket{v} = p^2$ unless $(a,b) = (0, -1)$, in which case $\dim_k \widehat{H} \bracket{v} = p^2 -1$; if $(a,b) = (1,0)$, $\dim_k \widehat{H} \bracket{w} = p^2 -1$.
\end{lem}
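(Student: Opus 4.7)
The plan is to establish the two displayed equalities in three phases and then count dimensions.

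First, I would verify that the proposed spanning sets lie in the respective cyclic submodules. Starting from $v = \del_x' \otimes m_2 + \del_y \otimes m_1$, iterated left multiplication by $\del_x'$ and $\del_y$ (both elements of $\widehat{H}$) produces every element $\del_x'^i \del_y^{j+1} \otimes m_1 + \del_x'^{i+1} \del_y^j \otimes m_2$ for $0 \le i, j \le p-1$. For $w$, the same iteration yields the first family, while the second family is obtained from the identity $Y \cdot w = -(\del_x' \del_y \otimes m_1 + \del_x'^2 \otimes m_2)$ computed in the preceding lemma, together with further left multiplication by $\del_x'$.

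Second, to conclude that these spans equal the cyclic submodules, I would verify closure under the Lie-algebra generators $\mathcal{G}$ of $\widehat{H}$ (augmented by $J$ when $p=5$). The toral elements $x\del_x$ and $y\del_y$ act on each proposed basis element by a common scalar, since $a - 1 = b$ forces the weights of the two summands to coincide. Closure under $\del_x'$ and $\del_y$ is immediate modulo the restricted-power reductions $\del_y^{[p]} = 0$ and $(\del_x')^{[p]} = -y\del_y$. For $x\del_y$, $x^{(p-1)}\del_y$, $A$, $C$, $Y$ (and $J$ when $p=5$), I would substitute the proposed basis elements into the action formulae derived in Section~\ref{Actions} and collect terms, using $X \cdot m_1 = m_2$, $Y \cdot m_2 = m_1$, $X \cdot m_2 = 0 = Y \cdot m_1$, together with $x\del_x \cdot m_1 = b m_1$ and $y\del_y \cdot m_1 = a m_1$.

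Third, to compute dimensions, I would rewrite each listed generator in the PBW form of~(\ref{basisform}) and check linear independence. For $\widehat{H}\bracket{v}$, the only non-trivial reduction occurs at $(i,j)=(p-1,p-1)$, where the generator collapses via Lemma~\ref{dely}(4) and $(\del_x')^{[p]} = -y\del_y$ to $(-b-1)\del_y^{p-1}\otimes m_2$; this vanishes precisely when $b = -1$, giving $\dim_k \widehat{H}\bracket{v} = p^2 - 1$ exactly when $(a,b) = (0,-1)$ and $p^2$ otherwise. For $\widehat{H}\bracket{w}$, the $p^2$ formal generators (namely $p(p-1)$ from the first family and $p$ from the second) similarly span a subspace whose only boundary collapse occurs at $i = p-1$ in the second family, where the element reduces to $-bv$; this vanishes precisely when $(a,b)=(1,0)$, accounting for the dimension $p^2 - 1$ in that case.

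The main obstacle will be the phase-two verification for $A$, $C$, and (when $p=5$) $J$, whose action formulae in Section~\ref{Actions} split into multiple summations indexed by the boundary strata $a_1 \in \{p-2, p-1\}$ or $a_2 \in \{p-2, p-1\}$; patient bookkeeping of the correction terms involving $X \cdot m_a$ and $Y \cdot m_a$ at these strata is required to confirm that the output of each generator lies in the proposed span, and the degenerate reductions at $(a,b) = (0,-1)$ and $(1,0)$ must be tracked consistently with the dimension counts.
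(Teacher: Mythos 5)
Your proposal is correct and follows essentially the same route as the paper: generate the spanning sets by iterated application of $\del_y$, $\del_x'$ (and $Y$ for $w$), identify the boundary reductions via $\del_y^{[p]}=0$ and $\del_x'^{[p]}=-y\del_y$ (giving $(-b-1)\del_y^{p-1}\otimes m_2$ at $(p-1,p-1)$ for $v$ and $-bv$ at $i=p-1$ in the second family for $w$), and then check closure under the generators. If anything your plan is more complete than the paper's proof, which treats only $\widehat{H}\bracket{v}$ and defers both the $w$ case and the closure verification to the reader; the one small imprecision is that the first family for $w$ also degenerates at $(i,j)=(p-1,p-2)$, though since $b=0\neq-1$ in the $(1,0)$ case that reduction still yields a nonzero independent vector and does not affect the stated dimension.
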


\begin{proof}
 We only study  $\widehat{H}\bracket{v}$ and leave the other case to the interested reader, noting that one must only check $\widehat{H}\bracket{w}$ when $b \neq 0$, i.e., in $Z(1,0)$. Now, $\widehat{H}\bracket{v}$ certainly contains
\[ \set{\del_y^{j+1} \otimes m_1 + \del_x' \del_y^j \otimes m_2:  0 \leq j \leq p-1},\]
using Lemma~\ref{dely}.

By letting $\del_x'$ act on each the elements of the previous set we obtain:
\[ \set{\del_x'^i \del_y^{j+1} \otimes m_1 + \del_x'^{i+1} \del_y^j \otimes m_2:  0 \leq i \leq p-1, 0 \leq j \leq p-1},\]
which gives distinct elements as long as $(i,j) \neq (p-1, p-1)$. In such a case, we obtain again the element
\[ \del_x'^p \del_y^{p-1} \otimes m_2 = \del_y^{p-1} \otimes \paren{-b-1}m_2,\]
so if $b \neq -1$, we have $\dim_k \widehat{H}\bracket{v} \geq p^2$. We leave it to the reader to use the basis for $\widehat{H}$ to check that the above $k$-basis is indeed closed under the action of $\widehat{H}$.
\end{proof}



\begin{thm}\label{twodim}
  The induced module $Z(M) \cong Z(a,b)$, where $a-b =1$, is not simple. If $(a,b) = (p-1, p-2)$ or $(1,0)$, then $Z(a,b)$ has composition factors of dimension $1, p^2-1$ and $p^2$. If $(a,b) = (0,-1)$, then $Z(a,b)$ has two one-dimensional composition factors and two composition factors of dimension $p^2-1$. In the remaining cases $Z(a,b)$ has two composition factors of dimension $p^2$.
\end{thm}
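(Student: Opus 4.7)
The plan is to determine the submodule lattice of $Z(a,b)$ using two inputs: the complete classification of maximal vectors (scalar multiples of $1 \otimes m_2$, $v$, or $w$) and the dimension formulas for $\widehat{H}\bracket{v}$ and $\widehat{H}\bracket{w}$ established in the preceding lemmas. Non-simplicity in every case follows immediately from the fact that $\widehat{H}\bracket{v}$ is a proper nonzero submodule of dimension at most $p^2 < 2p^2 = \dim Z(a,b)$.

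For the generic case $(a,b) \notin \{(0,-1), (1,0), (p-1,p-2)\}$ I would show that the submodule lattice is $0 \subset \widehat{H}\bracket{v} \subset Z(a,b)$ with both factors simple of dimension $p^2$. Simplicity of $\widehat{H}\bracket{v}$ comes from Lemma~\ref{maxvec}: its maximal vectors are those of $Z(a,b)$ lying inside it, and since $1 \otimes m_2$ generates all of $Z(a,b)$, only scalar multiples of $v$ and $w$ remain; both generate $\widehat{H}\bracket{v}$ because $\widehat{H}\bracket{v}=\widehat{H}\bracket{w}$ by the preceding lemma. For the quotient one argues similarly that the only candidate maximal vector comes from $1 \otimes m_2$, so it is simple of dimension $p^2$ and isomorphic to $L(a,b)$.

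For $(a,b) = (1,0)$, the preceding lemma gives $\widehat{H}\bracket{w} \subsetneq \widehat{H}\bracket{v}$ with dimensions $p^2-1$ and $p^2$, producing the filtration $0 \subset \widehat{H}\bracket{w} \subset \widehat{H}\bracket{v} \subset Z(1,0)$ whose successive quotients have dimensions $p^2-1, 1, p^2$, each simple by the same maximal vector argument and yielding the advertised composition factors. The two remaining cases, $(0,-1)$ and $(p-1,p-2)$, are where the real work lies: the quotient $Z(a,b)/\widehat{H}\bracket{v}$ is no longer simple, and further composition factors must be identified. For $(p-1,p-2)$, $\widehat{H}\bracket{v}$ has dimension $p^2$ and is isomorphic to $L(-2,-2) = Z(-2,-2)$ by the one-dimensional induction analysis, while the quotient of dimension $p^2$ has top $L(-1,-2)$ of dimension $p^2-1$, leaving a trivial composition factor. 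For $(0,-1)$, $\widehat{H}\bracket{v}$ has dimension $p^2-1$ and is isomorphic to $L(-1,-1) \cong O(2;(1,1))/(k \cdot 1)$ via Lemma~\ref{O-module}; the quotient of dimension $p^2+1$ has top $L(0,-1)$ of dimension $p^2-1$ and a remaining two-dimensional piece which must split as two copies of the trivial module.

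The main obstacle is identifying these new composition factors in the $(0,-1)$ and $(p-1,p-2)$ cases, since the maximal vectors of $Z(a,b)$ itself account only for $\widehat{H}\bracket{v}$. I would pin them down by combining three ingredients: Frobenius reciprocity to identify $L(a,b)$ as the top of the quotient, the explicit models and dimensions for $L(-1,-1)$, $L(0,-1)$, and $L(0,0)$ obtained from the one-dimensional induction analysis, and dimension bookkeeping relying on the fact that no simple restricted $\widehat{H}$-module has dimension $2$ and that $L(0,0)$ is the only one-dimensional simple. Together these force the leftover composition factors in each exceptional case.
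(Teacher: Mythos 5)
Your overall route is the paper's: the key objects are $\widehat{H}\bracket{v}$ and $\widehat{H}\bracket{w}$, their dimensions from the preceding lemmas, and Frobenius reciprocity plus the one-dimensional analysis to name the factors. But one step fails as written: you deduce simplicity of the quotients (e.g.\ $Z(a,b)/\widehat{H}\bracket{v}$ in the generic case, and the tops in the $(1,0)$, $(0,-1)$, $(p-1,p-2)$ cases) from the classification of maximal vectors of $Z(a,b)$ itself, saying ``the only candidate maximal vector comes from $1\otimes m_2$''. Maximal vectors do pass from a submodule to the ambient module --- so your argument for the simplicity of $\widehat{H}\bracket{v}$ is fine --- but they do not pass to quotients: a coset $u + \widehat{H}\bracket{v}$ is a maximal vector of the quotient as soon as $N\cdot u\subseteq \widehat{H}\bracket{v}$, which is strictly weaker than $N\cdot u = 0$, so the quotient can (and in the exceptional cases visibly does) have maximal vectors that are not images of maximal vectors of $Z(a,b)$. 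The paper instead obtains simplicity of the generic quotient by Frobenius reciprocity: $Z(a,b)$ maps onto $Z(a,a)$, already known to be simple of dimension $p^2$ when $a\neq 0,-1$, and since $\dim_k Z(a,b)/\widehat{H}\bracket{v} = p^2$ the quotient must be this simple module. You should run that argument --- which you already invoke for the exceptional cases --- in the generic and $(1,0)$ cases as well.

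A secondary point: in the $(0,-1)$ and $(p-1,p-2)$ cases your bookkeeping (``top $L(0,-1)$ plus a two-dimensional leftover which must be two trivials'') presupposes that the simple quotient of $Z(a,b)$ supplied by Frobenius reciprocity factors through $Z(a,b)/\widehat{H}\bracket{v}$, i.e.\ that $\widehat{H}\bracket{v}$ lies in the radical; this holds, but it rests on the uniqueness of the maximal submodule of an induced module (or on first knowing $L(-1,-1)\not\cong L(0,-1)$, which the paper only proves later), and should be said. The paper sidesteps this by exhibiting the leftover factors explicitly: $\theta=\del_y^{p-1}\otimes m_2$ and $\varphi = \del_x'^{p-1}\otimes m_1$ span a two-dimensional submodule of $Z(0,-1)/\widehat{H}\bracket{v}$ (with the action of $J$ checked separately when $p=5$), and $\gamma = \del_x'^{p-1}\del_y^{p-2}\otimes m_2$ spans a one-dimensional submodule of $Z(-1,-2)/\widehat{H}\bracket{v}$; the remaining quotients are then identified by Frobenius reciprocity and dimension count. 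Your abstract replacement (no two-dimensional simple module, $L(0,0)$ the only one-dimensional simple) does determine the composition factors once the factorisation issue is settled, but the explicit vectors are also what later yield the Alperin diagram of $Z(0,-1)$, so they are worth producing in any case.
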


\begin{proof}
First, we calculate that the vector $v$ has weight $\lambda = (a-1, b)$. The vector $w$ has weight, $\lambda = (a-1, b-1)$.

We start by outlining a basic Frobenius reciprocity argument that takes care of lots of cases.

We have by Frobenius reciprocity that
\[ \Hom_{\widehat{H}_{(0)}}(L_0(a, b), Z(a,a)) \cong \Hom_{\widehat{H}}(Z(a, b), Z(a,a)) .\]

The left side is non-zero as $Z(a,a)$ has a maximal vector of highest weight $(a,a-1) = (a,b)$, as we saw previously. Thus there is a non-zero $\widehat{H}$-homomorphism \[f: Z(a,b) \longrightarrow Z(a,a).\] Now, if $a \neq 0, -1$, we know that $Z(a,a)$ is simple, of dimension $p^2$, and thus that $f$ must be surjective. 

Hence, $Z(a,b)$ has a $p^2$-dimensional simple quotient isomorphic to $Z(a,a) = L(a,a)$ if $(a,b) \neq (0,-1), (-1,-2)$.

We start with the general case $Z(a,b)$, where $(a,b) \neq (1,0), (0,-1), (-1,-2)$. Here we have $\widehat{H}\bracket{w} = \widehat{H}\bracket{v} \leq Z(a,b)$ of dimension $p^2$, and simple, as the submodule is generated by its maximal vectors $v$ and $w$. It is isomorphic to $L(a-1,b) = Z(a-1,b)$. By the above, and by consideration of dimensions, the quotient $Z(a,b)/\widehat{H}\bracket{v}$ is simple and we call it $L(a,b)$. Thus, we have found all the composition factors: \[[L(a-1,b), L(a,b)],\] both of dimension $p^2$. 

Note: since $w$ is also a maximal vector of weight $(a-1, b-1)$, $\widehat{H}\bracket{v} = \widehat{H}\bracket{w}$ can be viewed as a simple $p^2$-dimensional $\widehat{H}$-module, and we have then $L(a-1,b) \cong L(a-1, b-1)$, noting that $(a-1, b-1) \neq (0,-1), (-1,-2), (-2,-3)$, so this isomorphism is not a problem as if $(a-1, b-1) \neq (1,0)$, we are guaranteed that $L(a-1, b-1)$ is the $p^2$-dimensional quotient of $Z(a-1, b-1)$, and if $(a-1, b-1) = (1,0)$, we are in the case $(2,1)$, and the statement says, $L(1,1) \cong L(1,0)$, where $L(1,1) = Z(1,1)$  is a $p^2$-dimensional simple module, and $L(1,0)$ is the $p^2$-dimensional quotient of $Z(1,0)$ we find below.

Consider now the induced module $Z(1,0)$. It has the submodule $\widehat{H}\bracket{v}$  of dimension $p^2$ inside it. The quotient $Z(1,0) / \widehat{H}\bracket{v}$ must be simple, by the above argument and by consideration of dimensions. We call this quotient $L(1,0)$. Now, $\widehat{H}\bracket{v}$ has the $(p^2 -1)$-dimensional submodule $\widehat{H}\bracket{w}$, which is simple, and of weight $(0,-1)$, so by Frobenius reciprocity, we see that $\widehat{H} \bracket{w} \cong L(0,-1)$. The quotient $\widehat{H}\bracket{v}/L(0,-1)$ is one-dimensional, and so simple and isomorphic to $L(0,0)$. Thus we have all the composition factors: \[[L(0,0), L(0,-1), L(1,0)],\] of dimensions $1, p^2-1$, and $p^2$, respectively. Note that $\widehat{H}\bracket{v}$  has a maximal vector of highest weight $(0,0)$, and from the above, $\widehat{H}\bracket{v} \cong Z(0,0)$.

Now we study $Z(0,-1)$. Here we have $\widehat{H}\bracket{w} = \widehat{H}\bracket{v} \leq Z(a,b)$ of dimension $p^2-1$, and simple, as the submodule is generated by its maximal vectors $v$ and $w$, so we have $\widehat{H}\bracket{v} \cong L(-1,-1) \cong L(-1,-2)$.

Note: The previous is not a problem, as we will see that $L(-1,-2)$ is the $(p^2-1)$-dimensional simple quotient of $Z(-1,-2)$, and $L(-1,-1)$ is the $(p^2-1)$-dimensional simple quotient of $Z(-1,-1)$.

 We turn our attention to the  quotient $Z(0,-1) / \widehat{H}\bracket{v}$. There are two vectors not in $\widehat{H}\bracket{v}$,
\[ \theta \coloneqq \del_y^{p-1} \otimes m_2 \]
\[ \varphi \coloneqq \del_x'^{p-1} \otimes m_1\]
with the following property:
$\widehat{H} \cdot \eta \in \widehat{H}\bracket{v}$
(so in particular, $x\del_x$ and $y\del_y$ have weight $(0,0)$ on them in the quotient). Note here one \emph{must} calculate $J \cdot \eta$ to handle the characteristic $ p = 5$ case. Thus there is a two-dimensional submodule $k\bracket{\theta, \varphi} \leq Z(0,-1) / \widehat{H} \bracket{v}$. The quotient here is $(p^2 -1)$-dimensional. By Frobenius reciprocity, we have that $Z(0,-1)$ must have a $(p^2-1)$-dimensional simple quotient isomorphic to $L(0,-1) \subseteq Z(0,0)$, where $ L(0,-1) = \widehat{H} \bracket{\del_y \otimes m}$. By consideration of dimensions, the above quotient has to be this one. It remains to decompose the module $k\bracket{\theta, \varphi}$, but this has just a one-dimensional simple submodule with a one-dimensional simple quotient. Thus the compositions factors are: 
\[ [L(-1,-1), L(0,-1), L(0,0), L(0,0)],\]
the first two of dimension $p^2-1$ and the last two one-dimensional.

Finally, we have $Z(-1,-2)$. As above, we have $\widehat{H}\bracket{w} = \widehat{H}\bracket{v} \leq Z(a,b)$ of dimension $p^2$, and simple, as the submodule is generated by its maximal vectors $v$ and $w$. Here we have $\widehat{H}\bracket{v} \cong L(-2,-2) \cong L(-2,-3)$.

Note: Again, the above isomorphism is not a problem, as $L(-2,-2) = Z(-2,-2)$ is a $p^2$-dimensional simple $\widehat{H}$-module and $L(-2,-3)$ is the $p^2$-dimensional simple quotient of $Z(-2,-3)$.

By Frobenius reciprocity,
\[ \Hom_{\widehat{H}_{(0)}}(L_0(-1, -2), M) \cong \Hom_{\widehat{H}}(Z(-1, -2), M) .\]
If we take $M$ to be the $(p^2-1)$-dimensional simple submodule of $Z(0,-1)$,  we see that the left side is non-zero because $M$ has a maximal vector $v$ of weight $(-1,-2)$. Thus the right hand is non-zero, and so $Z(-1,-2)$ surjects onto $M$, as $M$ is simple. Hence, we have shown that $Z(-1, -2)$ has a $(p^2 -1)$-dimensional simple quotient. Indeed, we can argue that $Z(-1,-2)/\widehat{H}\bracket{v}$ has a one-dimensional submodule. The vector $\gamma \coloneqq \del_x'^{p-1}\del_y^{p-2} \otimes m_2 \notin \widehat{H}\bracket{v}$ is such that $\widehat{H} \cdot \gamma \subseteq \widehat{H}\bracket{v}$. The quotient of $Z(-1,-2)/\widehat{H}\bracket{v}$ by this one-dimensional submodule $k\bracket {\gamma}$ must then be the $(p^2-1)$-dimensional simple quotient above, so it must be $L(-1,-2)$. Thus, we have the composition factors:
\[ [L(-2,-2), L(0,0), L(-1,-2)],\]
of dimensions, $p^2, 1$, and $p^2 -1$, respectively.
\end{proof}

\begin{rem}
  All the composition factors of modules induced from two-dimensional modules are isomorphic to simple quotients of modules induced from one-dimensional induced modules except for $L(0,-1)$.  More precisely, we have for all $(a, b) \in \mathbb{F}_p^2$ such that $a-b =1$:
\[ L(a,b) \cong L(a, b+1),\]
except when $(a,b) = (0,-1)$, in which case $L(0,-1)$ is still isomorphic to a composition factor of a module induced from a one-dimensional induced module, more precisely $L(0,-1) \cong \widehat{H}\bracket{\del_y \otimes m} \leq Z(0,0)$.

We will later see that $L(0,-1)$ is not isomorphic to $L(-1,-1)$.

Furthermore, the proof of Theorem~\ref{twodim} in fact shows that the Alperin diagram (see \cite{Alperin}) of $Z(0,-1)$ is
\[ \xymatrix@C=0.9em @R=1.8em{
 &L(0,-1) \ar[ld] \ar[rd] \\ 
 L(0,0) \ar[rd] & & L(0,0) \ar[ld] \\
& L(-1,-1) & }\] Hence,  we have
\[\dim_k\Ext^1( k, L(0,-1)), \dim_k\Ext^1( k, L(-1,-1)) \geq 2.\]  

\end{rem}

\subsection{Higher-dimensional induced modules}

\begin{prop}
  Let $M \cong L_0(a,b)$, with $p-1 \geq a-b=n \geq 2$, then any maximal vector $v$ for $Z(M)$ has the general form \[ \mu \paren{1 \otimes m_{n+1}},\] where $k\bracket{m_1, m_2, \ldots, m_{n+1}} = M$ and $X \cdot m_{n+1} = 0$.
\end{prop}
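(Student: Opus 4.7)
The plan is to extend the case analyses already carried out for the one- and two-dimensional cases, working in the expansion $v = \sum_{a \in \mathcal{A}} (\del_x'\del_y)^a \otimes \mu_a m_{j(a)}$ where each $m_a$ lies in a one-dimensional $\sl_2$-weight space of $M$. Since $x\del_x \cdot m_j = (b+j-1)m_j$ and $y\del_y \cdot m_j = (a-j+1)m_j$, the equations $x\del_x \cdot v = \lambda_1 v$ and $y\del_y \cdot v = \lambda_2 v$ force, at every site with $\mu_a \neq 0$, the double congruence
\[
j(a) \;\equiv\; \lambda_1 - b + 1 + a_1 \;\equiv\; a + 1 - \lambda_2 - a_2 \pmod{p},
\]
equivalently $a_1 + a_2 \equiv a + b - \lambda_1 - \lambda_2 \pmod{p}$. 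Thus the support of $v$ is confined to at most two anti-diagonals of $\mathcal{A}$, and moving one unit rightward along an anti-diagonal raises $j(a)$ by one.

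Next, I would invoke the refined Proposition of Section~\ref{sec:sl2} together with the propagation identity coming from $x\del_y \cdot v = 0$, namely $X \cdot m_c = (c_1+1)\,m_{(c_1+1, c_2-1)}$ for $c_1 \leq p-2$, $c_2 \geq 1$. For the weight $\lambda = (a, b)$ the admissible anti-diagonals are $s = 0$ (containing only $(0,0)$) and $s = p$ (containing $(1, p-1), \ldots, (p-1, 1)$). Among the $s = p$ sites only those with $a_1 \in \{p-n, \ldots, p-1\}$ have $j(a) \in \{1, \ldots, n+1\}$ and so could in principle carry a nonzero $\mu_a$. However, the extremal site $(p-1, 1)$ demands $j = n+1$ (from $a_1 = p-1$) while the weight congruence there gives $j \equiv n \pmod{p}$; since $n+1 \not\equiv n \pmod{p}$, this forces $\mu_{(p-1,1)} = 0$. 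Backward propagation along the anti-diagonal then kills each $\mu_{(p-n+i, n-i)}$ in turn, leaving only the site $(0, 0)$ with $j = n+1$, i.e.\ $v = \mu(1 \otimes m_{n+1})$.

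To rule out any other weight $\lambda$, I would combine the same weight-space and extremal constraints with the cancellation equations $B \cdot v = 0$ and $A \cdot v = 0$. For the two candidate weights $\lambda = (a-1, b)$ and $\lambda = (a-1, b-1)$ that produced the exceptional vectors $v, w$ in the two-dimensional case, the $x\del_y$-recursion forces $\mu_{(0,1)} = \mu_{(1,0)}$ and $\mu_{(0,2)} = \mu_{(1,1)}$ respectively, and then $B \cdot v = 0$ requires $r_{(1,0)} = 1$ and $r_{(1,1)} = 2$; the formula $r_a = a_1(\lambda(a)_1 - \lambda(a)_2) + a_1 a_2 - \binom{a_1}{2}$ gives $r_{(1,0)} = n$ and $r_{(1,1)} = n + 1$, so each identity forces $n = 1$, contrary to hypothesis. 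The main obstacle will be carrying out the analogous exclusion for every remaining weight $\lambda$, verifying in each case that the coefficients $r_a, s_a, t_a$ from Section~\ref{Actions} fail to vanish at the critical propagation endpoints when $n \geq 2$. This is precisely where the hypothesis $n \geq 2$ is essential: it breaks the small-$n$ numerical coincidences that supported the extra maximal vectors in the $n = 0, 1$ cases.
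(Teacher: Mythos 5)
Your organizing idea --- first using the $x\del_x$- and $y\del_y$-eigenvalue equations to confine the support of $v$ to the anti-diagonals $a_1+a_2\equiv a+b-\lambda_1-\lambda_2 \pmod p$, and then running the $x\del_y$-propagation $X\cdot m_c=(c_1+1)\,m_{(c_1+1,c_2-1)}$ backward from the extremal sites --- is sound and genuinely different from the paper's proof, which never fixes $\lambda$ in advance and instead extracts all support constraints from the cancellation patterns in $A\cdot v$, $B\cdot v$, $C\cdot v$, $D\cdot v$ and $x^{(i)}\del_y\cdot v$. Your treatment of the weight $(a,b)$ is complete and correct, as are the computations $r_{(1,0)}=n$ and $r_{(1,1)}=n+1$ excluding the weights $(a-1,b)$ and $(a-1,b-1)$ when $n\geq 2$.

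As written, however, this is not yet a proof: the exclusion of the remaining $p^2-3$ weights is deferred, and that is where essentially all of the content lies, because for \emph{every} weight the steps you have executed leave a nonzero one-parameter candidate. Indeed, for each $(\alpha,\beta)\in\mathcal{A}$ the weight $\lambda=(a-\alpha,b-\beta)$ has exactly one site of $\mathcal{A}$ carrying $j=n+1$, namely $(\alpha,\beta)$, and the surviving candidate is supported on the run $(\alpha-i,\beta+i)$, $0\leq i\leq L\coloneqq\min(n,\alpha,p-1-\beta)$, with $m_{(\alpha-i,\beta+i)}\propto m_{n+1-i}$ and $\mu_{i+1}=(\alpha-i)\mu_i$; all the weight, extremal and propagation constraints are satisfied. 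Moreover, finishing is not merely a matter of checking that $r_a,s_a,t_a$ ``fail to vanish at the endpoints.'' What does work is: $B\cdot v=0$ forces $r_{(\alpha-i,\beta+i)}=(\beta+i+1)(\alpha-i)$, i.e.\ $(\alpha-i)(2n-1-\alpha+i)=0$, for every $0\leq i\leq L$; since for $n\geq2$ two consecutive values of $\alpha-i$ cannot both lie in $\{0,2n-1\}$ mod $p$, this gives $L=0$ and $\alpha\in\{0,2n-1\}$. Then $A\cdot v=0$ kills $\alpha=2n-1$ (its $Y$-term contributes $(2n-1)n\,\del_x'^{2n-2}\del_y^{p-1}\otimes m_n$, and $n\neq 0$) and leaves $\beta\in\{0,1-2n\}$ when $\alpha=0$, after which the $Y$-term of $C\cdot v=0$, namely $-\beta n\,\del_y^{\beta-1}\otimes m_n$, forces $\beta=0$. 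With that supplement your route closes and is arguably cleaner than the paper's; without it there is a genuine gap.
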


\begin{proof}
  
We recall here the general  setup for restricted $\widehat{H}_0$-modules $M$: 

We pick an eigenbasis $\set{m_1, m_2, \ldots, m_{n+1}}$. With this eigenbasis we have
\[ X \cdot m_i = m_{i+1},\]
where $X \cdot m_{n+1} = 0$, and
\[ Y \cdot m_i = (i-1) \paren{n-i+2} m_{i-1},\]
noting again that $Y \cdot m_1 = 0$.

Let $v = \sum_{a \in \mathcal{A}} \paren{\del_x' \del_y}^a \otimes m_a$ be a maximal vector. As with the lower-dimensional cases, each $m_a$ can only be in one given weight space for the $\sl_2$ action, so one has, for all $a \in \mathcal{A}$:
\[ m_a = \mu_a m_k,\]
with $k \in \set{1, \ldots, n+1}$.


Arguing as before, from $A \cdot v = 0$ one gets that:

\begin{center}
 if $m_a = \mu_a m_{n+1} \neq 0$, then either $a_2 = 0 $ or $a_1 = \frac{a_2 + 2n -1}{2}$.  
\end{center}

From $B \cdot v = 0$ one gets that:

\begin{center}
 if $m_a = \mu_a m_{1} \neq 0$, then either $a_1 = 0 $ or $a_2 = \frac{a_1 + 2n -1}{2}$.  
\end{center}

From $x\del_y \cdot v = 0$, we see that:

\begin{center}
 if $m_a = \mu_a m_1\neq 0$, then either $a_1 = 0$ or $a_2 = p-1$.  
\end{center}

Suppose $m_a = \mu_a m_1 \neq 0$ and $a_1 \neq 0$. Then \[a_2 = p-1 =\frac{a_1 + 2n -1}{2}.\] 

This gives that $a_1 = -1 -2n$.

From the action of $D$ together with the previous, we see that:

\begin{center}
 if $m_a = \mu_a m_1 \neq 0$, then $a_1 = 0,1$.  
\end{center}
 If $a_1 \neq 0$, this case also implies that:

 \begin{center}
 if $m_a = \mu_a m_1 \neq 0$ then $a_1 = 1$ and $a_2 = p-1$ and $n = p-1$.   
 \end{center}

 We also deduce that:

 \begin{center}
    if $m_a = \mu_a m_2 \neq 0$, then $a_1 = 0,1$ or $a_2 = \frac{a_1-2}{3} + n-2$, \end{center}
provided one is not in the $n=p-1$ case. But, in fact we can improve this by considering the action of $x^{\paren{2}} \del_y$ too, which gives:

\begin{center}
 if $m_a = \mu_a m_2 \neq 0$ we have either $a_1 = 0,1$ or $a_2 = p-1$. 
\end{center}

 So, if one is in the $a_1 \neq 0,1$ case we have $p-1 = \frac{a_1-2}{3} + n-2$, which implies $a_1 = 5 - 3n$, again, provided one is not in the $n = p-1$ case.  We now consider what happens in the $n = p-1$ in the above when we consider the non-zero $m_a = \mu_a m_2$. For that case we see that we are not allowed to conclude what we have if $a = (2, a_2)$.

Summarising:

\begin{center}
If $m_a = \mu_a m_2 \neq 0$, then $a_1 = 0,1$ or $a_1=2$ or $a= (5-3n, p-1) $.  
\end{center}

 Write $\tau = (5-3n, p-1)$. 

We write our maximal vector
\begin{align*}
 v = &\sum_{\substack{0 \leq a_2 \leq p-1 }}  \del_y^{a_2} \otimes m_{\paren{0,a_2}} +  \sum_{\substack{0 \leq a_2 \leq p-2 }}  \del_x '\del_y^{a_2} \otimes \underbrace{m_{\paren{1,a_2}}}_{\geq \mu_a m_2} + \del_x' \del_y^{p-1} \otimes m_{\paren{1,p-1}} \\ 
&+\sum_{\substack{0 \leq a_2 \leq p-1 }}  \del_x'^2\del_y^{a_2} \otimes \underbrace{m_{\paren{2,a_2}}}_{\geq \mu_a m_2}  + \sum_{\substack{3 \leq a_1 \leq p-1 \\ 0 \leq a_2 \leq p-1 \\ a \neq \tau }}   \del_x'^{a_1}\del_y^{a_2} \otimes \underbrace{m_a}_{\geq \mu_ a m_3} + \paren{\del_x'\del_y}^\tau \otimes m_\tau.
\end{align*}

By Proposition~\ref{info}, we know that $Y \cdot m_a = 0$ if $a_2 = p-1$. Thus, $m_{\tau} =  \mu_{\tau} m_1$ and $m_{\paren{1, p-1}} = \mu_{\paren{1, p-1}} m_1$.

Acting on our maximal vector by $x^{\paren{2}}\del_y$ again, we see that the $\del_y^{p-1} \otimes \mu_{\paren{{1,p-1}}} m_2$ term can only cancel with the term $\del_y^{p-1} \otimes m_{\paren{2,p-1}}$. But in fact, $ m_{\paren{2,p-1}} =  \mu_{\paren{2,p-1}} m_1$, and so no cancellation can occur, and we conclude
\[ m_{\paren{1,p-1}} = 0 = m_{\paren{2,p-1}}.\]

Now, since $m_{\tau} =  \mu_{\tau} m_1$, we see from the previous that we must have $\tau = (0, p-1)$ or $\tau = (1, p-1)$. Thus, we can write



\begin{align*}
 v = &\sum_{\substack{0 \leq a_2 \leq p-1 }}  \del_y^{a_2} \otimes m_{\paren{0,a_2}} +  \sum_{\substack{0 \leq a_2 \leq p-2 }}  \del_x '\del_y^{a_2} \otimes \underbrace{m_{\paren{1,a_2}}}_{\geq \mu_a m_2} \\
&+\sum_{\substack{0 \leq a_2 \leq p-2 }}  \del_x'^2\del_y^{a_2} \otimes \underbrace{m_{\paren{2,a_2}}}_{\geq \mu_a m_2}  + \sum_{\substack{3 \leq a_1 \leq p-1 \\ 0 \leq a_2 \leq p-2}}   \del_x'^{a_1}\del_y^{a_2} \otimes \underbrace{m_a}_{\geq \mu_ a m_3} .
\end{align*}

Looking at $x^{\paren{2}}\del_y \cdot v =0$ again, we gather that $m_{\paren{2,a_2}} \neq 0$ implies that $m_{\paren{2,a_2}} = \mu_{\paren{2,a_2}} m_k$ for some $k \geq 3$. Secondly, we also see that if $X \cdot m_{\paren{1,a_2}}$ and $m_{\paren{2,a_2-1}}$ are in the same weight space, then $\mu_{(1,a_2)} = \mu_{(2,a_2-1)}$ for $0 \leq a_2 \leq p-2$. Otherwise $m_{(1,a_2)} = \mu_{(1,a_2)} m_{n+1}$ and $\mu_{(2,a_2-1)} = 0$. In particular, $m_{\paren{1,0}} = \mu_{\paren{1,0}} m_{n+1}$ and $m_{\paren{2, p-2}} = 0$.

We also see that if $m_a = \mu_a m_3$, then the associated terms cannot cancel with anything and we conclude $\mu_a m_3 = 0$.
Thus, we have:
\begin{align*}
 v = &\sum_{\substack{0 \leq a_2 \leq p-1 }}  \del_y^{a_2} \otimes m_{\paren{0,a_2}} +  \del_x' \otimes \mu_{\paren{1,0}} m_{n+1} + \sum_{\substack{1 \leq a_2 \leq p-2 }}  \del_x '\del_y^{a_2} \otimes \underbrace{m_{\paren{1,a_2}}}_{\geq \mu_a m_2} \\
&+\sum_{\substack{0 \leq a_2 \leq p-3 }}  \del_x'^2\del_y^{a_2} \otimes \underbrace{m_{\paren{2,a_2}}}_{\geq \mu_a m_3}  + \sum_{\substack{3 \leq a_1 \leq p-1 \\ 0 \leq a_2 \leq p-2}}   \del_x'^{a_1}\del_y^{a_2} \otimes \underbrace{m_a}_{\geq \mu_ a m_4} .
\end{align*}

By looking at the action of $x^{\paren{p-1}} \del_y$ on $v$ we see that we have:

\begin{center}
 $m_a = \mu_a m_n$ or $m_a = \mu_a m_{n+1}$ for $a_1 = p-2$.  
\end{center}

Furthermore
\[ \mu_{\paren{p-1, a_2-1}} = \mu_{\paren{p-2, a_2}}\]
for $1 \leq a_2 \leq p-2$, when $m_a = \mu_a m_n$. When such $m_a = \mu_a m_{n+1}$, then $\mu_{\paren{p-1, a_2-1}} = 0$. Finally, $\mu_{\paren{p-1, p-2}} = 0$.

We also see that $m_a = \mu_a m_4 \neq 0$ implies $a_1 = 3$, again by looking at the action of $x^{\paren{2}} \del_y$ .

Let's study the $m_{\paren{p-2, a_2}}$ and $m_{\paren{p-1, a_2}}$. We gather from $x \del_y \cdot v = 0$ that if $m_{\paren{p-2,a_2}} = \mu_{\paren{p-2,a_2}} m_{n+1}$, then \[\mu_{\paren{p-1, a_2-1}} = 0\]for $1 \leq a_2 \leq p-2$, as above. On the other hand, if $m_{\paren{p-2,a_2}} = \mu_{\paren{p-2,a_2}} m_{n}$, then \[\mu_{\paren{p-1, a_2-1}} = -\mu_{\paren{p-2, a_2}},\] again for $1 \leq a_2 \leq p-2$. Therefore, putting it all together we see that if $m_{\paren{p-2,a_2}} = \mu_{\paren{p-2,a_2}} m_{n}$, then 
\[ \mu_{\paren{p-1, a_2-1}} = -\mu_{\paren{p-1, a_2-1}},\] 
so they are all zero. On the other hand, if $m_{\paren{p-2,a_2}} = \mu_{\paren{p-2,a_2}} m_{n+1}$, then the $\mu_{\paren{p-1, a_2-1}}$ are all zero. Either way \[\mu_{\paren{p-1, a_2}} = 0 \]  for all $0 \leq a_2 \leq p-2$. And we have $m_{\paren{p-2,a_2}} = \mu_{\paren{p-2,a_2}} m_{n+1}$.

We have
\begin{align*}
 v = &\sum_{\substack{0 \leq a_2 \leq p-1 }}  \del_y^{a_2} \otimes m_{\paren{0,a_2}} +  \del_x' \otimes \mu_{\paren{1,0}} m_{n+1} + \sum_{\substack{1 \leq a_2 \leq p-2 }}  \del_x '\del_y^{a_2} \otimes \underbrace{m_{\paren{1,a_2}}}_{\geq \mu_a m_2} \\
&+\sum_{\substack{0 \leq a_2 \leq p-3 }}  \del_x'^2\del_y^{a_2} \otimes \underbrace{m_{\paren{2,a_2}}}_{\geq \mu_a m_3}  + \sum_{\substack{0 \leq a_2 \leq p-2 }}  \del_x'^3\del_y^{a_2} \otimes \underbrace{m_{\paren{3,a_2}}}_{\geq \mu_a m_4} \\
&+\sum_{\substack{4 \leq a_1 \leq p-2 \\ 0 \leq a_2 \leq p-2}}   \del_x'^{a_1}\del_y^{a_2} \otimes \underbrace{m_a}_{\geq \mu_ a m_5} .
\end{align*}

By considering the action of $C$, we see that the terms 
\[ -a_2 \del_y^{a_2 -1} \otimes Y \cdot m_{\paren{0,a_2}}\] 
cannot cancel with anything and thus, either $a_2 = 0$ or $m_{\paren{0,a_2}} = \mu_{\paren{0,a_2}} m_1$. 

We write thus,
\begin{align*}
 v = &\sum_{\substack{1 \leq a_2 \leq p-1 }}  \del_y^{a_2} \otimes \mu_{\paren{0,a_2}} m_1 +  \del_x' \otimes \mu_{\paren{1,0}} m_{n+1} + \sum_{\substack{1 \leq a_2 \leq p-2 }}  \del_x '\del_y^{a_2} \otimes \underbrace{m_{\paren{1,a_2}}}_{\geq \mu_a m_2} \\
&+\sum_{\substack{0 \leq a_2 \leq p-3 }}  \del_x'^2\del_y^{a_2} \otimes \underbrace{m_{\paren{2,a_2}}}_{\geq \mu_a m_3}  + \sum_{\substack{0 \leq a_2 \leq p-2 }}  \del_x'^3\del_y^{a_2} \otimes \underbrace{m_{\paren{3,a_2}}}_{\geq \mu_a m_4} \\
&+\sum_{\substack{4 \leq a_1 \leq p-2 \\ 0 \leq a_2 \leq p-2}}   \del_x'^{a_1}\del_y^{a_2} \otimes \underbrace{m_a}_{\geq \mu_ a m_5} + 1 \otimes m_{\paren{0,0} }.
\end{align*}

Now we let $x \del_y$ act on our maximal vector. We see that the term $1 \otimes X \cdot m_{\paren{0,0}}$ cannot cancel with anything, so we conclude that $m_{\paren{0,0}} = \mu_{\paren{0,0}} m_{n+1}$.

Furthermore, we see that the $\del_y^{a_2} \otimes \mu_{\paren{0,a_2}} m_2$ terms can only cancel with the terms $-\del_y^{a_2} \otimes m_{\paren{1,a_2 -1}}$, for $2  \leq a_2 \leq p-1$. Thus,
\[\mu_{\paren{0,a_2}} m_2 = m_{\paren{1,a_2 -1}} = \mu_{\paren{1,a_2 -1}} m_k, \]
and thus either $k=2$, and we have $\mu_{\paren{0,a_2}} = \mu_{\paren{1,a_2 -1}}$, or $\mu_{\paren{0,a_2}} = \mu_{\paren{1,a_2 -1}} = 0$. Consequently, we have

\begin{center}
 if $ 0 \neq m_{\paren{1, a_2}}$, then $ m_{\paren{1, a_2}} = \mu_{\paren{1, a_2}} m_2$, for $1 \leq a_2 \leq p-2$.  
\end{center}

Considering the term $m_{\paren{0,a_2}}$ when $a_2 = 1$, we see that it can only cancel with $-\del_y \otimes \mu_{\paren{1,0}} m_{n+1}$, which is not possible, thus we deduce that $\mu_{\paren{0, 1}} = 0 = \mu_{\paren{1,0}}$. 

But, in fact, now we can deduce information on all the $m_a$ from this. Looking again at the action of $x \del_y$, we see that the $\del_x' \del_y^{a_2} \otimes X \cdot m_{\paren{1,a_2}} = \del_x' \del_y^{a_2} \otimes \mu_{\paren{1,a_2}}m_3$ terms can only cancel with the terms $-2 \del_x' \del_y^{a_2} \otimes m_{\paren{2, a_2 -1}}$, for $1 \leq a_2 \leq p-2$. So, as above, we see that either they lie in the same weight space, and we have
\[\mu_{\paren{1,a_2}}  = 2\mu_{\paren{2,a_2 -1}}, \]
or they are both zero. Thus, we have $ 0 \neq m_{\paren{2, a_2}} = \mu_{\paren{2, a_2}} m_3$.

Continuing likewise, for higher values of $a_1$ up to an including $p-2$, we see that
\[\mu_{\paren{a_1,a_2}}  = \paren{a_1 +1 }\mu_{\paren{a_1 +1,a_2 -1}}, \]
if $m_{\paren{a_1 +1,a_2 -1}}$ is in the same weight space as $X \cdot m_{\paren{a_1,a_2}}$, and they are zero otherwise, where $ 0 \leq a_2 \leq p -2$ if $a_1 \geq 3$, meaning in such cases we can immediately see that $m_{\paren{a_1, 0}} = 0 = m_{\paren{a_1+1, p-2}}$. In the $a_1 = 2$ case we can say  \[m_{\paren{2, 0}} = 0 = m_{\paren{3, p-2}} = m_{\paren{3, p-3}}.\]

We summarise what we have:
\begin{align*}
 v = &1 \otimes \mu_{\paren{0,0}} m_{n+1 } + \sum_{\substack{2 \leq a_2 \leq p-1 }}  \del_y^{a_2} \otimes \mu_{\paren{0,a_2}} m_1 +  \sum_{\substack{1 \leq a_2 \leq p-2 }}  \del_x '\del_y^{a_2} \otimes \underbrace{\mu_{\paren{1,a_2}}}_{ =\mu_{\paren{0, a_2 +1}}}m_2 \\
&+\sum_{\substack{1 \leq a_2 \leq p-3 }}   \del_x'^2\del_y^{a_2} \otimes \underbrace{\mu_{\paren{2,a_2}}}_{ =\mu_{\paren{1, a_2 +1}}/2}m_3 + \sum_{\substack{1 \leq a_2 \leq p-4 }}  \del_x'^3\del_y^{a_2} \otimes \underbrace{\mu_{\paren{3,a_2}}}_{ =\mu_{\paren{2, a_2 +1}}/3}m_4 \\
&+\sum_{\substack{4 \leq a_1 \leq n \\ 1 \leq a_2 \leq p-3}}   \del_x'^{a_1}\del_y^{a_2} \otimes \underbrace{\mu_{\paren{a_1,a_2}}}_{ =\mu_{\paren{a_1-1, a_2 +1}}/a_1}m_{a_1 +1}.
\end{align*}

We now apply $C$ to $v$. Comparing the terms with exponent $1$ in the $\del_x'$ component, we see that $\mu_{\paren{0,a_2}} \neq 0$ implies that $a_2 = 1, 2n$, for $2 \leq a_2 \leq p-1$. Also, since  \[\mu_{\paren{0, a_2}} = \mu_{\paren{1, a_2-1}} = 2 \mu_{\paren{2, a_2-2}} = \ldots = n \mu_{\paren{n, a_2-n}} , \]
we see that if $\mu_{\paren{a_1,a_2}} \neq 0$, then $a_2 = 2n -a_1$.

We write, then
\[
 v = 1 \otimes \mu_{\paren{0,0}} m_{n+1 }
+\sum_{0 \leq a_1 \leq \, n  }   \del_x'^{a_1}\del_y^{2n-a_1} \otimes \underbrace{\mu_{\paren{a_1, 2n-a_1}}}_{ =\mu_{\paren{a_1-1, 2n-a_1+1}}/a_1}m_{a_1 +1} .
\]

We apply the action of $B$ to conclude. From it we see that we get the term
\[ s_{\paren{0,2n}} \del_y^{2n-1} \otimes \mu_{\paren{0,2n}} m_1,\]
which can only cancel with
\[\del_y^{2n-1} \otimes n \mu_{\paren{1,2n-1}} m_1, \]
noting that $Y \cdot m_2 = n m_1$. Now, we compute that $s_{\paren{0,2n}} = 4n^2 -n$. Thus we have either $\mu_{\paren{0,2n}} = \mu_{\paren{1, 2n-1}} = 0$ or $4n^2 - n + n = 0$. The latter cannot happen, as this implies that $4 n^2 = p t$, for some $t \in \N$, but since $p \geq 5$, $p$ doesn't not divide 4, so it must divide $n^2$, and thus must divide $n$ itself, which is not possible.

We conclude, hence, 
\[0 = \mu_{\paren{0, 2n}} = \mu_{\paren{1, 2n-1}} =  \mu_{\paren{2, 2n-2}} = \ldots =  \mu_{\paren{n, 2n-n}}. \]

Thus,  $v = 1 \otimes \mu_{\paren{0,0}} m_{n+1}$, as required.
\end{proof}

From this it follows that

\begin{thm}
  The induced module $Z(M) \cong Z(a,b)$, where $p-1 \geq a-b \geq 2$, is simple.
\end{thm}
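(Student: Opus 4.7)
The plan is to apply Lemma~\ref{maxvec}: since $Z(M)$ is finite-dimensional and restricted, it is simple if and only if every maximal vector generates all of $Z(M)$ as an $\widehat{H}$-module. The preceding proposition has already classified the maximal vectors of $Z(M)$ as the nonzero scalar multiples of $1 \otimes m_{n+1}$, so the task reduces to showing
\[
\widehat{H} \cdot (1 \otimes m_{n+1}) \;=\; Z(M).
\]

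First I would descend within the $M$-factor. Applying the formula for the action of $Y$ on $Z(M)$ derived in Section~\ref{Actions} to $1 \otimes m_{n+1}$ (for which the only nonzero coefficient corresponds to the multi-index $a = (0,0)$), every summand except $1 \otimes Y \cdot m_{n+1}$ vanishes because $a_1 = a_2 = 0$. Hence $Y \cdot (1 \otimes m_{n+1}) = n \cdot (1 \otimes m_n)$. Iterating and using $Y \cdot m_i = (i-1)(n-i+2)\, m_{i-1}$, one sees that $1 \otimes m_i \in \widehat{H} \cdot (1 \otimes m_{n+1})$ for every $1 \leq i \leq n+1$, provided each coefficient $(i-1)(n-i+2)$ is nonzero in $k$. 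This is the one place the hypothesis enters: since both factors lie in $\{1, \ldots, n\}$ and $n \leq p-1$, all such coefficients are indeed units in $k$.

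Next I would propagate across the PBW factor. Because $\del_x', \del_y \in \widehat{H}$, acting by the monomials $\del_x'^{a_1} \del_y^{a_2}$ with $0 \leq a_1, a_2 \leq p-1$ on each $1 \otimes m_i$ produces all of the PBW basis vectors $(\del_x' \del_y)^a \otimes m_i$ of $Z(M)$ appearing in Equation~(\ref{basisform}). Therefore $\widehat{H} \cdot (1 \otimes m_{n+1}) = Z(M)$, and $Z(M)$ is simple by Lemma~\ref{maxvec}.

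In truth, there is essentially no obstacle at this stage: the heavy lifting was carried out in the previous proposition's classification of the maximal vectors. The only step requiring any care is the observation that the scalars arising from the $\sl_2$-action of $Y$ do not vanish modulo $p$, which is guaranteed by the bound $n \leq p-1$ on the dimension of $L_0(a,b)$.
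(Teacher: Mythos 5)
Your proof is correct and follows essentially the route the paper intends: the paper states the theorem as an immediate consequence of the classification of maximal vectors, the point being exactly that every maximal vector is a nonzero multiple of $1 \otimes m_{n+1}$, which generates $Z(M)$, so Lemma~\ref{maxvec} applies. Your descent through the $m_i$ via the $Y$-action (with the nonvanishing of $(i-1)(n-i+2)$ for $n \leq p-1$) is a slightly more explicit substitute for the observation that $1 \otimes m_{n+1}$ already generates $1 \otimes M$ under $\widehat{H}_{(0)}$ because $M$ is simple, but both arguments are sound.
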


Lastly, we prove the following:

\begin{prop}
  There are two isomorphism classes of $(p^2-1)$-dimensional restricted simple $\widehat{H}$-modules, one represented by $L(-1,-1)$, the other by $L(0,-1)$.
\end{prop}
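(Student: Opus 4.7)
The plan is to argue in two stages: first, to show that every $(p^2-1)$-dimensional restricted simple $\widehat{H}$-module is isomorphic to either $L(-1,-1)$ or $L(0,-1)$; and second, to show that these two modules are in fact not isomorphic to one another.

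For the first stage, I would use that every simple restricted $\widehat{H}$-module occurs as a quotient of some $Z(\lambda)$ with $\lambda \in \mathbb{F}_p^2$, and stratify by $a \coloneqq \lambda_1 - \lambda_2 \in \mathbb{F}_p$. When $a \geq 2$, the preceding theorem shows that $Z(\lambda)$ is itself simple of dimension $p^2(a+1) \geq 3p^2 > p^2 - 1$, so no $(p^2-1)$-dimensional composition factor can occur. The remaining cases $a \in \{0,1\}$ were handled in the preceding subsections, and an inspection of the composition factor lists of $Z(a,a)$ for $a \in \mathbb{F}_p$ and of $Z(a,b)$ with $a - b = 1$ shows that the only $(p^2-1)$-dimensional simples appearing are $L(-1,-1)$, $L(0,-1)$, and $L(-1,-2)$, where the last is identified with $L(-1,-1)$ via the isomorphisms $L(a, b) \cong L(a, b+1)$ recorded in the remark following Theorem~\ref{twodim}.

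For the second stage, I would exploit the explicit realizations already obtained. By construction $L(0,-1) = \widehat{H}\bracket{\del_y \otimes m} \leq Z(0,0)$ contains a maximal vector of weight $(0,-1)$. If $L(-1,-1)$ were isomorphic to $L(0,-1)$, then $L(-1,-1)$ would also have to contain such a maximal vector. Working instead in the realization $L(-1,-1) \cong O(2;(1,1))/(k\cdot 1)$ provided by Lemma~\ref{O-module}, the weight-$(0,-1)$ subspace is one-dimensional, spanned by the class of $y^{(p-1)}$, and a direct calculation gives
\[
x\del_y \cdot y^{(p-1)} = x y^{(p-2)},
\]
which is non-zero in the quotient. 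Hence $L(-1,-1)$ has no maximal vector of weight $(0,-1)$, giving a contradiction.

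The chief obstacle is the bookkeeping in the first stage, where one must audit every composition factor of every previously analyzed $Z(\lambda)$ to ensure none is missed and that the various coincidental isomorphisms (such as $L(-1,-2) \cong L(-1,-1)$) are correctly accounted for. Once this list of candidates is pinned down, the second stage collapses to a single line.
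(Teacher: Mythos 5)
Your proposal is correct and follows essentially the same strategy as the paper: reduce the candidate list to $L(-1,-1)$ and $L(0,-1)$ by auditing the composition factors of the induced modules, then rule out an isomorphism by showing $L(-1,-1)$ has no maximal vector of weight $(0,-1)$, via the unique weight-$(0,-1)$ vector not being killed by $x\del_y$. The only (cosmetic) difference is that you run the last computation in the realization $O(2;(1,1))/(k\cdot 1)$, with $\delta = y^{(p-1)}$ and $x\del_y\cdot\delta = xy^{(p-2)}\neq 0$, whereas the paper works in $Z(-1,-1)/k\bracket{\del_x'^{p-1}\del_y^{p-1}\otimes m}$ with $\delta = \del_x'^{p-1}\otimes m$; both are valid since the paper has already identified these two models of $L(-1,-1)$.
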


\begin{proof}
The only $(p^2-1)$-dimensional restricted simple modules arise as composition factors of modules induced from one-dimensional or two-dimensional modules. All of these are isomorphic to either $L(0,-1)$ or $L(-1,-1)$, as we have seen. It remains to show that these two are not isomorphic.

Now, if they \emph{were} isomorphic, this would tell us that $Z(0,-1)$ has a simple quotient isomorphic to $L(-1,-1)$, i.e.,
\[ 0 \neq \Hom_{\widehat{H}}(Z(0, -1), L(-1,-1)  \cong \Hom_{\widehat{H}_{(0)}}(L_0(0, -1), L(-1,-1))  .\]
Thus, $L(-1,-1)$ would need to have a maximal vector of weight $(0,-1)$. Recall that \[L(-1,-1) = Z(-1,-1)/k\bracket{\paren{\del_x'\del_y}^{\omega_0} \otimes m}.\]
If $ 0 \neq \delta \in L(-1,-1)$ is a vector of weight $(0,-1)$, then working in the quotient we deduce that $\delta = \del_x'^{p-1} \otimes m$. This is a problem, as $X \cdot \delta = \del_x'^{p-2}\del_y \otimes m \neq 0$, so that $\delta$ is not maximal. Thus no maximal vector of such a weight exists, and we are done.
\end{proof}

This completes the proof of our main result, Theorem~\ref{mainresult}.

\section{Restrictions to $W(1;1)$-subalgebras and balanced toral elements}

We end by giving a characterisation of how the simple restricted $\widehat{H}$-modules restrict to a subalgebra isomorphic to the first Witt algebra.

We have from Lemma~2.8 in \cite{Stewart}:
  \begin{lem}\label{wittsubalg}
    The subalgebra $H$ of $W(2;(1,1))$ contains a $p$-subalgebra $W \coloneqq W(1;1) $ with basis
\[\set{ \del_y, y\del_y - x \del_x, y^{\paren{2}} \del_y - xy \del_x, \ldots,y^{\paren{p-1}} \del_y - xy^{\paren{p-2}} \del_x},\]
with these elements playing the roles of $\del, x\del, x^{\paren{2}} \del, \ldots, x^{\paren{p-1}}\del$, respectively, where $x$ is the image of $X$ in the truncated polynomial ring $k[X]/(X^p)$.   
  \end{lem}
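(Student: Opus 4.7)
The plan is to exhibit an injective homomorphism of restricted Lie algebras $\phi \colon W(1;1) \to H$ defined by
\[ \phi(x^{(j)}\del) = e_j \coloneqq y^{(j)}\del_y - xy^{(j-1)}\del_x \quad (0 \le j \le p-1),\]
with the convention $y^{(-1)} = 0$ (so $e_0 = \del_y$), and then conclude that its image is a $p$-subalgebra of $H$ realizing the claimed basis.

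The first step is to confirm inclusion in $H$: for $j = 0$, $e_0 = \del_y$ is the $(i,j)=(1,0)$ member of the second family in the basis of $H$ recalled at the beginning of Section~\ref{prelim}; for $1 \le j \le p-1$, $e_j = x^{(0)} y^{(j)}\del_y - x^{(1)} y^{(j-1)}\del_x$ is the $i=1$ member of that family. Linear independence of $\{e_0, \ldots, e_{p-1}\}$ is immediate from the $\del_y$-coefficients.

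Next, I would verify the bracket relations. Using $[A\del_\alpha, B\del_\beta] = A\del_\alpha(B)\del_\beta - B\del_\beta(A)\del_\alpha$ together with the divided-power identity $y^{(a)}y^{(b)} = \binom{a+b}{a} y^{(a+b)}$, expanding $[e_i, e_j]$ into four pieces and applying Pascal's rule to combine the two derivation components, one obtains
\[ [e_i, e_j] = \Bigl(\tbinom{i+j-1}{i} - \tbinom{i+j-1}{j}\Bigr) e_{i+j-1},\]
with the convention $e_k = 0$ for $k \ge p$ (the corresponding divided powers vanish on the right-hand side when $i+j-1 \ge p$, and the leading binomial coefficient also vanishes mod $p$ when $i+j-1 = p$). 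This matches $[x^{(i)}\del, x^{(j)}\del]$ in $W(1;1)$, so $\phi$ is an injective Lie algebra homomorphism.

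Finally, verify that $\phi(W(1;1))$ is closed under the $[p]$-map of $W(2;(1,1))$. By the formulas recalled in Section~\ref{prelim}, $e_0^{[p]} = \del_y^{[p]} = 0$, matching $\del^{[p]} = 0$. The element $e_1 = y\del_y - x\del_x$ is a difference of two commuting $p$-stable elements, so $e_1^{[p]} = y\del_y - x\del_x = e_1$, matching $(x\del)^{[p]} = x\del$. For $2 \le j \le p-1$, a direct computation gives $e_j(x^{(a)} y^{(b)}) = c_{a,b}^{(j)}\, x^{(a)} y^{(b+j-1)}$ for a single scalar $c_{a,b}^{(j)}$, so each application of $e_j$ raises the $y$-degree of a basis monomial by exactly $j - 1 \ge 1$; iterating $p$ times pushes the $y$-degree to at least $p$, forcing $e_j^p = 0$ as an operator on $k[X,Y]/(X^p, Y^p)$ and hence $e_j^{[p]} = 0$, matching $(x^{(j)}\del)^{[p]} = 0$. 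Since closure on a basis propagates to closure on the whole subspace via Jacobson's formula (the correction terms being iterated commutators that remain in the Lie subalgebra), $\phi(W(1;1))$ is a $p$-subalgebra of $H$ and $\phi$ is a restricted Lie algebra isomorphism onto its image.

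The main obstacle will be the $[p]$-computation for $j \ge 2$, since $y^{(j)}\del_y$ and $xy^{(j-1)}\del_x$ do not commute and a naive binomial expansion of $e_j^p$ is unavailable. The approach above sidesteps this by treating $e_j^p$ as an associative operator on the natural basis of the truncated polynomial ring, where each application is monomial-diagonal and strictly raises the $y$-degree, so $p$-fold iteration must vanish.
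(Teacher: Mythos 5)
Your proposal is correct and complete. Note that the paper itself does not prove this lemma at all: it is imported verbatim from Lemma~2.8 of Herpel--Stewart \cite{Stewart}, so there is no internal proof to compare against. Your direct verification supplies exactly what such a proof needs: (i) the elements $e_j = y^{(j)}\del_y - xy^{(j-1)}\del_x$ are the $i=1$ members of the displayed basis of $H$, hence lie in $H$ and are visibly independent; (ii) the bracket $[e_i,e_j]$ collapses to a multiple of $e_{i+j-1}$ because the $\del_y$-coefficient $\binom{i+j-1}{i}-\binom{i+j-1}{j}$ and the $\del_x$-coefficient $\binom{i+j-2}{i}-\binom{i+j-2}{j}$ agree by Pascal's rule together with the symmetry $\binom{i+j-2}{i-1}=\binom{i+j-2}{j-1}$, and the structure constants are precisely those of $W(1;1)$ (your parenthetical remark correctly disposes of the boundary case $i+j-1=p$, where the surviving $xy^{(p-1)}\del_x$ term is killed because $\binom{p}{i}\equiv 0$); and (iii) the $[p]$-map is respected, with the only nontrivial case $j\ge 2$ handled cleanly by observing that $e_j$ sends each monomial $x^{(a)}y^{(b)}$ to a scalar multiple of $x^{(a)}y^{(b+j-1)}$, so $e_j^{\,p}=0$ as a derivation, matching $(x^{(j)}\del)^{[p]}=0$; Jacobson's formula then extends $p$-closure from the basis to the span. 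This is a sound and essentially minimal argument, and it fills in a citation the paper leaves external.
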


Briefly, we recall the restricted representation theory for $W$, see Chang in \cite{Chang}:
\begin{thm}
  There are $p$ isomorphism classes of irreducible restricted representations of $W$, with representatives $L_W(r)$ for $r \in \ls0,1, \ldots, p-1\rs$. $L_W(r)$ is obtained from the induced representation $Z^+(r)$, the Verma module, and is equal to it if $r \neq 0,-1$, with dimension $p$. If $r = 0$, then $Z^+(0)$ has a trivial simple quotient, which is $L_W(0)$, and $Z^+(p-1)$ has a $(p-1)$-dimensional simple quotient, denoted $L_W(p-1)$.
\end{thm}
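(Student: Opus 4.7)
The plan is to parallel the Verma-module construction of Section~2 in the much simpler setting of $W = W(1;1)$. The natural grading $W = W_{-1}\oplus W_0 \oplus \cdots \oplus W_{p-2}$, with $W_k$ spanned by $x^{(k+1)}\partial$, gives a filtration in which $W_{(0)} = \bigoplus_{i\geq 0}W_i$ is a codimension-one subalgebra and $W_{(1)} = \bigoplus_{i\geq 1}W_i$ is a $p$-nilpotent ideal. For each $r \in \mathbb{F}_p$, the character $x\partial \mapsto r$ inflates to a one-dimensional restricted $W_{(0)}$-module $k_r$, and I form the Verma module
\[ Z^+(r) \coloneqq \mathfrak{u}(W)\otimes_{\mathfrak{u}(W_{(0)})} k_r. \]
By PBW, $Z^+(r)$ has dimension $p$ with basis $v_i = \partial^i \otimes 1_r$ for $0 \leq i \leq p-1$ (using $\partial^{[p]} = 0$), and $x\partial$ acts on $v_i$ by $r-i$, so the weights sweep out $\mathbb{F}_p$ with one-dimensional weight spaces.

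Exactly as in Section~2 of the paper, every simple restricted $W$-module arises as exactly one $L_W(r) \coloneqq Z^+(r)/\mathrm{rad}\,Z^+(r)$: $W_{(1)}$ being $p$-nilpotent forces it to act trivially on any simple restricted $W_{(0)}$-submodule of a simple $W$-module $L$, which is therefore a character $k_r$, and Frobenius reciprocity yields a surjection $Z^+(r)\twoheadrightarrow L$. Uniqueness of the maximal submodule $\mathrm{rad}\,Z^+(r)$ follows from the one-dimensionality of the weight spaces (any proper submodule is a proper span of weight lines, and proper submodules are thus closed under sum), and the $p$ modules $L_W(r)$ are pairwise non-isomorphic since they have distinct highest $x\partial$-weights.

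It remains to identify $\mathrm{rad}\,Z^+(r)$, equivalently the nontrivial maximal vectors $v_i$ ($i\neq 0$) annihilated by every $x^{(k+1)}\partial$ for $k\geq 1$. I handle the two exceptional cases by concrete models. For $r = p-1$, there is a $W$-module isomorphism $Z^+(p-1)\cong O(1;1) = k[x]/(x^p)$ sending $v_0$ to $x^{(p-1)}$ and $v_{p-1}$ to the constant $1$; since $1$ is killed by every $x^{(a)}\partial$, the line $k v_{p-1}$ is a trivial submodule and the quotient $L_W(p-1)\cong O(1;1)/(k\cdot 1)$ is $(p-1)$-dimensional, its simplicity a direct check. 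For $r = 0$, the identity $[x^{(k+1)}\partial,\partial] = -x^{(k)}\partial$ gives $x^{(k+1)}\partial\cdot v_1 = -x^{(k)}\partial\cdot v_0 = 0$ for every $k\geq 1$, so $v_1$ is maximal of weight $-1$, and iterating $\partial$ generates the $(p-1)$-dimensional simple submodule $k\langle v_1,\dots,v_{p-1}\rangle$ (closure under $W$ follows from an inductive check that $x^{(k+1)}\partial\cdot v_k = 0$ and otherwise from weight reasons), with trivial one-dimensional quotient $L_W(0)$. For all other $r$, the key formula from a commutator induction in the spirit of Lemma~\ref{AssociativeComm},
\[ x^{(2)}\partial\cdot v_i \;=\; \tfrac{1}{2}\,i(i-1-2r)\,v_{i-1}, \]
vanishes only at $i = 0$ or $i \equiv 2r+1\pmod p$, so the only potentially nontrivial candidate is $v_{2r+1}$; a further direct computation yields $x^{(3)}\partial\cdot v_{2r+1} = c(r)\,v_{2r-1}$ with $c(r)$ an explicit polynomial whose roots in $\mathbb{F}_p$ are exactly $0$ and $p-1$, ruling out any spurious maximal vector and making $Z^+(r)$ simple.

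The main obstacle is producing and evaluating the formula $x^{(3)}\partial\cdot v_{2r+1} = c(r)\,v_{2r-1}$ and verifying that $c(r)$ has exactly the two roots claimed: iterated use of $[x^{(3)}\partial,\partial^i] = -\sum_{j}\partial^j x^{(2)}\partial\,\partial^{i-1-j}$ produces a sum of terms that must be evaluated on $v_0$ via the $x^{(2)}\partial$-formula, and extracting the resulting polynomial identity requires patient bookkeeping. A sanity check for $r = 1$ gives $x^{(3)}\partial\cdot v_3 = 2v_1 \neq 0$, matching the prediction, and suggests the general form of $c(r)$.
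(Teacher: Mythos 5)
The paper itself does not prove this theorem: it is recalled from Chang and used as a black box, so there is no in-paper argument to compare yours against. Your self-contained Verma-module proof is the standard one, and it correctly transplants the paper's strategy for $\widehat{H}$ (filtration, induction from the codimension-one subalgebra, exhaustiveness via a trivially-acting $p$-nilpotent ideal plus Frobenius reciprocity, and a hunt for maximal vectors exploiting one-dimensional weight spaces) to the rank-one setting. The structural steps --- uniqueness of the maximal submodule, pairwise non-isomorphism via highest weights, and the two concrete exceptional models --- are all sound.

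One assertion is literally false, though harmlessly so. The general formula is
\[ x^{(m)}\del\cdot v_i \;=\; (-1)^m\Bigl(\binom{i}{m} - r\binom{i}{m-1}\Bigr)v_{i-m+1}, \]
which for $m=2$ recovers your $\tfrac{1}{2}i(i-1-2r)v_{i-1}$ and for $m=3$, $i=2r+1$ gives $c(r)=\tfrac{1}{3}r(r+1)(2r+1)$ (consistent with your check $c(1)=2$). Its roots in $\mathbb{F}_p$ are $0$, $p-1$ \emph{and} $(p-1)/2$, not only the two you claim. The argument survives because when $r=(p-1)/2$ one has $2r+1\equiv 0 \pmod p$, so the ``candidate'' $v_{2r+1}$ is $v_0$ itself and the $x^{(2)}\del$ computation already leaves no nontrivial maximal vector; but this case must be split off explicitly rather than absorbed into an incorrect statement about the roots of $c$. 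With that repair, and with the routine verifications you defer (the isomorphism $Z^+(p-1)\cong O(1;1)$ via Frobenius reciprocity and a dimension count; simplicity of $O(1;1)/(k\cdot 1)$; and closure of $k\langle v_1,\dots,v_{p-1}\rangle$ under every $x^{(k)}\del$ when $r=0$, which the displayed formula yields at once since $\binom{j}{j+1}=0$), the proof is complete.
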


Now, in Herpel and Stewart \cite[Lem. 2.1, Prop. 2.2]{Stewart}, the authors also provide two key results, one an algorithm, to work out the composition factors of a graded $W$-module. They are as follows:

\begin{lem} \label{lem:grade}
Suppose $V$ is a $W$-module admitting a grading $V=\bigoplus_{i\in\Z} V(i)$ such that $\del\cdot V(i)\subset V(i+2)$ and such that each $V(i)$ is stable under $x\del$. Then there exists a unique semisimple $W$-module $V_s=V_1\oplus V_2\oplus \dots \oplus V_r$ with $V_s=\bigoplus_{i\in\Z} V_s(i)$ with $V_s(i)=V(i)$ as $x\del$-modules and each $V_i$ is a a graded irreducible $W$-module.

For this module $V_s$, the set of composition factors $[V|W]$ and $[V_s|W]$ coincide.
\end{lem}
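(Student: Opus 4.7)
The plan is to construct $V_s$ as the graded semisimplification of $V$ and verify it has the required properties. First, I observe that the given grading makes $V$ into a graded module over $W$ with respect to the natural grading $\deg(x^{(i)}\del) = 2-2i$ on $W$ (one checks $[W_j, W_k] \subseteq W_{j+k}$ directly; the conditions on $\del$ and $x\del$ stated in the lemma extend to full $W$-compatibility in the setting in which it will be applied). Since $V$ is finite dimensional, a graded Jordan--H\"older argument produces a chain $V = V^{(0)} \supsetneq V^{(1)} \supsetneq \cdots \supsetneq V^{(r)} = 0$ of graded $W$-submodules with graded-simple successive quotients $V_i \coloneqq V^{(i-1)}/V^{(i)}$; I then set $V_s \coloneqq V_1 \oplus \cdots \oplus V_r$ with the induced grading. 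This $V_s$ is manifestly graded and semisimple.

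The crux, and the main obstacle, is to prove that every graded-simple $W$-module is ungraded simple, and hence isomorphic to $L_W(r)$ for some $r$. Let $U$ be graded simple and let $d_0$ be the smallest degree with $U_{d_0} \neq 0$. For $i \geq 2$, the element $x^{(i)}\del$ has negative degree $2-2i$, so $x^{(i)}\del \cdot U_{d_0} \subseteq U_{d_0 + 2 - 2i} = 0$. Choosing an $x\del$-eigenvector $u \in U_{d_0}$ with eigenvalue $r$ produces a classical highest-weight vector for $W$ in the sense of Chang. By graded simplicity, $\lie{u}(W) \cdot u = U$, so $U$ is a graded quotient of the Verma module $Z^+(r)$. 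The unique maximal proper ungraded submodule of $Z^+(r)$ is automatically graded (since $Z^+(r)$ is graded and its radical is generated by homogeneous elements), so we deduce $U \cong L_W(r)$ as $W$-modules, which is ungraded simple by Chang's classification.

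Given this, the filtration $(V^{(i)})$ is simultaneously an ungraded composition series of $V$, so by ungraded Jordan--H\"older the multiset $[V|W]$ equals $\{V_1, \ldots, V_r\}$, which is also $[V_s|W]$. For the identification $V_s(i) = V(i)$ as $x\del$-modules, I use that $x\del$ is a restricted toral element and so acts diagonalisably; the induced filtration $(V^{(j)}(i))_j$ of $V(i)$ therefore splits, and its associated graded $V_s(i)$ is isomorphic to $V(i)$. Uniqueness: any $V_s'$ satisfying the hypotheses decomposes as a direct sum of graded simples $L_W(r_k)[d_k]$, and the multiset of pairs $\{(r_k, d_k)\}$ is forced by the composition factor multiset $[V|W]$ together with the $x\del$-module structure on each $V(i)$, both intrinsic to $V$.
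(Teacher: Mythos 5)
First, a point of orientation: the paper does not prove Lemma~\ref{lem:grade} at all --- it is quoted verbatim from Herpel and Stewart \cite[Lem.~2.1]{Stewart} as an imported tool, so there is no in-paper proof to compare yours against. Your attempt therefore has to stand on its own as a proof of the statement exactly as it is phrased.

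Judged that way, there is a genuine gap, and it sits precisely where you flagged it. You replace the stated hypotheses (only $\del$ and $x\del$ are required to interact with the grading) by the much stronger hypothesis that $V$ is a graded module over $W$ equipped with the grading $\deg(x^{(i)}\del)=2-2i$, justifying this by asserting that full compatibility holds ``in the setting in which it will be applied''. It does not. In this paper the gradings are by powers of $\del_y$, and the displayed formula for the action of $L = y^{(p-1)}\del_y - xy^{(p-2)}\del_x$ (the element playing the role of $x^{(p-1)}\del$) sends inputs of degrees $2p-6$, $2p-4$, $2p-2$ to outputs of degrees $0$; $0$ and $2$; and $0$, $2$ and $4$, respectively --- that is, $L$ shifts the degree by $6-2p$, $4-2p$ and $2-2p$ simultaneously, so it is not homogeneous of any degree. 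The same failure occurs in Herpel--Stewart's original application, where the grading comes from a cocharacter attached to the nilpotent element $\del$ and the higher $x^{(i)}\del$ are not homogeneous; this is exactly why the lemma is stated with hypotheses on $\del$ and $x\del$ only. Consequently your graded Jordan--H\"older filtration by graded $W$-submodules, and everything downstream of it (graded-simple subquotients, graded quotients of $Z^+(r)$, the construction of $V_s$), never gets off the ground: under the actual hypotheses the subspaces $V^{(j)}$ you produce need not be $W$-submodules at all. A correct argument must work with the weak hypotheses --- for instance by exploiting the structure of each $L_W(r)$ as a module over the two-dimensional subalgebra $k\langle \del, x\del\rangle$, which is all the grading sees --- or one should simply cite \cite[Lem.~2.1]{Stewart} as the paper does. (A minor secondary remark: even granting full gradedness, the step ``graded simple $\Rightarrow$ isomorphic to $L_W(r)$'' should spell out that if the kernel $K$ of $Z^+(r)\twoheadrightarrow U$ were strictly smaller than the unique maximal submodule $R$, then the graded submodule $R/K$ would contradict graded simplicity of $U$; as written you only observe that $R$ is graded.)
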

\begin{prop}
Let $V$ be as in Lemma \ref{lem:grade}. For $i \in \Z$ with
$V(i) \neq 0$, let $\ell_i$ be a list (with multiplicities) of the $x\del$-weights
on $V(i)$. Then the following algorithm determines the composition
factors (with multiplicities) of $V$ as a $W$-module:

\begin{enumerate}
\item Let $r \in \Z$ be maximal such that $\ell_r$ is nonempty. Pick $\mu \in \ell_r$.
\item Record a composition factor $U=L(\lambda)$ for $\lambda=\mu-1$ if $\mu\neq 0,1$ and $U=L(p-1)$, $L(0)$ if $\mu=1, 0$ respectively. Form a new set of lists $\{\ell'_r\}$ by removing weights from $\{\ell_r\}$ in the following way: If $U=L(0)$ remove a $0$-weight from $\ell_r$, if $U=L(p-1)$ remove one weight $1,2,\dots p-1$ from $\ell_r,\ell_{r-2},\dots,\ell_{r-2p+4}$ respectively and otherwise remove one weight $\mu,\mu+1,\dots,\mu+p-1$ from $\ell_r,\ell_{r-2},\dots,\ell_{r-2p+2}$. 
\item If the new lists $\{ \ell'_r \}$ are not all empty, repeat from Step (i). 
\end{enumerate}
\end{prop}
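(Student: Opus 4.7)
The plan is to reduce to the graded semisimple case using Lemma~\ref{lem:grade} and then induct on $\dim_k V$. By that lemma the composition factors of $V$ as a $W$-module agree with those of its semisimple replacement $V_s$, and the $x\del$-weight lists $\ell_i$ in each grade coincide for $V$ and $V_s$. Hence I may assume $V = V_1 \oplus \cdots \oplus V_t$ is a direct sum of graded irreducible $W$-modules, each a grade shift of some $L_W(\lambda)$.

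The key preliminary step is to classify, up to grade shift, the unique grading that each restricted simple $L_W(\lambda)$ admits compatible with $\del \cdot V(i) \subseteq V(i+2)$. Using that $\del$ lowers $x\del$-weight by $1$ while raising grade by $2$: for $\lambda \ne 0, p-1$, $L_W(\lambda) = Z^+(\lambda)$ has dimension $p$ with weights $\lambda, \lambda-1, \ldots, \lambda - (p-1) \equiv \lambda + 1 \pmod{p}$ occurring in grades $0, 2, \ldots, 2(p-1)$ respectively; $L_W(0)$ is trivial with its sole weight $0$ in a single grade; and $L_W(p-1)$ has dimension $p-1$ with weights $p-1, p-2, \ldots, 1$ in grades $0, 2, \ldots, 2(p-2)$. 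Consequently, the top-grade weight of any graded irreducible $W$-module uniquely determines its isomorphism class: top weight $0$ forces $L_W(0)$; top weight $1$ forces $L_W(p-1)$ (a generic $L_W(\lambda)$ with top weight $1$ would require $\lambda = 0$, which is non-generic); and any other top weight $\mu$ forces $L_W(\mu - 1)$.

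For the inductive step, let $r$ be maximal with $\ell_r \ne \emptyset$ and pick $\mu \in \ell_r$. Because $V$ is graded with $r$ maximal, some summand $V_j$ must have its top grade equal to $r$, and since each graded irreducible has a single weight in each of its grades, $V_j$'s top weight is exactly $\mu$. By the classification above $V_j$ is isomorphic to the module $U$ recorded in Step 2, and its weight distribution across grades $r, r-2, \ldots$ matches precisely the weights prescribed for removal in Step 2. After the removal, the resulting lists $\{\ell'_r\}$ are those of $V_s / V_j$; since $\dim_k V_s / V_j < \dim_k V$, the inductive hypothesis completes the proof.

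The main obstacle is verifying the classification of graded simples cleanly and matching their weight distributions with the prescribed removal rule, especially excluding spurious identifications in the degenerate cases $\mu \in \{0, 1\}$. A secondary subtlety is that Lemma~\ref{lem:grade} only guarantees that $V_s$ and $V$ agree as $x\del$-modules in each grade, not as $W$-modules, so the reduction to the semisimple case is genuinely necessary and cannot be replaced by a direct decomposition of $V$ into graded irreducibles.
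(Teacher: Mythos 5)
Your proof is correct. Note that the paper does not prove this proposition at all --- it is imported verbatim from Herpel and Stewart \cite[Prop.~2.2]{Stewart} --- so there is no in-paper argument to compare against; your route (semisimplify via Lemma~\ref{lem:grade}, classify the possible gradings on the restricted simples $L_W(\lambda)$ so that the top-grade weight determines the summand and its weight distribution matches the removal rule, then strip off one summand and induct on dimension) is the natural one and matches the argument in the cited source. The only hypothesis worth making explicit is that $V$ is a \emph{restricted} $W$-module (as it is in every application here), since your classification of graded irreducibles only covers the restricted simples.
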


As an $H$-subalgebra, $W$ is generated by the elements $\del_y$ and $L \coloneqq y^{\paren{p-1}}\del_y -xy^{\paren{p-2}}\del_x$.

We calculate the action of the latter as
\begin{align*}
0 = L \cdot v = &-\sum _{\substack{0 \leq a_1 \leq p-1 \\ a_2 = p-3}}    {a_1} \del_x'^{a_1-1} \otimes Y \cdot m_a 
+\sum _{\substack{0 \leq a_1 \leq p-1 \\ a_2 = p-2}}   2  a_{1}   \del_x'^{a_1 -1}\del_y \otimes Y \cdot m_a \\
&+\sum _{\substack{0 \leq a_1 \leq p-1 \\ a_2 = p-2}}    \lp   \lambda ( a )_2  - \lambda ( a ) _ { 1}   +  {a_1} \rp \del_x'^{a_1} \otimes  m_a \\
&-2 \otimes X \cdot m_{\omega_0} -\sum _{\substack{0 \leq a_1 \leq p-1 \\ a_2 = p-1}}  {a_1}\del_x'^{a_1 -1  } \del_y^2 \otimes Y \cdot m_a  \\
&+\sum _{\substack{0 \leq a_1 \leq p-1 \\ a_2 = p-1}}    \lp   \lambda ( a )_1  - \lambda ( a ) _ { 2}  -1 - {a_1} \rp \del_x'^{a_1} \del_y \otimes  m_a .
\end{align*}
This will be useful as we will often need to check that a given $k$-span of vectors is indeed a $W$-module.
\begin{thm}
The restrictions of simple restricted modules   $L(\lambda)$ to the subalgebra $W$ provided by Lemma~\ref{wittsubalg} are as follows. We have 

\begin{enumerate}
\item $[L(0,0)|W]=L_W(0)$, 
\item $[L(-1,-1)|W]= [L(0,-1)| W ] = [\bigoplus_{j=0}^{p-2}L_W(j)\oplus L_W(p-1)^{ 2}]$, 
\item for $\lambda$ not exceptional   \[[L(\lambda)|W]=\left[\left(\bigoplus_{j=1}^{p-2}L_W(j)\oplus L_W(0)^2\oplus L_W(p-1)^2\right)^{(r+1)}\right],\]
where $\lambda_1 - \lambda_2 =r $.
\end{enumerate}

 In particular every $p$-representation of $\widehat{H}$ restricted to $W$ contains the same number of composition factors of each $L_W(j)$, where $1\leq j\leq p-2$.
\end{thm}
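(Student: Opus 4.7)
The plan is to apply the algorithm in the preceding Proposition. For each simple $L(\lambda)$ I must (i) equip it with a grading satisfying the hypotheses, (ii) compute the $(y\del_y-x\del_x)$-weight multisets $\ell_i$ on each graded piece, and (iii) trace the algorithm. For the grading I would declare $\del_x'^{a_1}\del_y^{a_2}\otimes m_k$ to lie in degree $2a_2$. This is almost correct, but Equation~(\ref{delyaction}) contributes a stray term $\del_y^{a_2}\otimes X\cdot m_k$ living in degree $2a_2$ (not $2a_2+2$) when $a_1=p-1$. I would repair this by replacing the basis vector $\del_x'^{p-1}\del_y^{a_2}\otimes m_k$ by
\[ w_{p-1,a_2,k}\coloneqq \del_x'^{p-1}\del_y^{a_2}\otimes m_k + a_2\,\del_y^{a_2-1}\otimes m_{k+1} \]
(setting $m_{n+2}=0$). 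A short calculation then gives $\del_y\cdot w_{p-1,a_2,k}=w_{p-1,a_2+1,k}$ for all $a_2$, using that $p=0$ in $k$ and $\del_y^{[p]}=0$ to handle $a_2=p-1$. So $\del_y$ acts as a pure shift of $+2$ in grading; and a direct weight computation shows both summands of $w_{p-1,a_2,k}$ lie in the same $(y\del_y-x\del_x)$-eigenspace, hence each graded piece is stable under the torus.

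For step (ii), a direct commutation shows that $\del_x'^{a_1}\del_y^{a_2}\otimes m_k$ has $(y\del_y-x\del_x)$-eigenvalue $n-2(k-1)+a_1-a_2\pmod p$, where $n=\lambda_1-\lambda_2$. For non-exceptional $\lambda$ (Case 3) letting $a_1$ and $k$ vary shows $\ell_{2a_2}=\{0,1,\ldots,p-1\}$ with every weight of multiplicity $n+1$. For Case 2, $L(-1,-1)=Z(-1,-1)/k\bracket{\del_x'^{p-1}\del_y^{p-1}\otimes m}$ kills exactly one weight-$0$ vector in degree $2(p-1)$, so $\ell_{2(p-1)}=\{1,\ldots,p-1\}$ while $\ell_{2a_2}=\{0,\ldots,p-1\}$ for $a_2<p-1$; and using Lemma~\ref{onedimbasis} the \emph{same} weight lists arise for $L(0,-1)=\widehat{H}\bracket{\del_y\otimes m}\subseteq Z(0,0)$, so the two Case 2 modules yield identical restrictions to $W$. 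For Case 1 the module $L(0,0)$ is one-dimensional of weight $0$, concentrated in degree $0$.

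For step (iii), Case 1 immediately gives $L_W(0)$. In Case 3 the uniformity of the $\ell_{2a_2}$ makes the algorithm extract, per iteration, one copy of each $L_W(j)$ with $1\leq j\leq p-2$ together with two copies each of $L_W(0)$ and $L_W(p-1)$; matching against $\dim L(\lambda)=p^2(n+1)$ then forces exactly $n+1$ such iterations. In Case 2 the algorithm must begin at $r=2(p-1)$ with $\mu=1$ (the smallest weight available there), recording a first $L_W(p-1)$; tracking the removals shows that the missing weight-$0$ at the top converts one would-be $L_W(0)$ of the generic pattern into a second $L_W(p-1)$, yielding $\bigoplus_{j=0}^{p-2}L_W(j)\oplus L_W(p-1)^2$. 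The final claim about multiplicities is now immediate: in every case the multiplicity of $L_W(j)$ for $1\leq j\leq p-2$ equals $0$, $1$, or $r+1$, independent of $j$, and by Theorem~\ref{mainresult} every simple restricted $\widehat{H}$-module is some $L(\lambda)$, so the property passes to any $p$-representation of $\widehat{H}$. The main obstacle is step (i): without the corrected vectors $w_{p-1,a_2,k}$ the naive grading fails the Proposition's hypotheses, and step (iii) in Case 2 also needs care because the asymmetry of $\ell_{2(p-1)}$ is precisely what forces a second $L_W(p-1)$ in place of a second $L_W(0)$.
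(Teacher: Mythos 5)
Your proposal is correct and rests on the same engine as the paper --- Herpel--Stewart's grading lemma and weight-removal algorithm --- but you organise the input to that algorithm differently, and your way is arguably cleaner. The paper never grades $L(\lambda)$ all at once: it filters $Z(\lambda)$ by the $\gl_2$-weight vectors, runs the algorithm separately on each subquotient $Z(\lambda)_i$ (spanned by $\del_x'^{a}\del_y^{b}\otimes v_i$ with $a\leq p-2$, which sidesteps the troublesome $\del_x'^{p-1}$ terms), and then handles the residual subquotient of $\del_x'^{p-1}$ terms in a final pass; the count $r+1$ emerges from the number of filtration steps. You instead fix the grading globally by the change of basis $w_{p-1,a_2,k}=\del_x'^{p-1}\del_y^{a_2}\otimes m_k+a_2\,\del_y^{a_2-1}\otimes m_{k+1}$, which (I have checked) does satisfy $\del_y\cdot w_{p-1,a_2,k}=w_{p-1,a_2+1,k}$, vanishes correctly at $a_2=p-1$, and is $h$-homogeneous; this makes every list $\ell_{2a_2}$ equal to $\{0,\dots,p-1\}$ with uniform multiplicity $n+1$, so a single run of the algorithm plus a dimension count gives the answer. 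What your route buys is uniformity (one grading, one run, the multiplicity $r+1$ falls out of the multiplicity of the weight lists); what the paper's route buys is that no basis correction is needed. Two small inaccuracies to fix, neither fatal: the algorithm does not ``have to begin with $\mu=1$'' in Case 2 --- any choice of $\mu$ is permitted and yields the same multiset, so phrase this as exhibiting one valid run; and the weight lists for $L(-1,-1)$ and $L(0,-1)$ are \emph{not} literally the same (the $0$-weight is deleted from the top list $\ell_{2(p-1)}$ in the first case but from the bottom list $\ell_0$ in the second, since the omitted vectors are $\del_x'^{p-1}\del_y^{p-1}\otimes m$ and $1\otimes m$ respectively), although running the algorithm on either configuration does produce the same composition factors, so your conclusion stands.
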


\begin{proof}
The trivial module's restriction is clear. First we deal with the case when the simple restricted $\widehat{H}$-module is equal to the associated Verma module, i.e., when $L(\lambda) = Z(\lambda)$, i.e. when $\lambda$ is not exceptional.

We take a basis for $L_0(\lambda)$ as usual, but we label it so that $v_i$ spans the $i$-th weight space for $h \coloneqq y \del_y - x \del_x$. The strategy will be to perform the algorithm on $W$-sub-modules of $Z(\lambda)$, pass to quotients, and repeat.

Define in general
\[ Z(\lambda)_i = k \bracket{\del_x'^a \del_y^b \otimes v_i : 0 \leq a \leq p-2, 0 \leq b \leq p-1}.\]

Take now $i = r$, where $r = \lambda_1-\lambda_2$. Then $Z(\lambda)_r$ is the first $W$-sub-module of $Z(\lambda)$ we will consider. We grade it thus
\[ Z(\lambda)_r = \bigoplus_{b \in \Z} Z(\lambda)_r(2b),\]
where
\[Z(\lambda)_r(2b) \coloneqq k \bracket{\del_x'^a \del_y^b \otimes v_r : 0 \leq a \leq p-2}.\] This grading satisfies the conditions in Lemma~\ref{lem:grade}. That $Z(\lambda)_r$ is indeed a $W$-module can be checked by using the formula for $\del_y$ found in Equation~(\ref{delyaction}) and that for the action of $L$ found above.

Note that the basis vector $\del_x'^a \del_y^b \otimes v_i$ is a weight vector for $h$ with weight $a -b +i$.

As in the algorithm, let $\ell_i$ be the list of weights with multiplicities of $h$ on $Z(\lambda)_r(i)$. The element $h$ representing $x\del$ has weight $r+1+a$ on the highest graded piece $Z(\lambda)_r(2p-2)$, for $0 \leq a \leq p-2$, so we have weights $\set{0,1, \ldots, p-1} \setminus \set{r}$, and so obtain composition factors $L_W(0),L_W(1), \ldots, L_W(p-1)$ excluding $L_W(r-1)$ if $r \neq 0$ and $L_W(0)$ if $r =0$, remembering here that $r \neq 1$. Now remove the relevant $h$-weights according to part (ii) of the algorithm. 

It is convenient at this point to consider the $r = 0$ case separately, i.e., we have $Z(\lambda)$ of dimension $p^2$. In this case, we have recorded composition factors $L_W(1), L_W(2), \ldots, L_W(p-1)$, so we remove weights $\mu, \mu +1, \ldots, \mu + p -1$ for $\mu = 2, \ldots, p-1$, from $\ell_{2p-2}, \ell_{2p -4}, \ldots, \ell_{0}$, respectively, and remove weights $1, 2, \ldots, p-1$ from $\ell_{2p-2}, \ell_{2p -4}, \ldots, \ell_{2}$, respectively. This leaves $\ell_{2p -2}$ empty. Each of the non-empty $\ell_{i}$ had $p-1$ weights to begin with, and we have removed $p-1$ distinct weights for all $\ell_{i} \neq \ell_{0}$. Thus, only $\ell_{0}$ is non-empty, containing just the weight 0. Therefore we find a copy of $L_W(0)$ and the algorithm stops. Looking at the quotient $Z(\lambda)/ Z(\lambda)_r$, which is $p$-dimensional, we find it to be a $W$-submodule 
\[ k \bracket{\del_x'^{p-1} \del_y^b \otimes v_r : 0 \leq b \leq p-1} + Z(\lambda)_r,\]
which we grade similarly by powers of $\del_y$. The grading satisfies the conditions in Lemma~\ref{lem:grade}, since $\del_y^b \otimes X \cdot v_r = 0$ in the quotient. In the highest graded piece, as above, we have the weight $p-1 -(p-1) +r = 0$, so we remove this $0$-weight from it, and record a composition factor $L_W(0)$. We see that we have the weight $p-1 -(p-2) +r = 1$, so we remove the weight $1$ from $\ell_{2p-4}$, and the weights $2, 3, \ldots, p-1$ as we go down to $\ell_{0}$, leaving all the lists of weights empty, and picking up the composition factor $L_W(p-1)$. So, indeed, 
\[[L(\lambda)|W]=\left[\bigoplus_{j=1}^{p-2}L_W(j)\oplus L_W(0)^2\oplus L_W(p-1)^2\right],\]
where $\lambda_1 - \lambda_2 =r = 0 $, $\lambda$ not exceptional.

We go back to our generic case, $r \neq 0$. Recall that we found  composition factors $L_W(0),L_W(1), \ldots, L_W(p-1)$ excluding $L_W(r-1)$. So, we remove weights $\mu, \mu +1, \ldots, \mu + p -1$ for $\mu = 2, \ldots, p-1$, $\mu \neq r$, from $\ell_{2p-2}, \ell_{2p -4}, \ldots, \ell_{0}$, respectively, and remove weights $1, 2, \ldots, p-1$ from $\ell_{2p-2}, \ell_{2p -4}, \ldots, \ell_{2}$, respectively, and remove a $0$-weight from $\ell_{2p -2}$. This leaves $\ell_{2p -2}$ empty. 

 In the lower graded pieces, each of the non-empty $\ell_{i}$ had $p-1$ weights to begin with, and we have removed $p-2$ distinct weights for all $\ell_{i} \neq \ell_{0}$, and $p-3$ distinct weights for $\ell_{0}$. We see that $\ell_{2p-4}$ 
has only the weight $1$ remaining in it.


Thus, we record a composition factor $L_W(p-1)$, and remove weights $1, 2, \ldots, p-1$ from $\ell_{2p-4}, \ldots, \ell_{0}$. Therefore, we have  removed all the weights up to, but not including, those in $\ell_0$. The only weight remaining in it is a $0$-weight, so we record a composition factor $L_W(0)$, and the algorithm terminates. So far, we have found composition factors
\[\bigoplus_{j=1}^{p-2}L_W(j)\oplus L_W(0)^2\oplus L_W(p-1)^2  \]
not including $L_W(r-1)$.


Before passing to the quotient we deal with the subquotient that will be left at the end, consisting of the $k$-span of the vectors
\[\bracket{\del_x'^{p-1} \del_y^{b} \otimes v_{i} : 0 
\leq b \leq p-1, -r \leq i \leq r}.\]
It is a $W$-module (as the interested reader can verify) and we grade it as usual. It gives us all the following composition factors, each with multiplicity 1:
\[L_W(i-1) \text{ for } i \in \set{-r , -r +2, \ldots, r-2 , r} \setminus {\set{0,1}} \]
and if $r$ is even, we also pick up a copy of $L_W(p-1)$ and $L_W(0)$ at the end of the process.

If $r$ is odd, we also obtain a copy of $L_W(p-1)$ and $L_W(0)$ at the end of the process, omitting some of the details, which the reader can verify, noting that we obtain $r +2$ composition factors in both cases.

Looking at the quotient $Z(\lambda)/ Z(\lambda)_r$, we find a $W$-submodule 
\[ Z(\lambda)_{r-2} \coloneqq k \bracket{\del_x'^{a} \del_y^{b} \otimes v_{r-2} : 0 
\leq a \leq p-2, 0 \leq b \leq p-1} + Z(\lambda)_r,\]
which we grade similarly by powers of $\del_y$. The grading satisfies the conditions in Lemma~\ref{lem:grade}, so we perform the algorithm on it.

The vectors in the highest graded piece have weights 
\[ a + 1 + \paren{r -2}, \]
so $a +r -1$ for $0 \leq a \leq p-2$. Thus we have all weights in the  range $\set{0, 1, \ldots, p-1}$ except for $r-2$. So, we obtain composition factors $L_W(0), \ldots, L_W(p-1)$ excluding $L_W(0)$ if $r =2$, $L_W(p-1)$ if $r=3$, and $L(r-3)$ otherwise. If we are in the latter case, then the argument as above runs, and we obtain composition factors
$\bigoplus_{j=1}^{p-2}L_W(j)\oplus L_W(0)^2\oplus L_W(p-1)^2 $
excluding $L_W(r-3)$.

If $r=2$, then we argue as in the $r= 0$ case, and obtain composition factors $\bigoplus_{j=1}^{p-2}L_W(j)\oplus L_W(0)\oplus L_W(p-1)$.

Now, if $r=3$, we have composition factors $L_W(0), \ldots, L_W(p-2)$. Proceeding as usual, we see that there is a 1-weight remaining in $\ell_{2p-4}$, so we record a $L_W(p-1)$ composition factor and remove weights according to the algorithm, leaving all the lists of weights empty. So we obtain composition factors
$\bigoplus_{j=1}^{p-2}L_W(j)\oplus L_W(0)\oplus L_W(p-1)$ in this case too.


Proceeding to the submodule $Z(\lambda)_{r-4}$, which is defined analogously, it is easy to see that the vectors in the highest graded piece have weights 
\[ a + 1 + \paren{r -4}, \]
so $a +r -3$ for $0 \leq a \leq p-2$. Thus, again, we have all weights in the  range $\set{0, 1, \ldots, p-1}$ except for $r-4$. And again, as above, depending on the value of $r$, one argues three separate cases, obtaining composition factors
\[\bigoplus_{j=1}^{p-2}L_W(j)\oplus L_W(0)\oplus L_W(p-1) \]
if $r = 4, 5$, i.e., when one misses out an $L_W(0)$ or and $L_W(p-1)$ in the first step, and
\[\bigoplus_{j=1}^{p-2}L_W(j)\oplus L_W(0)^2\oplus L_W(p-1)^2 \]
excluding $L_W(r-5)$ in the other cases.

 We perform the same task all the way down to $Z(\lambda)_{-r}$, i.e., we perform it $r+1$ times, with the composition factors as outlined above.

Now, we can put everything together. As $r \neq 0$, we in fact have that $r \geq 2$. As we apply the algorithm repeatedly, we obtain the following composition factors. From $Z(\lambda)_r$ we get:
\[\bigoplus_{j=1}^{p-2}L_W(j)\oplus L_W(0)^2\oplus L_W(p-1)^2,  \]
not including $L_W(r-1)$. 

From $Z(\lambda)_i$, for $i \in \set{-r, -r +2, \ldots, r-2}$ one gets either
\[\bigoplus_{j=1}^{p-2}L_W(j)\oplus L_W(0)\oplus L_W(p-1), \]
if either $i = 0$ or $i = 1$, or
\[\bigoplus_{j=1}^{p-2}L_W(j)\oplus L_W(0)^2\oplus L_W(p-1)^2, \]
excluding $L_W(i-1)$, otherwise. Thus, we miss out
\[L_W(i-1) \text{ for } i \in \set{-r , -r +2, \ldots, r-2 , r} \setminus {\set{0,1}},  \]
which we recover as we saw above from the subquotient consisting of the $\del_x'^{p-1}$ terms. This subquotient gave us in addition a copy of $L_W(0)$ and a copy of $L_W(p-1)$. So, we have shown, as required, that 
for $\lambda$ not exceptional   \[[L(\lambda)|W]=\left[\left(\bigoplus_{j=1}^{p-2}L_W(j)\oplus L_W(0)^2\oplus L_W(p-1)^2\right)^{(r+1)}\right],\]
where $\lambda_1 - \lambda_2 =r $.

Finally, we will deal with the exceptional modules. First we deal with $L(-1,-1) \cong Z(-1,-1) / k\bracket{\del_x'^{p-1} \del_y^{p-1} \otimes m}$. We define the first submodule to study as
\[ M_1 =  k \bracket{\del_x'^{a} \del_y^b \otimes m : 0 \leq a \leq p-2, 0 \leq b \leq p-1} + k\bracket{\del_x'^{p-1} \del_y^{p-1} \otimes m}. \]

Grade this as usual by powers of $\del_y$. This \emph{is} a $W$-submodule, as both $\del_y$ and $L$ preserve the basis, and the grading is as in the lemma. We note that we have already run the algorithm for the same set of weights when we dealt with $L(a,a)$, for $a \neq 0,-1$. We thus get composition factors $L_W(0), L_W(1), \ldots, L_W(p-1)$.

Now we move on to the quotient $M_2 \coloneqq L(-1, -1)/M_1$. We find a $W$-submodule which is in fact the whole quotient, with basis
\[ k\bracket{\del_x'^{p-1} \del_y^{b} \otimes m : 0 \leq b \leq p-2} + M_1. \]

Again, grade this as usual, and everything is as in Lemma~\ref{lem:grade}. Here, we see that the highest graded piece $M_2(2p-4)$ has a single weight $-1 - (p-2) = 1$. Thus, we record a copy of $L_W(p-1)$ and remove weights, removing $1$ from $\ell_{2p-4}$, 2 from $\ell_{2p-6}$ and so on down to $p-1$ from $\ell_0$, remarking that $\ell_{2b} = \set{ -1 -b}$. Thus all the lists of weights are now empty, and the algorithm terminates, and we have confirmed that $[L(-1,-1)|W]=[\bigoplus_{r=0}^{p-2}L_W(r)\oplus L_W(p-1)^{ 2}]$, as required.

Lastly, we turn to $L(0,-1) \cong \widehat{H}\bracket{\del_y \otimes m} \leq Z(0,0)$. Recall that we saw that this has a basis
\[  k \bracket{\del_x'^{a} \del_y^b \otimes m : 0 \leq a,b \leq p-1, (a,b) \neq (0,0)}.\]

We take the following $W$-submodule
\[M_1 \coloneqq  k \bracket{\del_x'^{a} \del_y^b \otimes m : 0 \leq a \leq p-2, 0 \leq b \leq p-1, (a,b) \neq (0,0)},\]
and we grade it as usual. This is indeed a $W$-submodule, as one can check using our formulae. Hence, we can run the algorithm on it. The highest graded piece has weights $\set{ a +1 : 0 \leq a \leq p-2}$. We record composition factors $ L_W(1), \ldots, L_W(p-1)$. As in the $r= 0$ case we have removed $p-1$ weights from $\ell_{2p-2}, \ldots, \ell_2$ and $p-2$ weights from $\ell_{0}$. In this case, however, as the reader can verify $\ell_{0}$  is left empty.


Now, we look at the quotient 
\[ L(0,-1)/M_1 =  k \bracket{\del_x'^{p-1} \del_y^b \otimes m : 0 \leq b \leq p-1} + M_1, \]
and we grade it as usual. Perform the algorithm. In general we have $\ell_{2b } = \set{-1 -b}$. We get a $0$-weight from the highest graded piece, so we record a copy of $L_W(0)$. Then we pick up a $1$-weight from $\ell_{2p-4}$, record a copy of $L_W(p-1)$ and remove weights $1, 2, \ldots, p-1$ from $\ell_{2p-4}, \ldots, \ell_{0}$, terminating the algorithm. Thus, we have verified that $ [L(0,-1)|W]=[\bigoplus_{j=0}^{p-2}L_W(j)\oplus L_W(p-1)^{ 2}]$, as required.
\end{proof}

\begin{rem}
  The proof of Theorem~1.3 in \cite{Stewart} relied on knowledge of the restrictions of restricted modules for $\widehat{H}$ to a subalgebra isomorphic to $W$, in particular on the multiplicities of the composition factors $L(j)$ with $1 \leq j \leq p-2$, which we have confirmed and given a proof for above.
\end{rem}

Premet in \cite{Premet} introduced the notion of a $d$-balanced toral element. We have:

\begin{mydef}
  Let $\lie{g}$ be a restricted Lie algebra. Let $d > 0$ be an integer. A toral element $h \in \lie{g}$ is \emph{$d$-balanced} if 
\[ \dim_k \lie{g}(h, i) = \dim_k \lie{g}(h, j)\]
for all $i, j \in \mathbb{F}_p^{\times}$ and all eigenspaces have $d \, | \, \dim_k \lie{g}(h, i)$ for $i \neq 0$, where $\lie{g}(h,i)$ denotes the $i$-th eigenspace of $\ad h$ acting on $\lie{g}$.
\end{mydef}

Applying this to our setting, we see that the toral element $h \coloneqq y\del_y - x\del_x$ has eigenspaces when it acts on $\widehat{H}$ by $\ad h $ of equal dimension. This is because in the algorithm we used to work out the composition factors of the restriction of $V$ a restricted $\widehat{H}$-module to $W$, recording a composition factor $L_W(\mu)$ corresponded to finding a non-zero vector $v$ with $h \cdot v = (\mu+1) v$, if $\mu \neq 0, p-1$ and $h \cdot v = 0$ if $\mu = 0$, $h \cdot v = v$ if $\mu = p-1$.



\bibliographystyle{plain} 
\bibliography{projectapproval} 
\end{document}